\renewcommand{\epsilon}{\varepsilon}
\renewcommand{\phi}{\varphi}
\numberwithin{equation}{section}
\newtheoremstyle{thmlemcorr}{10pt}{10pt}{\itshape}{}{\bfseries}{.}{10pt}{{\thmname{#1}\thmnumber{ #2}\thmnote{ (#3)}}}
\newtheoremstyle{thmlemcorr*}{10pt}{10pt}{\itshape}{}{\bfseries}{.}\newline{{\thmname{#1}\thmnumber{ #2}\thmnote{ (#3)}}}
\newtheoremstyle{defi}{10pt}{10pt}{\itshape}{}{\bfseries}{.}{10pt}{{\thmname{#1}\thmnumber{ #2}\thmnote{ (#3)}}}
\newtheoremstyle{remexample}{10pt}{10pt}{}{}{\bfseries}{.}{10pt}{{\thmname{#1}\thmnumber{ #2}\thmnote{ (#3)}}}
\newtheoremstyle{ass}{10pt}{10pt}{}{}{\bfseries}{.}{10pt}{{\thmname{#1}\thmnumber{ A#2}\thmnote{ (#3)}}}
\theoremstyle{thmlemcorr}
\newtheorem{theorem}{Theorem}
\numberwithin{theorem}{section}
\newtheorem{lemma}[theorem]{Lemma}
\newtheorem{proposition}[theorem]{Proposition}
\theoremstyle{thmlemcorr*}
\newtheorem{theorem*}{Theorem}
\newtheorem{lemma*}[theorem]{Lemma}
\newtheorem{corollary*}[theorem]{Corollary}
\newtheorem{proposition*}[theorem]{Proposition}
\newtheorem{problem*}[theorem]{Problem}
\newtheorem{conjecture*}[theorem]{Conjecture}
\theoremstyle{defi}
\newtheorem{definition}[theorem]{Definition}
\theoremstyle{remexample}
\newtheorem{remark}[theorem]{Remark}
\newtheorem{example}[theorem]{Example}
\theoremstyle{ass}
\newcommand{\Acal}{\mathcal{A}}
\newcommand{\cA}{\mathcal{A}}
\newcommand{\Bcal}{\mathcal{B}}
\newcommand{\Hcal}{\mathcal{H}}
\newcommand{\Pcal}{\mathcal{P}}
\newcommand{\Qcal}{\mathcal{Q}}
\newcommand{\Scal}{\mathcal{S}}
\newcommand{\Ucal}{\mathcal{U}}
\newcommand{\Vcal}{\mathcal{V}}
\newcommand{\Abb}{\mathbb{A}}
\newcommand{\NN}{\mathbb{N}}
\newcommand{\Pbb}{\mathbb{P}}
\newcommand{\RR}{\mathbb{R}}
\newcommand{\Tbb}{\mathbb{T}}
\newcommand{\TT}{\mathbb{T}}
\newcommand{\T}{\mathbb{T}}
\newcommand{\Zbb}{\mathbb{Z}}
\newcommand{\ZZ}{\mathbb{Z}}
\DeclareMathOperator{\diverg}{div}
\DeclareMathOperator{\curl}{curl}
\DeclareMathOperator{\rank}{rank}
\DeclareMathOperator{\supp}{supp}
\newcommand{\norm}[1]{\|#1\|}
\newcommand{\normb}[1]{\bigl\|#1\bigr\|}
\newcommand{\normB}[1]{\Bigl\|#1\Bigr\|}
\newcommand{\abs}[1]{\left|#1\right|}
\newcommand{\absb}[1]{\bigl|#1\bigr|}
\newcommand{\absB}[1]{\Bigl|#1\Bigr|}
\newcommand{\dd}{\;\mathrm{d}}
\newcommand{\N}{\mathbb{N}}
\newcommand{\R}{\mathbb{R}}
\newcommand{\weakly}{\rightharpoonup}
\newcommand{\To}{\longrightarrow}
\newcommand{\todown}{\downarrow}
\newcommand{\eps}{\epsilon}
\newcommand{\sbullet}{\begin{picture}(1,1)(-0.5,-2)\circle*{2}\end{picture}}
\newcommand{\frarg}{\,\sbullet\,}
\newcommand{\fhom}{f_{\rm hom}^\Acal}
\newcommand{\fhomzero}{f_{\rm hom}^{\Acal_0}}
\newcommand{\fhomprime}{f_{\rm hom}^{\Acal'}}
\newcommand{\alphaweakly}{\stackrel{r3\text{-}s^\alpha}{\rightharpoonup} }
\def\Xint#1{\mathchoice 
{\XXint\displaystyle\textstyle{#1}}%
{\XXint\textstyle\scriptstyle{#1}}%
{\XXint\scriptstyle\scriptscriptstyle{#1}}%
{\XXint\scriptscriptstyle\scriptscriptstyle{#1}}%
\!\int} 
\def\XXint#1#2#3{{\setbox0=\hbox{$#1{#2#3}{\int}$} 
\vcenter{\hbox{$#2#3$}}\kern-.5\wd0}} 
\def\dashint{\,\Xint-}
\title[Heterogeneous thin films]{Heterogeneous thin films: Combining homogenization and dimension reduction with directors}
\author{Carolin Kreisbeck}
\address{Fakult\"at f\"ur Mathematik, Universit\"at Regensburg, 93040 Regensburg, Germany}
\email{Carolin.Kreisbeck@mathematik.uni-regensburg.de}
\author{Stefan Kr\"omer}
\address{Mathematisches Institut, Universit\"at zu K\"oln, 50923 K\"oln, Germany}
\email{skroemer@math.uni-koeln.de}
\begin{document}


 \begin{abstract}
 \vspace{8pt}                               
We analyze the asymptotic behavior of a multiscale problem given by a sequence of integral functionals subject to differential constraints conveyed by a constant-rank operator with two characteristic length scales, namely the film thickness and the period of oscillating microstructures, by means of $\Gamma$-convergence. 
On a technical level, this requires a subtle merging of homogenization tools, such as multiscale convergence methods, with dimension reduction techniques for functionals subject to differential constraints.
One observes that the results depend critically on the relative magnitude between the two scales. Interestingly, this even regards the
fundamental question of locality of the limit model, and, in particular, leads to new findings also in the gradient case.
 \vspace{8pt}

 \noindent\textsc{MSC (2010):} 49J45 (primary); 35E99, 74K15, 74Q05
 
 \noindent\textsc{Keywords:} dimension reduction, homogenization, $\Gamma$-convergence, multiscale problems, PDE constraints, $\Acal$-quasiconvexity, nonlocality.  \vspace{8pt}\end{abstract}
 
\maketitle


\section{Introduction}
Given two characteristic lengths
$\eps>0$ and $\eps^\alpha$ with some fixed power $\alpha>0$, 
we study the asymptotic behavior of functionals of the form
\begin{align}\label{FGradthin} 
	v\mapsto \frac{1}{\eps}\int_{\Omega_\eps} f\Bigl(\frac{x}{\eps^{\alpha}}, V(x)\Bigr)\dd{x},\quad \text{where $V=\nabla v$ with $v:\Omega_\eps\to \R^n$, }
\end{align}
as $\eps \to 0$.

Integrals of this type are standard models for the internal elastic energy of a thin film of hyperelastic, heterogeneous material. In this framework,
$\Omega_\eps:=\omega\times (0,\eps)\subset \R^d$ is the reference configuration of the film, which is thin in one direction (with thickness $\eps$), and the 
explicit dependence of $f$ on $\frac{x}{\eps^{\alpha}}$, which is assumed to be periodic in this variable, represents a material inhomogeneity of length scale $\eps^{\alpha}$. 
The scaling factor preceding the integral is related to the strength of the applied external forces added to the model. 
Rescaling by $\frac{1}{\eps}$, as above, represents the so-called membrane regime, which is suitable for rather strong forces, typically of the order required to stretch the material.

A change of variables allows us to work in a fixed domain $\Omega_1$ instead of $\Omega_\eps$. After the parameter transformation, the energy is given by the functional $F_{\eps,\eps^\alpha}:L^p(\Omega_1;\R^{n\times d})\to [0,\infty]$,
\begin{align}\label{FGrad} 
F_{\eps,\eps^\alpha}(U)= \begin{cases}
\displaystyle\int_{\Omega_1} f\Bigl(\frac{x'}{\eps^{\alpha}},\frac{x_d}{\eps^{\alpha-1}}, U(x)\Bigr)\dd{x} & 
\text{ if $U=\bigl(\nabla' u,\frac{1}{\eps} \partial_d u\bigr)$ for a $u\in W^{1,p}(\Omega_1;\RR^n)$,}\\
\infty & \text{otherwise,}
\end{cases}
\end{align}
where $1<p<\infty$, $x=(x',x_d)\in \omega\times (0,1)=\Omega_1$, and $\nabla'u$ denotes the gradient with respect to $x'$, i.e., the first $d-1$ columns of $\nabla u$.
We use the concept of $\Gamma$-convergence to rigorously derive the limit functional of $F_{\eps, \eps^\alpha}$ as $\eps\to 0$ (see e.g.~\cite{Bra02,DalMa93B}). This limit depends heavily on the notion of convergence used for a sequence of admissible finite-scale states $(U_\eps)$ (or $(u_\eps)$ at the level of potentials), i.e.~$F_{\eps, \eps^\alpha}(U_\eps)<\infty$ for $\eps>0$, giving rise to a limit state $U_0$. The particular choice of convergence determines the nature of admissible limit states and the amount of information they carry. 
Most of the earlier results 
concerning thin-film limits of homogeneous films~\cite{BB07, BFF00} or simultaneous homogenization and dimension reduction~\cite{Shu00, BaBa06} (see also the references therein for a more extensive history) were obtained for weak convergence $u_\eps \rightharpoonup u_0$ in $W^{1,p}(\Omega_1;\R^n)$ (where $U_0=\nabla u_0$), or, essentially equivalent,
for strong convergence $u_\eps \to u_0$ in $L^p(\Omega_1;\R^n)$. Due to the factor $\frac{1}{\eps}$ in front of $\partial_d u$ in~\eqref{FGrad}, which penalizes changes in direction $x_d$, all limit potentials $u_0$ are constant in $x_d$. If $f$ is coercive in a suitable sense, compactness for sequences of finite-scale states with uniformly bounded energy is guaranteed even with respect to a stronger notion of convergence, 
namely,
\begin{align}\label{gradconv}
	U_\eps=\Big(\nabla' u_\eps\Big|\frac{1}{\eps} \partial_d u_\eps\Big)\rightharpoonup U_0=(U_0'|U_{0, d})\quad\text{weakly in $L^p(\Omega_1;\RR^{n\times d})$}.
\end{align}
It is not difficult to see that obtaining finite energy in the limit requires that $U_0'=\nabla'u_0$, i.e., the first $d-1$ columns $U_0'$ of $U_0:\Omega_1\to \RR^{n\times d}$ have to be a gradient of some function $u_0\in W^{1,p}(\Omega_1;\RR^n)$ which is constant in $x_d$. In addition, we retain the information contained in the weak limit $b:=U_{0, d}$ of the $d$th column of $U_0$, commonly called bending moment, director or Cosserat vector. 
Given $u_0$ and $b$, an associated finite-scale approximating sequence can be obtained by setting
\[
	u_\eps(x',x_d):=u_0(x')+\eps \int_0^{x_d} b(x',s)\,ds,\qquad x\in \Omega_1,
\]
even though this choice will usually not give the optimal internal energy, because it disregards possible microstructure favored due to material inhomogeneities.

In this article, we focus on a quite surprising and mathematically challenging effect linked to the Cosserat vector: the possible nonlocal character of the limit functional. 
As we will see, this is essentially rooted in the fact that, unlike the other parts of the limit state, the Cosserat vector can depend on the ``thin'' variable $x_d$.
In the context of pure dimension reduction, i.e., for the thin-film limit of homogeneous material, the appearance of this effect was first conjectured in \cite{BFM09} by Bouchitt\'{e}, Fonseca and Mascarenhas (for other settings where nonlocal limit functionals are observed we refer to~\cite{FoFraLe07,DalMaFoLe10}). In~\cite{BFM09}, nonlocal effects are related to a lack of convexity\footnote{more precisely, cross-quasiconvexity, a weaker variant of convexity} of the energy density, that is, if they appear at all, which is still unknown. In our framework, we are able to prove that the question of locality versus nonlocality of the $\Gamma$-limit depends heavily on the interplay between the two microscales in $F_{\eps,\eps^\alpha}$, even for convex $f$: If the material inhomogeneity is finer than the film thickness ($\eps^\alpha<<\eps$, i.e., $\alpha>1$), then the $\Gamma$-limit with respect to \eqref{gradconv} is always a local integral functional, whereas in the other case ($\alpha\leq 1$), there are convex energy densities $f$ for which the $\Gamma$-limit is nonlocal. It is striking that this phenomenon can even appear in the mathematically ``simple'' convex case. On the other hand, we do exploit convexity in our proof in all cases, which implies that our results do not apply to hyperelastic energy densities that prevent local interpenetration of matter.

The notion of locality used here is essentially the one of \cite{BFM09}, made precise as follows:
\begin{definition}\label{def:local}
We call a functional $F:L^p(\Omega_1;\R^m)\to [0, \infty]$ local, if it can be expressed as an integral with respect to the Lebesgue measure, i.e.,~if there exists a density $g:\Omega_1\times \R^m\to [0,\infty]$ such that  for every $U\in L^p(\Omega_1;\R^m)$ with $F(U)<\infty$, the map $x\mapsto g(x,U(x))$ is measurable
and 
\begin{align}\label{eq:local}
F(U)=\int_{\Omega_1} g(x,U(x))\dd{x}.
\end{align}
If $F$ is not local, we refer to it as a nonlocal functional. 
\end{definition}
The main feature that links Definition~\ref{def:local} to an intuitive concept of locality is the additivity of such functionals considered as a function of their domain, i.e., $D\mapsto F(U;D)$ with $D\subset \Omega_1$. We set $F(U)=F(U;\Omega_1)$. If $F$ is an integral functional as in~\eqref{eq:local} and $U\in L^p(\Omega_1;\R^m)$ is admissible in the sense that $F(U;\Omega_1)<\infty$,
then clearly 
\begin{align*}
	F(U;\Omega_1)=F(U;D)+F(U;\Omega_1\setminus D)
\end{align*}
for any open $D\subset \Omega_1$. 

For our thin-film model, we are looking at the $\Gamma$-$\liminf$ as $\eps\to 0$ as a function of the domain, that is, 
\begin{equation}\label{Gammaliminf}
	F_0^-(U_0;D)=\inf\{ \liminf_{\eps\to 0} F_{\eps,\eps^\alpha}(U_\eps;D): (U_\eps)\subset L^p(\Omega_1;\R^{n\times d}), U_\eps\weakly U_0 \text{ in $L^p(D;\R^{n \times d})$})\}
\end{equation}
where
\[
F_{\eps,\eps^\alpha}(U;D):=
\begin{cases}
\displaystyle\int_{D} f\Bigl(\frac{x'}{\eps^{\alpha}},\frac{x_d}{\eps^{\alpha-1}}, U(x)\Bigr)\dd{x} & 
\text{ if $U=\bigl(\nabla' u,\frac{1}{\eps} \partial_d u\bigr)$ for a $u\in W^{1,p}(\Omega_1;\RR^n)$,}\\
\infty & \text{otherwise.}
\end{cases}
\]
In view of the heterogeneous character of the material, the optimal energy of an ``effective'' macroscopic state $U_0$ in the limit as $\eps\to 0$ is 
typically achieved along a sequence of finite-scale states $(U_\eps)$ that develop a suitable microstructure on the length scale $\eps^\alpha$ on top of $U_0$. The additivity of the limit functional~\eqref{Gammaliminf} then means that among the various different microstructures that are locally optimal in two (or more) pieces of the film, for instance $D$ and $\Omega_1\setminus \bar{D}$, we can always choose a pair that can be combined without additional energetic cost for a transition layer,
at least as long $\partial D$ has measure zero. If, on the other hand, additivity fails, this means that locally optimal microstructures for finite $\eps$ may be incompatible with the required gradient structure at $\Omega_1\cap \partial D$. As a consequence the optimal microstructure on the full domain $\Omega_1$ may take a different form, most likely involving a large transition layer with non-negligible energetic cost that is not completely determined by the local properties of $U_0$ and $f$ alone. 

\begin{remark}
a) Definition~\ref{def:local} could be generalized by also allowing $g$ to depend on derivatives of $U$ or other derived local quantities. Indeed, this can easily be encoded by replacing the admissible states $U$ with, say, $\tilde{U}=(U,\nabla U)$, by setting $F(\tilde{U})=\infty$ on fields lacking this structure.

b) From the point of view outlined above, it might seem more natural to define ``local'' in a slightly more general way, by allowing for a different measure in the integral representation of $F$ in Definition~\ref{def:local}, not necessarily absolutely continuous with respect to the Lebesgue measure. However, the limit functional $F_0^-$ in our setting cannot have this form. This is essentially a consequence of the decomposition lemma for dimension reduction (see~\cite[Theorem~1.1]{BoFo02} or \cite[Theorem~3.1]{BZ07}), which proves that any optimal or nearly optimal sequence of states $(U_\eps)$ with  $U_\eps=(\nabla' u_\eps,\frac{1}{\eps} \partial_d u_\eps)$ in the definition of $F_0^-$ that charges a set of Lebesgue measure zero 
can always be replaced by an equiintegrable sequence with the same or less energy in the limit. 

c) In \cite[Definition~15.21]{DalMa93B}, 
$F_0^-$ is called local if for every admissible $D\subset \Omega_1$,
\begin{align}\label{local2}
	F_0^-(V;D)=F_0^-(U;D) \quad\text{for all $U,V$ such that $U=V$ a.e.~on $D$},
\end{align}
which has nothing to do with Definition~\ref{def:local} or additivity as a set function. Actually, \eqref{local2} is trivially true for
$F_0^-$ as defined in \eqref{Gammaliminf}: Any nearly optimal sequence $U_\eps=(\nabla' u_\eps|\frac{1}{\eps}\partial_d u_\eps)\rightharpoonup U$ in $L^p(D; \R^{n\times d})$ such that $\liminf_{\eps\to 0} F_{\eps,\eps^\alpha}(U_\eps;D)=F_0^-(U;D)$ up to an arbitrarily small error
is also admissible in the infimum in the definition of $F_0^-(V;D)$,
because $U_\eps\rightharpoonup U=V$ on $D$, whence $F_0^-(U;D)\geq F_0^-(V;D)$. The converse inequality is analogous. 
In our opinion this only shows that \eqref{local2} does not have a deeper meaning for limit functionals like $F_0^-$.
\end{remark}

Now assume that $\omega\subset \R^{d-1}$ is a simply connected, bounded Lipschitz domain.
For functionals of the form \eqref{FGrad}, our two main results, Theorem~\ref{theo:simultaneous_local} and Theorem~\ref{theo:simultaneous_nonlocal},
then read as follows (with the hypotheses (H0)--(H5) on $f$ introduced in Section~\ref{sec:pre}):

\begin{theorem}[Local limit for \boldmath{$\alpha>1$}]
Let $\alpha>1$ and assume that $f:\R^{d}\times \R^{n\times d}(\cong\R^m)\to \R$ satisfies (H0)--(H5).
Then, as $\eps\todown 0$, the sequence of functionals $(F_{\eps, \eps^\alpha})$ as defined in \eqref{FGrad}, 
$\Gamma$-converges with respect to weak convergence in $L^p$ as in 
\eqref{gradconv} to the limit functional $F_0: L^p(\Omega_1;\R^{n\times d})\to [0,\infty]$ defined by
\begin{align*}
F_0(U)= \begin{cases}
\displaystyle\int_{\Omega_1} f_{\rm hom}\bigl(U(x)\bigr)\dd{x} & 
\text{if $U'=\nabla' v$ for $v\in W^{1,p}(\omega;\RR^n)$,}\\
\infty & \text{otherwise,}
\end{cases}
\end{align*} 
for $U\in L^p(\Omega_1;\R^{n\times d})$. The homogenized energy density $f_{\rm hom}:\R^{n\times d}\to[0,\infty)$ is given by
\begin{equation*}
\begin{aligned}
f_{\rm hom}(\xi) &=\inf_{v\in W^{1,p}(\Tbb^d;\RR^n)} \dashint_{(0,1)^d} f\bigl(y, \xi + \nabla v(y)\bigr)\dd{y}, \qquad \xi\in \R^{n\times d}.
\end{aligned}
\end{equation*}
\end{theorem}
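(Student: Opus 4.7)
The plan is to prove the two standard $\Gamma$-convergence estimates with respect to the weak $L^p$-convergence \eqref{gradconv}, namely a liminf inequality and the construction of a recovery sequence. As a preliminary, I would pin down the structure of admissible limit fields: if $\sup_\eps F_{\eps,\eps^\alpha}(U_\eps)<\infty$, then the $p$-growth in (H0)--(H5) renders $(U_\eps)$ bounded in $L^p$, and the identity $\nabla u_\eps=(U_\eps',\eps U_{\eps,d})$ shows $\nabla u_\eps$ bounded in $L^p$ with $\partial_d u_\eps\to 0$ strongly. After subtracting suitable constants, $u_\eps\weakly v$ in $W^{1,p}$ for some $v\in W^{1,p}(\omega;\R^n)$ independent of $x_d$, so $U_0'=\nabla'v$, as required.

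\emph{Liminf.} Since $\alpha>1$, both $\eps^\alpha$ and $\eps^{\alpha-1}$ tend to zero, and the natural tool is two-scale convergence in the fast variable $y=(x'/\eps^\alpha,\,x_d/\eps^{\alpha-1})\in\T^d$. Along a subsequence I would extract a two-scale limit $\widehat U(x,y)$ of $(U_\eps)$, whose macroscopic average is $U_0(x)$. The crucial structural claim is
\begin{equation*}
\widehat U(x,y)=U_0(x)+\nabla_y w(x,y)\qquad\text{for some }w\in L^p(\Omega_1;W^{1,p}(\T^d;\R^n)),
\end{equation*}
obtained by passing to the two-scale limit in the rescaled curl relation $\Curl(U_\eps',\eps U_{\eps,d})=0$ against oscillating test functions: the leading-order terms force $\Curl_y\widehat U=0$ on the full torus, so $\widehat U-U_0$ is a $y$-gradient of zero mean. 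Convexity of $f$ in its last variable together with lower semicontinuity of convex two-scale integrands (or, equivalently, a fibrewise Jensen inequality followed by the cell formula) then yields
\begin{equation*}
\liminf_{\eps\to 0}F_{\eps,\eps^\alpha}(U_\eps)\ge\int_{\Omega_1}\dashint_{(0,1)^d}f\bigl(y,U_0(x)+\nabla_y w(x,y)\bigr)\dd y\dd x\ge\int_{\Omega_1}f_{\mathrm{hom}}(U_0(x))\dd x.
\end{equation*}

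\emph{Recovery sequence.} For $U_0\equiv\xi$ constant on a subdomain, pick an almost optimiser $w_\xi\in W^{1,p}(\T^d;\R^n)$ for $f_{\mathrm{hom}}(\xi)$ and set
\begin{equation*}
u_\eps(x):=\xi'\cdot x'+\eps\,\xi_d\cdot x_d+\eps^\alpha w_\xi\bigl(\tfrac{x'}{\eps^\alpha},\tfrac{x_d}{\eps^{\alpha-1}}\bigr).
\end{equation*}
A direct chain-rule computation gives $U_\eps(x):=(\nabla'u_\eps,\tfrac{1}{\eps}\partial_d u_\eps)(x)=\xi+\nabla w_\xi(x'/\eps^\alpha,x_d/\eps^{\alpha-1})\weakly\xi$, and since both fast scales vanish, the anisotropic Riemann--Lebesgue lemma yields $F_{\eps,\eps^\alpha}(U_\eps)\to|\Omega_1|\dashint_{(0,1)^d}f(y,\xi+\nabla w_\xi(y))\dd y$, which is arbitrarily close to $|\Omega_1|f_{\mathrm{hom}}(\xi)$ by the choice of $w_\xi$. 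A general admissible $U_0=(\nabla'v,b)$ is handled by approximating $v$ by piecewise affine and $b$ by piecewise constant functions on a partition of $\Omega_1$, applying the constant-$\xi$ construction cell by cell, pasting the local potentials via cut-offs, and extracting a diagonal subsequence; the continuity and $p$-growth of $f_{\mathrm{hom}}$ inherited from (H0)--(H5) control the transition layers and the passage to the limit.

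\emph{Main obstacle.} The only genuinely non-routine step is the structural identification of $\widehat U$ as $U_0+\nabla_y w$ with $w$ depending on the full $y\in\T^d$ rather than only on $y'\in\T^{d-1}$. This is where the condition $\alpha>1$ is decisive: precisely then the microstructure fits inside the film thickness with many oscillation periods, so the $y_d$-direction contributes non-trivially to the cell problem and produces the standard $d$-dimensional homogenised density. In the borderline or opposite regime $\alpha\le 1$, the oscillation in $x_d/\eps^{\alpha-1}$ is at most of order one, the corrector loses genuine $y_d$-dependence, and the limit takes a different---potentially nonlocal---form consistent with Theorem~\ref{theo:simultaneous_nonlocal}.
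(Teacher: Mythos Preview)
Your liminf is exactly the paper's argument (Proposition~\ref{prop:lowerbound}): its ``reduced weak three-scale convergence'' is your two-scale limit in the fast variable $y=(x'/\eps^\alpha,x_d/\eps^{\alpha-1})$, and the conclusion $\Acal^y w(x,\cdot)=0$ on $\T^d$ in Lemma~\ref{lem:char_threescale} specialises to your $\Curl_y\widehat U=0$.

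For the limsup the routes differ. The paper proves the upper bound in the general $\Acal$-free setting (Proposition~\ref{prop:upperboundA}), where no potentials are available; its key device is Lemma~\ref{lem:localize}, which shows that a sequence oscillating with period $\tau_j=\eps_j^\alpha$ in $x'$ can be cut off in $x_d$ with $\Acal_{\eps_j}$-error vanishing in $W^{-1,p}$ precisely because $\tau_j/\eps_j\to 0$ (i.e.\ $\alpha>1$), after which one projects back onto $\Acal_{\eps_j}$-free fields via Lemma~\ref{theorem_projection}. Convexity (H5) enters there to replace the local recovery sequences by their $\eps_j^\alpha$-periodised averages without increasing the energy. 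Your potential-level shortcut is legitimate in the gradient case and bypasses both Lemma~\ref{lem:localize} and the projection: the corrector $\eps^\alpha w_\xi$ is $o(\eps)$, so applying $\tfrac{1}{\eps}\partial_d$ to a cut-off corrector yields a term of order $\eps^{\alpha-1}\to 0$. This is the limsup incarnation of $\alpha>1$, and it is at least as delicate as the liminf structural identification you single out as the main obstacle. One detail you skate over: the cut-offs must act only on the oscillating correctors, while the macroscopic part is carried by a single global potential $v(x')+\eps\int_0^{x_d} b(x',s)\,ds$; since for generic $b\in L^p$ this expression need not lie in $W^{1,p}(\Omega_1;\R^n)$, a preliminary density reduction to smooth $b$ (using the $p$-Lipschitz continuity of $f_{\rm hom}$) is required before your piecewise construction can start.
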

\begin{theorem}[Nonlocal example for \boldmath{$\alpha\leq 1$}]\label{theo:nonlocalGrad}
Let $\alpha\leq 1$, $d\geq 2$, $\omega=(0,1)^{d-1}$, and $\eps_j\todown 0$ be the sequence with $\eps_j^\alpha=\frac{1}{j}$. 
Then there exists $f:\R^{d}\times \R^{n\times d} \to \R$ satisfying (H0)-(H5) such that $F_0^-$ as defined in \eqref{Gammaliminf} is nonlocal.
\end{theorem}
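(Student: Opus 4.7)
\emph{Plan.} The idea is to exhibit, for each $\alpha\leq 1$, an explicit quadratic density $f$ together with an admissible limit state $U_0$ and a Lipschitz splitting $\Omega_1=D\cup(\Omega_1\setminus D)$ such that
\[
F_0^-(U_0;\Omega_1) \;>\; F_0^-(U_0;D) + F_0^-(U_0;\Omega_1\setminus D).
\]
Since every integral functional of the form of Definition~\ref{def:local} is additive in this sense (cf.\ the remark following that definition), strict super-additivity rules out such a representation and thus forces nonlocality of $F_0^-$. It is enough to construct the example in the minimal setting $d=2$, $n=1$, $\omega=(0,1)$; the general case $d\geq 2$, $n\geq 1$ follows by embedding and padding $f$ with quadratic penalty terms that freeze the additional components.

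\emph{Construction and cell representation.} Pick a smooth, $1$-periodic, \emph{non-constant} function $c:\TT\to[c_{\min},c_{\max}]$ with $c_{\min}>0$ and a parameter $\delta>0$, and set
\[
f(y,\xi) \;:=\; (c(y_1)+\delta)\,\xi_1^2 \;+\; 2\sqrt{c(y_1)}\,\xi_1\xi_2 \;+\; \xi_2^2.
\]
Its $\xi$-Hessian has constant determinant $4\delta>0$ and uniformly bounded trace, so $f$ is strongly convex with quadratic ($p=2$) growth above and below, and (H0)--(H5) reduce to routine measurability and periodicity checks. Admissible limit states take the form $U_0=(u_0'(x_1),b(x_1,x_2))$ with $u_0\in W^{1,2}(\omega)$ and $b\in L^2(\Omega_1)$. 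Using the multi-scale ansatz $u_\eps(x)=u_0(x_1)+\eps\,w(x)+\eps^\alpha\,\phi(x_1,x_1/\eps^\alpha,x_2)$ and two-scale compactness adapted to the combined thin-film/homogenization scaling, I expect to prove that, for every Lipschitz $D=\omega\times D_2\subset\Omega_1$,
\[
F_0^-(U_0;D) \;=\; \int_\omega G^{D}\!\bigl(u_0'(x_1),b(x_1,\cdot)\bigr)\,dx_1,
\]
with cell density $G^{D}(\xi,\beta)$ defined as the infimum over admissible correctors $\phi$ on $\TT\times D_2$ of $\int_{D_2}\!\int_0^1 f(y_1,\xi+\partial_{y_1}\phi,\beta(x_2)+\partial_{x_2}\phi)\,dy_1\,dx_2$. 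Crucially, the regime $\alpha\leq 1$ dictates which correctors are admissible: for $\alpha<1$, the bound on $\tfrac{1}{\eps}\partial_d u_\eps$ forces $\phi=\phi(y_1)$ to be $x_2$-independent (otherwise the Cosserat component diverges), reducing the cell problem to a $1$-D linear ODE; for $\alpha=1$, the corrector retains its full $(y_1,x_2)$-dependence and solves a $2$-D Neumann problem on $\TT\times D_2$.

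\emph{Non-additivity.} Because $f$ is a perfect quadratic, the cell minimization is solvable in closed form. For $\alpha<1$, with $I_k:=\int_0^1 c(y_1)^{k/2}/(c(y_1)+\delta)\,dy_1$ for $k=0,1,2$ and $\bar\beta_{D_2}:=\int_{D_2}\beta$, $S_{D_2}:=\int_{D_2}\beta^2$, integrating the Euler--Lagrange equation for the optimal $\phi(y_1)$ yields
\[
G^{D}(\xi,\beta) \;=\; \frac{|D_2|\,\xi^2}{I_0} \;+\; \frac{2\xi\,\bar\beta_{D_2}\,I_1}{I_0} \;+\; \frac{\bar\beta_{D_2}^{\,2}}{|D_2|}\!\left(\frac{I_1^{\,2}}{I_0}-I_2\right) \;+\; S_{D_2},
\]
and a short algebraic check then produces the key identity
\[
G^{\Omega_1}(\xi,\beta)-G^{D}(\xi,\beta|_{D_2})-G^{D^c}(\xi,\beta|_{D_2^c}) \;=\; \Bigl(I_2-\tfrac{I_1^{\,2}}{I_0}\Bigr)\bigl(\bar\beta_{D_2}-\bar\beta_{D_2^c}\bigr)^2.
\]
Cauchy--Schwarz gives $I_2-I_1^{\,2}/I_0\geq 0$, with strictness \emph{precisely} because $c$ is non-constant. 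Choosing $D=\omega\times(0,1/2)$, $u_0\equiv 0$, and $b(x_1,x_2):=\ONE_{(0,1/2)}(x_2)$ makes $\bar\beta_{D_2}-\bar\beta_{D_2^c}=\tfrac12$, and integration over $x_1\in\omega$ produces the strict inequality asserted at the outset.

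\emph{The case $\alpha=1$ and the main obstacle.} For $\alpha=1$, the cell problem becomes a genuine $2$-D Neumann problem on $\TT\times D_2$; a Fourier decomposition in $y_1$ diagonalizes it into a family of one-dimensional second-order ODEs in $x_2$, and the joint cell problem on $\TT\times(0,1)$ imposes an internal continuity constraint on the corrector across $x_2=1/2$ that is absent in the decoupled problems on $\TT\times(0,1/2)$ and $\TT\times(1/2,1)$, yielding strict super-additivity by exactly the same mechanism. The principal obstacle of the proof is the first step: producing matching upper and lower bounds for $F_0^-(U_0;D)$ requires a careful two-scale argument in the presence of both scales $\eps$ and $\eps^\alpha$, and the admissible correctors (and hence the structure of the recovery sequences) differ substantively between the sub-regimes $\alpha<1$ and $\alpha=1$. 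Once the cell representation is in hand, the actual non-additivity is a short quadratic/Cauchy--Schwarz computation.
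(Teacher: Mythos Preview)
Your approach is genuinely different from the paper's, but the heart of your argument---the cell representation
\[
F_0^-(U_0;D)=\int_\omega G^D\bigl(u_0'(x_1),b(x_1,\cdot)\bigr)\,dx_1
\]
with the corrector $\phi$ forced to depend on $y_1$ only---is not established, and in fact the lower bound you need does not follow from the reasoning you give. Your observation that $\phi=\phi(y_1)$ is the only admissible corrector \emph{in the ansatz} $u_\eps=u_0+\eps w(x)+\eps^\alpha\phi$ is correct, but this does not constrain arbitrary competitor sequences. If instead one takes $u_\eps=u_0(x_1)+\eps\,w(x_1,x_2,x_1/\eps^\alpha)+\eps^\alpha\phi(x_1/\eps^\alpha)$ with $w$ depending on the fast variable, then for $\alpha<1$ one still has $U_{\eps,1}\to u_0'+\phi'(y_1)$ two-scale (the extra term is $\eps^{1-\alpha}\partial_{y_1}w\to 0$), but now $U_{\eps,2}=\partial_{x_2}w$ two-scale converges to $\partial_{x_2}w(x_1,x_2,y_1)$, which \emph{can depend on $y_1$}. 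For your quadratic $f$ this extra freedom strictly lowers the infimum (minimizing over $w_2(x_2,y_1)$ with prescribed $y_1$-mean $b(x_2)$ beats the choice $w_2=b$), so your formula for $G^D$ is too large. The two-scale liminf inequality, which is the only general lower bound available here (cf.\ Remark~\ref{rem:upperbound_twoscale}), gives a strictly smaller expression.

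The paper sidesteps this difficulty entirely. Rather than computing $F_0^-$, it chooses $f$ to be the convexification of a two-phase density whose zero set on each half of the periodicity cell is a segment $[\xi_i,\sigma_i]$, with the four vectors chosen (Lemma~\ref{lem:general_counterexample}) so that $\xi_1-\xi_2,\sigma_1-\sigma_2\in\ker\Abb(n)$ but $\xi_i-\sigma_i\notin\ker\Abb(e_d)$. On each half-film $\omega\times(0,\tfrac12)$ and $\omega\times(\tfrac12,1)$ one then builds recovery sequences with \emph{zero} energy by laminating between the wells; on the full film, a Young-measure rigidity argument (Step~3 of Proposition~\ref{prop:nonlocal_alpha<1}) shows that any sequence with vanishing energy must, after averaging and rescaling, converge strongly to a piecewise constant field whose jump pattern violates the differential constraint, forcing $F_0^-(u_0;\Omega_1)>0$. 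No explicit formula for $F_0^-$ is ever needed. Your quadratic example may well be repairable---even the corrected cell formula appears to be non-additive---but as written, the key lower bound is a genuine gap, and closing it is precisely the hard step the paper's construction is designed to avoid.
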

\begin{remark}
The proof of Theorem~\ref{theo:simultaneous_nonlocal}
in Section~\ref{sec:alpha<1}, which provides a more general version of Theorem~\ref{theo:nonlocalGrad}, translated to the setting of \eqref{FGrad}, shows the following (see also Example~\ref{ex:nonlocaldivcurl}~b)):
For an appropriate choice of $f$, there exists $U_0=(U'_0|b)\in L^p(\Omega_1;\RR^{n\times d})$ such that $U'_0$ is constant on $\Omega_1$, $b$ is piecewise constant with a single jump across the surface $x_d=\frac{1}{2}$ and
\[
	F_0^-(U_0;D_1)=F_0^-(U_0;D_2)=0<F_0^-(U_0;\Omega_1)<\infty,
\]
where $D_1=\omega\times (0,\frac{1}{2})$, $D_2=\omega\times (\frac{1}{2},1)$.
\end{remark}
So far, we have discussed functionals whose admissible states are given as a gradient (or rescaled gradient) of some potential. For the rest of this article, we will work in a more general framework of linear differential constraints, referred to as the $\Acal$-free framework, made precise in Section~\ref{sec:pre}. This means that in \eqref{FGradthin}, the constraint $V=\nabla v$ is replaced by $\cA V=0$ in $\Omega_\eps$, where $\cA$ is a vectorial first-order differential operator with constant coefficients. By choosing the operator $\cA$ as the curl in $\RR^d$ applied row by row to functions with
values in $\RR^{n\times d}$, the constraint of \eqref{FGradthin} fits exactly into this abstract setting. Indeed,
\begin{align*} 
	V=\nabla v \quad\text{ for some $v\in W^{1,p}(\Omega_\eps;\R^n)$,}
\end{align*} 
if and only if
\begin{align*}
 (\cA V)_{ij}:=(\curl V)_{ij}=\partial_i V_j-\partial_j V_i=0\in \RR^n  \text{ on $\Omega_\eps$, for $1\leq i<j\leq d$,}
\end{align*}
where $V_j$ denotes the $j$th column of $V:\Omega^1\to  \RR^{n\times d}\cong \R^m$. 
On the rescaled domain $\Omega_1$ (compare~\eqref{FGrad}), the constraint $U=(U'|U_d)=(\nabla' u|\frac{1}{\eps} \partial_d u)$ becomes $\cA_\eps U=\curl_\eps U=0$ on $\Omega_1$, where
\[
	(\curl_\eps U)_{ij} := \left\{\begin{alignedat}{2}
	&\left(\partial_i V_j-\partial_j V_i \right)\quad&&\text{for $1\leq i<j<d$, }\\
	&\left(\partial_i V_d-\tfrac{1}{\eps}\partial_d V_i \right)\quad&&\text{for $1\leq i<j=d$.}
	\end{alignedat}\right.
\]
An overview of our main results in the general framework can be found in Section~\ref{sec:regimes&results}. The corresponding proofs are given in the remaining sections.

\section{Preliminaries}\label{sec:pre}

\subsection{Notation}
The partial derivative of a function $u=u(x)$ with respect to the $k$th component is denoted by $\partial_k u$.  For clarity, we sometimes add the respective variable as a superscript, like $\partial_k^x u$, or $\Acal^x u$ for a differential operator $\Acal$.
By $\T^d$ we denote the $d$-torus, which results from gluing opposite edges of the unit 
square $Q^d:=(0,1)^d$. More generally, one can define for an arbitrary cuboid $Q\subset \R^d$ the corresponding torus $\T^d(Q)$. The vectors $e_1, \ldots, e_d$ constitute the standard unit basis in $\R^d$, and we use the notation $y=(y', y_d)\in \R^d$, where $y'=(y_1, y_2, \ldots, y_{d-1})$.
For $\alpha\in \R$, the number $\lceil \alpha \rceil := \min\{n\in \Zbb: \alpha<n\}$ is the smallest integer following $\alpha\in \R$, and $p':=\frac{p}{p-1}$ denotes the dual exponent of $1<p<\infty$.  
When speaking of sequences with index $\eps>0$, we mean that $\eps$ can stand for any sequence $\eps_j\todown 0$ as $j\to \infty$. Throughout the paper, constants can change from line to line.

\subsection{Problem formulation in the $\Acal$-free framework}\label{sec:formulation_Afree} 
The motivation for working in a quite general mathematical setting which, in particular, covers the problem highlighted in the introduction, is twofold.
On the one hand, we intend to cover different fields of applications for thin films with grain structure or layers such as elasticity, micromagnetics or magnetostriction. The $\Acal$-free framework is a way to treat various variational problems in mechanics and electromagnetism, as well as combinations thereof, in a unified way. 
On the other hand, the proof of the nonlocal behavior in the example given in Section~\ref{sec:alpha<1} naturally leads to this setting even in the gradient case outlined in the introduction: In that case, we actually show that certain vector fields are far away from the subspace of gradient fields by measuring their curl in the norm of $W^{-1,p}$, an argument that cannot be replicated by just using potentials.

The general approach used in this paper operates on states that satisfy a PDE constraint conveyed by a first order differential operator $\Acal$, and is rooted in the theory of compensated compactness developed by Murat and Tartar~\cite{Mur78, Mur81, Tar79}. Building on work by Dacorogna~\cite{Dac82}, Fonseca and M\"uller \cite{FM99} established the theory for variational principles where the $\Acal$-free vector fields are exactly the admissible states. Important examples include integral functionals defined on deformation gradients (using $\cA=\curl$ on a simply connected domain) 
as they appear in hyperelasticity theory, as well as variational problems on solenoidal vector fields ($\cA=\diverg$) or on solutions of the Maxwell equations. 
In the following, we merge the two processes of homogenization and dimension reduction in the $\Acal$-free framework to rigorously derive 
effective and reduced limit models for heterogeneous thin films.

Let $\Omega_\eps=\omega\times (0,\eps)\subset \RR^d$ with space dimension $d>1$ and $\omega\subset Q^{d-1}$ a bounded
Lipschitz domain be the reference configuration of a thin film with thickness $\eps>0$.
The parameter $\delta>0$ stands for the length scale of the material heterogeneity in form of a periodic grain structure.

We study the asymptotics of the constrained minimization problem
\begin{align}\label{vp_beforerescaling}
\frac{1}{\eps}\int_{\Omega_\eps} f\Bigl(\frac{y}{\delta}, v(y)\Bigr)\dd{y} \to \min,\qquad  \text{$v:\Omega_\eps\to \R^m$ with $\Acal v=0$ in $\Omega_\eps$,}
\end{align}
for various regimes as $\eps$ and $\delta$ tend to zero.

Here, $f:\R^{d}\times \R^m\to [0,\infty)$ and $\Acal$ is a linear first-order constant-coefficient partial 
differential operator with symbol
\begin{align*}
\Abb(\eta)= \sum_{k=1}^d A^{(k)}\eta_k\qquad \text{for $\eta\in \R^d$ with matrices $A^{(1)}, \ldots, A^{(d)}\in \R^{l\times m}$.}
\end{align*}

For given $1<p<\infty$ the hypotheses on $f$ are listed below:
\begin{itemize}
 \item[(H0)] {\it Regularity}\\
             $f:\R^{d}\times\R^m\to [0,\infty)$ is Caratheodory, i.e.\ $f(\frarg, \xi)$ is measurable for every 
             $\xi\in \R^m$ and $f(z,\frarg)$ is continuous for almost every $z\in \R^{d}$;
  \item[(H1)] {\it Higher regularity}\\
 $ \partial_\xi f(z, \xi)$ exists for almost all $z\in \R^{d}$ and all $\xi\in \R^m$, and satisfies (H0);
 \item[(H2)] {\it Periodicity}\\
             $f(\frarg, \xi)$ is $Q^{d}$-periodic for every $\xi \in \R^m$;
 \item[(H3)] {\it Growth }\\
             $0 \leq f(z,\xi)\leq c_1(1+\abs{\xi}^p)$ for every $(z,\xi)\in \R^{d}\times\R^m$ and a constant $c_1>0$;
 \item[(H4)] {\it Coercivity}\\
    $f(z,\xi)\geq  c_2\abs{\xi}^p-c_3$ for every $(z,\xi)\in \R^{d}\times\R^m$ with constants $c_2> 0$ and $c_3\in \R$;
 \item[(H5)]{\it Convexity}\\
  $f(z, \frarg)$ is convex for almost every $z\in \R^{d}$.
\end{itemize}

\begin{remark}\label{rem:ucont}
As a consequence of (H3) and (H5), $f$ is $p$-Lipschitz, i.e.,
\[
	\abs{f(z,\xi)-f(z,\mu)}\leq c_4 (\abs{\xi}+\abs{\mu})^{p-1} \abs{\xi-\mu} \qquad \text{for all $\xi, \mu\in \R^m$ and almost all $z\in \R^{d}$}
\]
with a constant $c_4\geq 0$.
By H\"older's inequality, this implies that
\[
	u\mapsto f\Big(\frac{\cdot}{\delta},u(\cdot)\Big),\quad L^p(\Omega_1;\R^m)\to L^1(\Omega_1),
\]
is uniformly continuous on bounded subsets of $L^p(\Omega_1;\R^m)$, also uniformly in $\delta\in (0,1]$.
\end{remark}

To transform the variational principle~\eqref{vp_beforerescaling} into one on the fixed domain $\Omega_1$, 
we apply the classical thin-film rescaling. The change of variables $y=(y',y_d)=(x',\eps x_d)$ with $u(x):=v(y)=v(x',\frac{x_d}{\eps})$ 
leads us to considering the family of functionals $(F_{\eps, \delta})$ given by
\begin{align}\label{Fepsdelta}
F_{\eps, \delta} (u)= \begin{cases}
\displaystyle\int_{\Omega_1} f\Bigl(\frac{x'}{\delta}, \frac{\eps x_d}{\delta}, u(x)\Bigr)\dd{x}, 
& \text{if $\Acal_\eps u =0$ in $\Omega_1$,}\\
\infty, & \text{otherwise,}
\end{cases}\qquad u\in L^p(\Omega_1;\R^m).
\end{align}
The rescaled differential operator becomes parameter-dependent and is  defined as
\begin{equation*}
\Acal_\eps= \Acal' + \frac{1}{\eps}A^{(d)}\partial_d \qquad\text{with $\Acal':=\sum_{k=1}^{d-1} A^{(k)} \partial_k$} 
\end{equation*}
for $\eps>0$. 

Let us remark that any linear partial differential operator of first order
\begin{align*}
\Bcal=\sum_{k=1}^d B^{(k)}\partial_k \qquad \text{with given matrices $B^{(1)}, \ldots, B^{(d)}\in \R^{l\times m}$}
\end{align*}  
can be interpreted as a bounded linear operator $\Bcal: L^p(\Omega_1;\R^m)\to W^{-1,p}(\Omega_1;\R^l)$ by
\begin{align*}
(\Bcal u)[v]=-\int_{\Omega_1}u\cdot \Bcal^T v \dd{x}, \qquad u\in L^p(\Omega_1;\R^m),\, v\in W_0^{1,p'}(\Omega_1;\R^l).
\end{align*}
Here, $\Bcal^T=\sum_{k=1}^d(B^{(k)})^T\partial_k$.
The differential constraint $\Bcal u=0$ in $\Omega_1$ 
(or $u\in \ker_{\Omega_1}\Bcal$) is understood 
in the sense of distributions, i.e.
\begin{align*} 
-\int_{\Omega_1}u\cdot \Bcal^T \varphi \dd{x}=0\qquad \text{for all $\varphi\in C^\infty_c(\Omega_1;\R^l)$,}
\end{align*}and we denote $\Ucal_\Bcal:=\{u\in L^p(\Omega_1;\R^m): u\in \ker_{\Omega_1}\Bcal\}$.

Throughout the paper, we often work with functions on the $d$-torus $\T^d$, or, more general, on $\T^d(Q)$ for a cuboid $Q\subset \R^d$. 
Since $L^p(\T^d(Q);\R^m)=L^p(Q;\R^m)$, we always use the shorter notation $L^p(Q;\R^m)$, assuming implicitly the identification of each function with
its periodic extension to $\R^d$.
By $\Bcal u=0$ in $\T^d$ (or $u\in \ker_{\T^d}\Bcal$) for $u\in L^p(Q^d;\R^m)$, we mean that
\begin{align*}
  -\int_{Q^d} u\cdot \Bcal^T \varphi \dd{x}=0\qquad \text{for all $\varphi\in C^\infty(\T^d;\R^l)$,}
\end{align*}
where $C^\infty(\T^d)$ is the space of smooth, $Q^d$-periodic functions that are smooth also over the gluing boundaries.

\subsection{Collection of tools and results on dimension reduction} An important step towards capturing the asymptotic behavior of $(F_{\eps, \delta})$ is the characterization of 
the limit PDE constraint in \eqref{Fepsdelta} for vanishing $\eps$. Before presenting the representation result obtained in~\cite{KR14}, we  state the required hypotheses on $\Acal$ and give the
natural definition of a limit operator for $(\Acal_\eps)$ as $\eps\todown 0$.

For our analysis, we make the following assumptions on $\Acal$:
\begin{itemize}
 \item[(A1)] {\it Constant-rank operator}\\ $\rank \Abb(\eta)=r\in \N$ for all $\eta\in \R^d\setminus\{0\}$; 
 \item[(A2)] {\it Normalization}\\ $A^{(d)}=\left[\begin{array}{c}A^{(d)}_+\\ \hline 0\end{array}\right]$ with $A^{(d)}_+\in \R^{r\times m}$ and $r =\rank A^{(d)}$.
\end{itemize}

\begin{remark}
Note that (A2) is not restrictive, since $\cA$ can always be modified  
to satisfy this condition artificially by multiplying $\cA$ from the left with a suitable invertible 
matrix in $\RR^{l\times l}$, but it helps simplify the representation of the limit operator.
\end{remark}

Let us define the operator\begin{align}\label{intro:Acal0}
  \Acal_0 := \left[\begin{array}{c}A^{(d)}_+\partial_d \\ \hline \Acal'_-\end{array}\right], 
\end{align}
where $\Acal$ is decomposed into 
\begin{align}\label{splittingA}
\Acal =\Acal'+ \left[\begin{array}{c}A^{(d)}_+\partial_d\\ \hline 0\end{array}\right]= \ \left[\begin{array}{c} \Acal'_+ +A^{(d)}_+\partial_d \\ \hline \Acal'_-\end{array}\right]=\left[\begin{array}{c}\Acal_+ \\ \hline \Acal_-\end{array}\right].
\end{align}

Here are two further hypotheses on $\Acal$:
\begin{itemize}
 \item[(A3)] {\it Extension property of $\Acal_0$}\\for every $u\in \Ucal_{\Acal_0}$ there exists a sequence 
$(\bar{u}_j)_{j}\subset L^p(Q^d;\R^m)\cap \ker _{\T^d}\Acal_0$ such that $\bar{u}_j \to u$ in $L^p(\Omega_1;\R^m)$ as $j\to \infty$;
 \item[(A4)] {\it Antisymmetry relation}\\
$A^{(k_1)}(A^{(d)})^\dagger A^{(k_2)} = - A^{(k_2)} (A^{(d)})^\dagger A^{(k_1)}$ for $k_1, k_2=1, \ldots, d-1$
with $(A^{(d)})^\dagger\in \R^{m\times l}$ the Moore-Penrose pseudoinverse of $A^{(d)}$.
\end{itemize}
\begin{remark}
The extension property required in (A3) effectively only restricts $\Acal'_-$. Indeed, extending in direction of $x_d$ is trivial, 
because if $P^{(d)}$ denotes the orthogonal projection onto $\ker A^{(d)}$ in $\RR^m$, then $A^{(d)}_+\partial_d u=0$ is equivalent to $\partial_d (I-P^{(d)}) u=0$, i.e, $(I-P^{(d)}) u$ is constant in $x_d$. This property, of course, remains valid for the $1$-periodic extension in $x_d$ of $u$.
\end{remark}
For a detailed discussion of these assumptions on $\Acal$, as well
as examples for operators satisfying (A1)-(A4) we refer to~\cite[Section~2]{KR14}.

\begin{lemma}[The limit operator \boldmath{$\Acal_0$}, {\cite[Theorem~1.1, Proposition~4.1]{KR14}}]\label{lem:A0}
Under the assumptions~(A1)-(A4), $\Acal_0: L^p(\Omega_1;\R^m)\to W^{-1,p}(\Omega_1;\R^l)$ as defined in~\eqref{intro:Acal0} is the limit operator of $(\Acal_\eps)$ for $\eps\todown 0$. 

Precisely, this means that
the following two conditions are satisfied:
\begin{itemize}
 \item[(i)] Let $\eps_j\todown 0$ and $u_j\in \Ucal_{\Acal_{\eps_j}}$ ($j\in \N$) such that $u_j\weakly u$ in $L^p(\Omega_1;\R^m)$ for $u\in L^p(\Omega_1;\R^m)$.
 Then $u\in \Ucal_{\Acal_0}$.
 \item[(ii)] For every $u\in \Ucal_{\Acal_0}$ and every $\eps_j\todown 0$, 
 there exists $u_j\in \Ucal_{\Acal_{\eps_j}}$ ($j\in \N$) such that $u_j\weakly u$ in $L^p(\Omega_1;\R^m)$.
\end{itemize}
\end{lemma}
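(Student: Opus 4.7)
My plan for the closure direction is a direct weak-limit argument exploiting the block structure induced by (A2). The lower block of $A^{(d)}$ vanishes, so the constraint $\Acal_{\eps_j} u_j=0$ decouples into the $\eps$-independent equation $\Acal'_- u_j = 0$ and, after multiplication by $\eps_j$, the equation $\eps_j\Acal'_+ u_j + A^{(d)}_+\partial_d u_j = 0$. The first passes directly to the weak limit to give $\Acal'_- u=0$. In the second, $\eps_j\Acal'_+ u_j\to 0$ strongly in $W^{-1,p}$ (a bounded sequence times $\eps_j\to 0$), while $A^{(d)}_+\partial_d u_j\weakly A^{(d)}_+\partial_d u$ in $W^{-1,p}$, forcing $A^{(d)}_+\partial_d u = 0$. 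Together these give $u\in\Ucal_{\Acal_0}$.

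\textbf{Part (ii), reduction to the periodic case.} For the recovery direction I would first invoke (A3) to replace $u$ by a sequence of periodic approximants $\bar u_k\in L^p(Q^d;\R^m)\cap\ker_{\T^d}\Acal_0$ converging to $u$ in $L^p(\Omega_1;\R^m)$. A standard diagonal selection then reduces the task to producing, for each fixed periodic $\bar u\in\ker_{\T^d}\Acal_0$, a sequence $w_j\in\Ucal_{\Acal_{\eps_j}}$ with $w_j\weakly\bar u$ in $L^p(\Omega_1;\R^m)$. Using $A^{(d)}_+\partial_d\bar u=0$ and $\Acal'_-\bar u=0$, a short calculation shows
\begin{equation*}
\Acal_\eps\bar u=\begin{pmatrix}\Acal'_+\bar u\\ 0\end{pmatrix},
\end{equation*}
so the task becomes to find a corrector $r_\eps\to 0$ in $L^p(\Omega_1;\R^m)$ with $\Acal_\eps(\bar u+r_\eps)=0$.

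\textbf{The corrector (main obstacle).} My plan is to start from the ansatz $r_\eps(x)=\eps\,x_d\,C(x')$ with $C:=-(A^{(d)}_+)^\dagger\Acal'_+\bar u$, where $(A^{(d)}_+)^\dagger$ denotes the Moore--Penrose pseudoinverse (well-defined since $A^{(d)}_+$ has full row rank $r$ by (A1)--(A2)). A direct computation gives
\begin{equation*}
\Acal_\eps(\bar u+r_\eps)=\begin{pmatrix}A^{(d)}_+ C+\Acal'_+\bar u\\ 0\end{pmatrix}+\eps\,x_d\begin{pmatrix}\Acal'_+ C\\ \Acal'_- C\end{pmatrix}.
\end{equation*}
The first summand vanishes by the choice of $C$, and the critical step is showing that the lower-block contribution $\Acal'_- C$ also vanishes---this is precisely where the antisymmetry relation (A4) enters. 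Writing
\begin{equation*}
\Acal'_- C=-\sum_{j,k<d}A^{(j)}_-(A^{(d)}_+)^\dagger A^{(k)}_+\,\partial_j\partial_k\bar u,
\end{equation*}
the matrix coefficients are antisymmetric in $(j,k)$ by (A4), while $\partial_j\partial_k$ is symmetric, so the double sum collapses to zero. Hence $\Acal_\eps(\bar u+r_\eps)=\eps\,x_d\,\binom{\Acal'_+ C}{0}\to 0$ in $L^p$, making $\bar u+r_\eps$ an approximately $\Acal_{\eps_j}$-free sequence. To upgrade this to an exact element of $\Ucal_{\Acal_{\eps_j}}$, I would iterate the same correction (each step reducing the residual by another factor of $\eps$, with (A4) again supplying the lower-block compatibility), or equivalently project onto $\ker\Acal_{\eps_j}$ using a bounded right inverse of $\Acal_{\eps_j}$ supplied by the constant-rank condition (A1). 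Since all correction terms are $O(\eps)$ strongly in $L^p$, the resulting $w_j$ satisfies $w_j\to\bar u$ in $L^p(\Omega_1;\R^m)$, in particular weakly, completing the construction.
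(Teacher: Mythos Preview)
The paper does not prove this lemma; it is quoted from \cite{KR14}. Your argument for part~(i) is correct and is essentially the natural one: split into the two blocks induced by~(A2), pass the lower block directly to the weak limit, and multiply the upper block by $\eps_j$ before passing to the limit.

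For part~(ii), the overall strategy---reduce to periodic $\bar u$ via~(A3), build a corrector using the pseudoinverse of $A^{(d)}_+$, and use~(A4) to kill the lower-block residual---is correct and is the mechanism behind the result in \cite{KR14}. However, there is a genuine slip in your ansatz. You write $C=C(x')$, but $\bar u\in\ker_{\TT^d}\Acal_0$ need not be independent of $x_d$: the constraint $A^{(d)}_+\partial_d\bar u=0$ only forces the component of $\bar u$ in $(\ker A^{(d)})^\perp$ to be constant in $x_d$, while $P^{(d)}\bar u$ may vary freely. Consequently $\Acal'_+\bar u$, and hence $C$, depends on $x_d$ in general. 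With $r_\eps=\eps\,x_d\,C(x)$ your displayed formula for $\Acal_\eps(\bar u+r_\eps)$ is then missing the term $x_d\,A^{(d)}\partial_d C$, which is of order~$O(1)$, not $O(\eps)$, so the leading cancellation fails.

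The fix is to replace the linear-in-$x_d$ ansatz by the integrated corrector
\[
  r_\eps(x)=-\eps\,(A^{(d)}_+)^\dagger\!\int_0^{x_d}\Acal'_+\bar u(x',s)\,ds,
\]
for which $\tfrac{1}{\eps}A^{(d)}\partial_d r_\eps=-\bigl(\begin{smallmatrix}\Acal'_+\bar u\\0\end{smallmatrix}\bigr)$ exactly, and your antisymmetry argument via~(A4) still shows $\Acal'_-r_\eps=0$. This leaves an $O(\eps)$ upper-block residual, and the iteration you sketch then works. One remaining point to handle carefully is that $r_\eps$ is in general not $1$-periodic in $x_d$, so the torus projection of Lemma~\ref{theorem_projection} does not apply directly; this is where some additional work (e.g.\ iterating on smooth $\bar u$ to obtain an exact $\Acal_\eps$-free field, or a suitable extension/cut-off before projecting) is needed to close the argument on $\Omega_1$.
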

The limit operator $\Acal_0$ can be considered unique in the sense that the set of $\Acal_0$-free vector fields $\Ucal_{\Acal_0}$ is unique.
Whereas the rescaled versions $\Acal_\eps$ of a constant-rank operator $\Acal$ have again the constant-rank property (A1), 
this is in general not true for the limit operator $\Acal_0$. 

A cornerstone for any variational problem subject to differential constraints is a 
suitable projection result that allows (re-)generating admissible fields and keeps control of the projection error. 
Such a tool was first proven by Fonseca \& M\"uller in~\cite[Lemma~2.14]{FM99} under the assumption that the constraint is conveyed by a 
constant-rank operator $\Acal$. The argument is based on discrete Fourier methods and exploits that the orthogonal projection 
$\Pbb(\eta)\in \R^{m\times m}$ onto $\ker \Abb(\eta)$ for $\eta\in \R^d\setminus \{0\}$ defines a Mikhlin Fourier-multiplier. In fact, $\Pbb$
is $0$-homogeneous and continuous in view of the constant-rank property.

The fact that the PDE constraint in~\eqref{Fepsdelta} depends on the parameter $\eps$ calls for 
a refined version of the projection result with uniformly bounded constants as provided in~\cite[Theorem~2.8]{KR14}:

\begin{lemma}[Projection onto $\Acal_\eps$-free fields, 
{\cite[Theorem~2.8]{KR14}}]\label{theorem_projection} 
Let $1<p<\infty$ and let $\Acal$ satisfy (A1). Then, for every $\eps>0$ there exists a linear, bounded projection operator 
$\Pcal_{\Acal_\eps}: L^p(Q^d;\R^m)\to L^p(Q^d;\R^m)\cap \ker_{\T^d}\Acal_\eps$
such that for all $u\in L^p(Q^d;\R^m)$:
\begin{enumerate}
 \item[\it (i)] 
$\norm{\Pcal_{\Acal_\eps} u}_{L^p(Q^d;\R^m)} \leq c_p \norm{u}_{L^p(Q^d;\R^m)}$
with a constant $c_p > 0$ independent of $\epsilon$.
\item[\it (ii)] There exists a constant $c_p>0$ such that for all $\eps>0$,
\[
  \qquad \norm{u-\Pcal_{\Acal_\eps} u}_{L^p(Q^d;\R^m)}\leq c_p \norm{\Acal_\eps u}_{W^{-1,p}(\T^d;\R^l)}.
\]
\item[\it (iii)] Let $\eps_j\todown 0$ and let $(u_j) \subset L^p(Q^d;\R^m)$ be $p$-equiintegrable. 
Then, the sequence $(\Pcal_{\eps_j} u_j)$ is still $p$-equiintegrable.
\end{enumerate}
\end{lemma}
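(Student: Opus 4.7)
The plan is to construct $\Pcal_{\Acal_\eps}$ as a Fourier multiplier on the torus $\T^d$, following the template of the Fonseca--M\"uller projection \cite{FM99} for the $\eps$-independent case; the new feature is uniformity in $\eps$. By the constant-rank assumption (A1), the orthogonal projection $\Pbb(\eta)\in\R^{m\times m}$ onto $\ker\Abb(\eta)$ is smooth and $0$-homogeneous on $\R^d\setminus\{0\}$, and hence satisfies the Mikhlin multiplier condition. Since the symbol of $\Acal_\eps$ is $\Abb_\eps(\eta)=\Abb(\eta',\eta_d/\eps)$, the projection onto $\ker\Abb_\eps(\eta)$ is $\Pbb_\eps(\eta):=\Pbb(\eta',\eta_d/\eps)$, and I would set
\[
\Pcal_{\Acal_\eps}u(x):=\sum_{k\in\Z^d}\Pbb_\eps(k)\,\hat{u}(k)\,e^{2\pi\ii\,k\cdot x},\qquad u\in L^p(Q^d;\R^m).
\]
This is automatically idempotent with range in $\ker_{\T^d}\Acal_\eps$, since $\Pbb_\eps(k)^2=\Pbb_\eps(k)$ and $\Abb_\eps(k)\Pbb_\eps(k)=0$ for all $k\in\Z^d$.

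The main obstacle is the uniform $L^p$-bound in (i): a direct Mikhlin estimate of the anisotropic symbol $\Pbb_\eps$ on $\R^d$ yields constants that blow up as $\eps\todown 0$, because each $\eta_d$-derivative of $\Pbb_\eps$ produces a factor $1/\eps$. The key idea to circumvent this is the substitution $\tilde u(y):=u(y',y_d/\eps)$, which is (up to the factor $\eps^{1/p}$ between the norms) an isomorphism between $L^p(\T^d)$ and $L^p(\T^d(\tilde Q_\eps))$ for $\tilde Q_\eps:=Q^{d-1}\times(0,\eps)$. Under this change of variables $\Pcal_{\Acal_\eps}$ on $\T^d$ corresponds exactly to the standard Fonseca--M\"uller projection with the fixed symbol $\Pbb$ on the scaled torus $\T^d(\tilde Q_\eps)$. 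Since $\Pbb$ is a Mikhlin multiplier on $\R^d$ whose constants are independent of $\eps$, a de Leeuw-type transference gives an $L^p$-bound on $\T^d(\tilde Q_\eps)$ uniform in $\eps$, and the scaling factors cancel when transferring back to $\T^d$, proving (i).

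For (ii), the key is the algebraic identity $I-\Pbb_\eps(\eta)=\Abb_\eps(\eta)^\dagger\Abb_\eps(\eta)$, where $\Abb_\eps^\dagger$ denotes the Moore--Penrose pseudoinverse. By (A1), $\Abb^\dagger$ is smooth and $(-1)$-homogeneous on $\R^d\setminus\{0\}$, so the corresponding Fourier-multiplier operator has order $-1$ and maps $W^{-1,p}(\T^d;\R^l)$ continuously into $L^p(\T^d;\R^m)$. Writing $u-\Pcal_{\Acal_\eps}u$ as the action of the multiplier $\Abb_\eps^\dagger$ on $\Acal_\eps u$ and repeating the rescaling argument of (i) then yields the uniform estimate in (ii).

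For (iii), I would argue by decomposition: given $\delta>0$, $p$-equiintegrability of $(u_j)$ furnishes $\lambda>0$ and a splitting $u_j=u_j^{(1)}+u_j^{(2)}$ with $\|u_j^{(1)}\|_{L^\infty}\leq\lambda$ and $\|u_j^{(2)}\|_{L^p}<\delta$. Since $\Pbb$ is a Mikhlin multiplier for every exponent $q\in(1,\infty)$ with the scaling constant still uniform in $\eps$, the sequence $(\Pcal_{\Acal_{\eps_j}}u_j^{(1)})$ is bounded in some $L^q$ with $q>p$ and is therefore $p$-equiintegrable, while $\|\Pcal_{\Acal_{\eps_j}}u_j^{(2)}\|_{L^p}\leq c_p\delta$ by (i). Since $\delta$ was arbitrary, $(\Pcal_{\Acal_{\eps_j}}u_j)$ is $p$-equiintegrable.
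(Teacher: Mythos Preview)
The paper does not give its own proof of this lemma; it is quoted as \cite[Theorem~2.8]{KR14} and used as a black box. Your proposal is essentially a faithful reconstruction of that proof: the Fourier-multiplier definition via $\Pbb_\eps(k)=\Pbb(k',k_d/\eps)$, the rescaling $y_d\mapsto y_d/\eps$ to reduce to the fixed Mikhlin symbol $\Pbb$ on an anisotropic torus (which is exactly how \cite{KR14} obtains the $\eps$-uniformity of the constant $c_p$), the identity $I-\Pbb_\eps=\Abb_\eps^\dagger\Abb_\eps$ together with $(-1)$-homogeneity of $\Abb^\dagger$ for (ii), and the truncation-plus-$L^q$ argument for (iii) are all the standard ingredients. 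One small point you glossed over is the convention at the zero frequency: one sets $\Pbb_\eps(0):=I$ so that the mean value is preserved, which is needed for $\Pcal_{\Acal_\eps}$ to be a genuine projection onto all of $\ker_{\T^d}\Acal_\eps$ (constants are $\Acal_\eps$-free). Otherwise your sketch matches the cited source.
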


Concerning dimension reduction of multiple functionals on $\Acal$-free fields, a $\Gamma$-limit result
(see~\cite{DalMa93B, Bra02} for an introduction to $\Gamma$-convergence) 
for the rescaled variational problem was established in~\cite{KR14}.
\begin{theorem}[Thin-film limit, {\cite[Theorem~1.1]{KR14}}]\label{theo:dimred}
Let $f:\omega\times\R^m \to [0, \infty)$ be the restriction of a function satisfying (H0), (H3)-(H5) and assume that $\Acal$ fulfills (A1)-(A4).
For $\eps>0$ and $u\in L^p(\Omega_1;\R^m)$ let
\begin{align*}
 I_\eps(u)=\begin{cases}
            \displaystyle\int_{\Omega_1} f(x, u(x))\dd{x} & \text{if $\Acal_\eps u=0$ in $\Omega_1$,}\\
            \infty & \text{otherwise.}
           \end{cases}
\end{align*}
Then, 
\begin{align*}
      I_0(u):=\Gamma\text{-}\lim_{\eps\to 0} I_\eps(u)=\begin{cases}
            \displaystyle\int_{\Omega_1} f(x, u(x))\dd{x} & \text{if $\Acal_0 u=0$ in $\Omega_1$,}\\
            \infty & \text{otherwise,}
           \end{cases}
     \end{align*}
regarding weak convergence in $L^p(\Omega_1;\R^m)$.
\end{theorem}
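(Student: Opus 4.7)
The plan is to split the $\Gamma$-convergence claim into the standard lower bound (liminf inequality) and existence of a recovery sequence (limsup inequality), observing that the integrand $f$ carries no explicit $\eps$-dependence here: the entire content of the statement is that the PDE constraint $\Acal_\eps u=0$ relaxes to $\Acal_0 u=0$ in the limit, with no microstructure formation driven by $f$. This should make the proof essentially a consequence of the compactness half of Lemma~\ref{lem:A0} combined with classical weak lower semicontinuity on one side, and a strong $L^p$-approximation argument based on Lemma~\ref{theorem_projection} on the other.

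\textbf{Liminf.} Let $u_\eps \weakly u$ in $L^p(\Omega_1;\R^m)$. After passing to a subsequence realizing $\liminf_\eps I_\eps(u_\eps)$, I may assume this liminf is finite and hence $u_\eps \in \Ucal_{\Acal_\eps}$ for all $\eps$. Lemma~\ref{lem:A0}(i) then delivers $u\in \Ucal_{\Acal_0}$. Classical weak $L^p$ lower semicontinuity of integral functionals with Caratheodory integrand convex in its second slot, available under (H0), (H3), (H5) via Ioffe's theorem, yields
\begin{align*}
\int_{\Omega_1} f(x,u(x))\dd x \;\leq\; \liminf_\eps \int_{\Omega_1} f(x,u_\eps(x))\dd x,
\end{align*}
which is exactly $I_0(u)\leq \liminf_\eps I_\eps(u_\eps)$.

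\textbf{Limsup.} For $u\in \Ucal_{\Acal_0}$ I aim to produce $u_\eps \in \Ucal_{\Acal_\eps}$ converging \emph{strongly} to $u$ in $L^p(\Omega_1;\R^m)$, since Remark~\ref{rem:ucont} then immediately gives $I_\eps(u_\eps)=\int f(\cdot,u_\eps)\to\int f(\cdot,u)=I_0(u)$. Using (A3) together with mollification on the ambient torus and a diagonal argument, I first reduce to the case $u\in \Ucal_{\Acal_0}\cap C^\infty(\overline{\Omega_1};\R^m)$. In the smooth case, the normalization (A2) combined with $\Acal_0 u=0$ forces $A^{(d)}\partial_d u=0$ and $\Acal'_- u=0$, so that $\Acal_\eps u=\Acal'_+ u$ is a bounded, $\eps$-independent quantity. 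I construct a corrector $w$ by solving $A^{(d)}_+\partial_d w=-\Acal'_+ u$ via an explicit $x_d$-primitive (possible since $A^{(d)}_+$ has full row rank $r$ by (A2)), with the antisymmetry (A4) ensuring that the ansatz is compatible with the residual lower-block constraint $\Acal'_- w=0$. Setting $\tilde u_\eps:=u+\eps w$, a short computation gives $\Acal_\eps \tilde u_\eps=\eps \Acal' w$, which is $O(\eps)$ in $W^{-1,p}$. After extending periodically to a reference torus, I apply the uniformly bounded $\Acal_\eps$-projection from Lemma~\ref{theorem_projection}; estimate (ii) there yields $\|\tilde u_\eps - \Pcal_{\Acal_\eps}\tilde u_\eps\|_{L^p} \leq c\, \|\Acal_\eps \tilde u_\eps\|_{W^{-1,p}} = O(\eps)$. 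Restricting $u_\eps:=\Pcal_{\Acal_\eps}\tilde u_\eps$ back to $\Omega_1$ gives an admissible sequence with $u_\eps\to u$ strongly in $L^p$.

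\textbf{Main obstacle.} The delicate step is upgrading the bare weak convergence supplied by Lemma~\ref{lem:A0}(ii) to \emph{strong} $L^p$ convergence of the recovery sequence, which is essential: for convex but non-affine $f$, weak convergence only grants a liminf bound, so a naive appeal to Lemma~\ref{lem:A0}(ii) cannot produce a $\limsup$ estimate. The upgrade is powered by the explicit $\eps$-scaled corrector $w$ above, and it is here that the structural assumptions on $\Acal$ are genuinely used: (A2) concentrates the leading-order residual $\Acal_\eps u$ in the top block absorbing $A^{(d)}$, (A4) forces the corrector to remain compatible with the lower-block constraint $\Acal'_-$, and (A3) underpins the initial reduction to smooth data by enabling a periodic extension and mollification within $\Ucal_{\Acal_0}$.
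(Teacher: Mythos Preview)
Your proposal is correct and matches the paper's own argument exactly: the paper's proof is nothing more than the remark following the theorem, namely that convexity of $f$ makes the liminf inequality immediate via weak lower semicontinuity together with Lemma~\ref{lem:A0}(i), while the limsup follows from a \emph{strong} version of Lemma~\ref{lem:A0}(ii) in which the approximating $\Acal_{\eps_j}$-free sequence converges in $L^p$-norm (this is \cite[Proposition~4.1]{KR14}, to which the paper defers). Your identification of the main obstacle---upgrading weak to strong convergence of the recovery sequence---is precisely the point.

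One remark on the extra detail you supply: your claim that (A4) is used to force $\Acal'_- w=0$ is not quite the mechanism. Once $A^{(d)}_+\partial_d w=-\Acal'_+ u$, one already has $\Acal_\eps(u+\eps w)=\eps\Acal' w$ in the interior regardless of $\Acal'_- w$, so the $O(\eps)$ bound there is automatic. The genuine subtlety is that the projection of Lemma~\ref{theorem_projection} lives on the torus $\T^d$, and your $x_d$-primitive $w$ is in general \emph{not} $1$-periodic in $x_d$; the resulting jump of $\eps w$ across $\{x_d=0\}$ produces, after multiplication by $\tfrac{1}{\eps}A^{(d)}$, a surface term of order one in $W^{-1,p}(\T^d)$ unless $A^{(d)}_+[w]=0$. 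It is in controlling this compatibility (or in building the corrector differently so the issue does not arise) that the structural hypotheses, in particular (A4), actually enter in \cite{KR14}. Your sketch has the right architecture but locates the role of (A4) at the wrong joint.
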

The convexity of $f$ makes the proof of the lower bound trivial, while a stronger version of Lemma~\ref{lem:A0},
where weak $L^p$-convergence in (ii) is replaced with strong convergence, yields the upper bound. 
In~\cite{KR14}, upper and lower bounds on the $\Gamma$-limit of $(I_\eps)$ are given also for  nonconvex $f$. The question of whether $I_0$ is local, though, is to our knowledge still open in that case.

\subsection{Collection of results on homogenization} 
Homogenization in the context of~variational problems restricted to $\Acal$-free 
fields was first studied by Braides, Fonseca \& Leoni in~\cite{BFL00}. 
Later, this result was enhanced in~\cite{FoKroe10a}, where the use of two-scale techniques allowed for weaker assumptions on the integrand.

\begin{theorem}[Homogenization, {\cite[Theorem~1.1]{FoKroe10a}}]\label{theo:hom}
Let $f$ satisfy (H0), (H2), (H3) and $\Acal$ be of constant rank, i.e.~(A1) holds. For $\delta>0$ and $u\in L^p(\Omega_1;\R^m)$ let
\begin{align*}
 J_\delta(u)=\begin{cases}
            \displaystyle\int_{\Omega_1} f\Bigl(\frac{x}{\delta}, u(x)\Bigr)\dd{x} & \text{if $\Acal u=0$ in $\Omega_1$,}\\
            \infty & \text{otherwise.}
           \end{cases}
\end{align*}
Then, the $\Gamma$-limit with respect to weak convergence in $L^p(\Omega_1;\R^m)$ has the form
\begin{align*}
      J_0(u):=\Gamma\text{-}\lim_{\delta\to 0} J_\delta(u)=\begin{cases}
            \displaystyle\int_{\Omega_1} \fhom(u(x))\dd{x} & \text{if $\Acal u=0$ in $\Omega_1$,}\\
            \infty & \text{otherwise,}
           \end{cases}\end{align*}
where 
\begin{align}\label{multicell}
\fhom(\xi) = \liminf_{n\to \infty} \inf_{v\in \Vcal_\Acal} \dashint_{Q^d}f(ny, \xi + v(y))\dd{y} 
= \inf_{n\in \N} \inf_{v\in \Vcal_\Acal} \dashint_{Q^d}f(ny, \xi + v(y))\dd{y}
\end{align}
for $\xi\in \R^m$ with $\Vcal_\Acal:=\{v\in L^p(Q^d;\R^m): v\in \ker_{\T^d}\Acal, \int_{Q^d} v\dd{y}=0\}$.
\end{theorem}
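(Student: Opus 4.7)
The plan is to establish the $\Gamma$-$\liminf$ and $\Gamma$-$\limsup$ inequalities separately, using two-scale convergence methods tailored to the $\Acal$-free framework together with the Fonseca--M\"uller projection $\Pcal_\Acal$ from Lemma~\ref{theorem_projection} (applied to $\Acal$ itself in place of $\Acal_\eps$). Since convexity is not assumed, the multi-cell structure of $\fhom$ must be extracted by a diagonal argument over cell sizes $Q^n$, which is the key structural novelty compared with the more classical convex case.

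\textbf{Lower bound.} Consider $u_\delta \weakly u$ in $L^p(\Omega_1;\R^m)$ with $\Acal u_\delta = 0$ and $\liminf J_\delta(u_\delta) < \infty$. For each fixed $n \in \N$, I would apply two-scale convergence with respect to the cell $Q^n$ and diagonalize so that along a single subsequence $u_\delta$ two-scale converges to $u_0^{(n)} \in L^p(\Omega_1 \times \T^d(Q^n);\R^m)$ for every $n$ simultaneously, with $\dashint_{Q^n} u_0^{(n)}(x,y)\,dy = u(x)$. Testing the identity $\Acal u_\delta = 0$ against $\delta\,\varphi(x)\,\eta(x/\delta)$ and letting $\delta \to 0$ transfers the constraint to $\Acal^y u_0^{(n)}(x,\cdot) = 0$ on $\T^d(Q^n)$ for a.e.~$x$. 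The constant-rank hypothesis (A1), through $\Pcal_\Acal$, allows one to upgrade weak two-scale convergence of $u_\delta$ (equivalently, weak $L^p$-convergence of the associated unfoldings) to strong convergence of the $\Acal^y$-free component, delivering the two-scale lower semicontinuity
\[
\liminf_{\delta\to 0} J_\delta(u_\delta) \geq \int_{\Omega_1} \dashint_{Q^n} f(y, u_0^{(n)}(x,y))\,dy\,dx.
\]
Since $u_0^{(n)}(x,\cdot) - u(x)$ is $Q^n$-periodic, $\Acal$-free and zero-mean, and corresponds bijectively (via $w \leftrightarrow w(n\,\cdot)$) to an element of $\Vcal_\Acal$, this yields $\liminf J_\delta(u_\delta) \geq \int f_{\mathrm{hom}}^{(n)}(u(x))\,dx$, where $f_{\mathrm{hom}}^{(n)}$ is the $n$-th competitor in the $\inf_n$ defining $\fhom$. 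Along the factorial subsequence $n = k!$, the functions $f_{\mathrm{hom}}^{(n)}$ are monotone decreasing (as $Q^n$-periodicity implies $Q^{n'}$-periodicity whenever $n \mid n'$) and converge pointwise to $\fhom$; the $L^1$-majorant $c_1(1+|u|^p)$ from (H3) legitimizes dominated convergence, giving $\liminf J_\delta(u_\delta) \geq \int_{\Omega_1} \fhom(u(x))\,dx$.

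\textbf{Upper bound.} Given $u$ with $\Acal u = 0$, I would first reduce to $u$ piecewise constant on a regular grid of $\Omega_1$, using density of such fields combined with the projection $\Pcal_\Acal$ to restore the $\Acal$-free structure; continuity of $\fhom$, which follows from (H3), controls the error. For constant $u \equiv \xi$, given $\eta>0$ pick $n \in \N$ and $v \in \Vcal_\Acal$ with $\dashint_{Q^d} f(ny, \xi + v(y))\,dy \leq \fhom(\xi)+\eta$ and set $u_\delta(x) := \xi + v\bigl(x/(n\delta)\bigr)$. Then $\Acal u_\delta = 0$ (from $\Acal v = 0$ and the chain rule), and $u_\delta \weakly \xi$ in $L^p$ by the zero mean of $v$. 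The substitution $y = x/(n\delta)$ rewrites the energy as the integral over $\Omega_1/(n\delta)$ of the $Q^d$-periodic function $y \mapsto f(ny, \xi + v(y))$; periodic averaging (Riemann--Lebesgue) yields
\[
\int_{\Omega_1} f\bigl(x/\delta, u_\delta(x)\bigr)\,dx \to |\Omega_1|\,\dashint_{Q^d} f(ny, \xi + v(y))\,dy \leq |\Omega_1|(\fhom(\xi)+\eta),
\]
and a diagonal in $\eta$ closes the constant case. Piecewise constant $u = \sum_i \xi_i \mathbf{1}_{D_i}$ is assembled from the constant-$\xi_i$ sequences on each $D_i$ via thin cutoff layers near $\partial D_i$; $\Pcal_\Acal$ restores the $\Acal$-free constraint on these layers, and item~(iii) of Lemma~\ref{theorem_projection} keeps the correction $p$-equiintegrable, so its energetic cost vanishes with the cutoff thickness thanks to (H3).

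\textbf{Main obstacle.} The hardest point is the two-scale lower-semicontinuity estimate without convexity of $f$: weak two-scale convergence is in general too weak for Fatou-type lower bounds, and the required upgrade to strong two-scale convergence of the $\Acal^y$-free component rests crucially on the constant-rank hypothesis (A1) via $\Pcal_\Acal$. The second delicate layer is the diagonalization in $n$ that promotes the single-cell bound to the multi-cell formula; this is precisely why $\fhom$ must be written as an infimum over all cell sizes rather than a single integral on $Q^d$, and is the specific feature that the convex setting hides.
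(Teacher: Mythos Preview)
The paper does not prove this theorem at all: it is quoted verbatim from \cite{FoKroe10a} as a known input (note the attribution in the theorem header and its placement in the ``Collection of results on homogenization'' part of the preliminaries). There is therefore no proof in the present paper to compare against; what you have written is a reconstruction attempt of the argument from the cited reference.

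Regarding the substance of your sketch, the upper bound is broadly along the right lines, but the lower bound contains a genuine gap. You write that the constant-rank hypothesis ``allows one to upgrade weak two-scale convergence of $u_\delta$ \ldots\ to strong convergence of the $\Acal^y$-free component, delivering the two-scale lower semicontinuity
\[
\liminf_{\delta\to 0} J_\delta(u_\delta) \geq \int_{\Omega_1} \dashint_{Q^n} f\bigl(y, u_0^{(n)}(x,y)\bigr)\dd{y}\dd{x}.\text{''}
\]
This is not correct as stated. Since $u_\delta$ is already $\Acal$-free, its unfolding is (up to boundary effects) $\Acal^y$-free, so the ``$\Acal^y$-free component'' is essentially the whole sequence; you are thus claiming that weak two-scale convergence implies strong two-scale convergence, which is false. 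And without convexity of $f(y,\cdot)$ the displayed inequality with the \emph{weak} two-scale limit $u_0^{(n)}$ on the right simply does not hold in general---compare Lemma~\ref{lem:lsc_threescale} in this paper, which explicitly needs (H5) for the analogous estimate. The actual mechanism in \cite{FoKroe10a} is different: one first invokes the $\Acal$-free decomposition lemma to reduce to $p$-equiintegrable sequences, and then combines unfolding with a localization (blow-up) argument at Lebesgue points of $u$, so that after freezing the slow variable the problem becomes a genuine periodic cell problem in $y$ and the infimum defining $\fhom$ can be read off directly. Your diagonalization over $n$ is a correct ingredient, but it only enters once this harder step has been carried out.
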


Next, we collect some properties of the homogenization formula $\fhom$, as well as 
equivalent ways of writing~\eqref{multicell}.
\begin{remark}\label{rem:multicell}
 a) It is immediate to see that $\fhom$ inherits $p$-growth and $p$-coercivity from $f$ (compare (H3), (H4)), precisely
\begin{align*}
  0\leq \fhom(\xi)\leq c_1(1+\abs{\xi}^p) \qquad\text{and}\qquad \fhom(\xi)\geq c_2\abs{\xi}^p -c_3
\end{align*}
for every $\xi\in \R^m$.

b) If $f$ is convex in the second variable, meaning that (H5) holds (in addition to the assumptions of Theorem~\ref{theo:hom}), 
$\fhom$ is a convex function as well. This follows from the well-known fact that the $\Gamma$-limit of a family of convex functionals is again convex, 
which implies the convexity of $J_0$. From testing with constant ($\Acal$-free) fields, we then infer that $\fhom$ is convex.

c) If $f$ satisfies (H5), the infimum over 
$n$ in~\eqref{multicell} is attained at $n=1$. In other words, for $f(z, \frarg)$ convex for almost all $z\in \R^{d}$,
the multicell formula reduces to the cell formula
\begin{align}\label{cell}
 \fhom(\xi)=\inf_{v\in \Vcal_\Acal} \dashint_{Q^d}f(y, \xi + v(y))\dd{y},\qquad \xi\in \R^m.
\end{align}
This can be seen by a standard trick in homogenization theory (cf.~\cite[Lemma 4.1]{Mue87}): 
For any choice of $n$, the test function $v\in \Vcal_\Acal$ can be replaced by the $\frac{1}{n}Q^d$-periodic function
\begin{align*}
v^\sharp(y):=\frac{1}{n^d}\sum_{k\in \N^d_0, \abs{k}<n} v\Bigl(y+\frac{k}{n}\Bigr), \qquad y\in Q^d.
\end{align*}
By (H5) and a change of variables, we obtain that
\begin{align*}
\dashint_{Q^d} f(ny, \xi+v(y))\dd{y} \geq \dashint_{Q^d} f(ny, \xi+v^\sharp(y))\dd{y} = \dashint_{Q^d} f(y, \xi + \tilde{v}(y))\dd{y}
\end{align*}
for $\tilde{v}:= v^\sharp(\frac{\frarg}{n})$, which is again a function in $\Vcal_\Acal$.

d) Assuming that $f$ is convex in the second variable, one can use a similar argument as in c) to show that $\fhom$ can also be expressed in terms of a generalized multicell formula, that is
\begin{align*}
\fhom(\xi) = \inf_{n\in \N^d}\inf_{v\in \Vcal_\Acal}\dashint_{Q^d} f(n_1y_1, n_2y_2, \ldots, n_d y_d, \xi+v(y))\dd{y}, \qquad \xi\in \R^m.
\end{align*}

e) A density argument based on convolution shows that $\Vcal_\Acal$ in~\eqref{multicell} and~\eqref{cell}
can be substituted for the smaller set of admissible functions 
$V_\Acal$ defined by
\begin{align*}
V_{\Acal}= \{\textstyle v\in C^\infty(\T^d;\R^m): \Acal v=0\text{ in $Q^d$}, \int_{Q^d} v\dd{y}=0\}.
\end{align*}
Here, $\Acal v=0$ in $Q^d$ can be interpreted in the sense of classical derivatives and is equivalent to $\Acal v=0$ in $\T^d$ (in the sense of distributions) for $v\in C^\infty(\T^d;\R^m)$.
Notice that (H3) is needed here to apply Lebesgue's convergence theorem when passing to the limit. 
\end{remark}

Without further mentioning, we will always choose among these various equivalent definitions of $\fhom$ the one best suited 
for the situation at hand.

\section{Relevant scaling regimes and main results}\label{sec:regimes&results}
This article contains a discussion of the asymptotic behavior of 
the variational principle~\eqref{vp_beforerescaling} in the five scaling regimes illustrated in Fig.~\ref{fig:regimes}.

We distinguish two classes of limit processes, these are successive and simultaneous limits. 
In the successive case, one can either homogenize $(F_{\eps, \delta})$ first by letting $\delta$ tend to zero, and then perform the dimension reduction by passing to the limit in $\eps$, or proceed the other way around.
To keep the notation short, we will refer to these two regimes simply as $\delta, \eps \todown 0$ and $\eps, \delta\todown 0$, where the order of $\eps$ and $\delta$ indicates the order of the limit processes. On the other hand, for the simultaneous case, we assume that the relation between $\eps$ and $\delta$ is given by $\delta=\eps^\alpha$ with a scaling parameter $\alpha>0$, and study the asymptotic behavior of the family $(F_{\eps, \eps^\alpha})$ for $\eps \todown 0$. If $\alpha>1$, this means that the film thickness is large compared to the heterogeneities, or in other words that
we are dealing with a film with sufficiently fine heterogeneous substructure or layers. For $\alpha=1$, the film thickness and scale of heterogeneities are comparable, while for $\alpha<1$ one can think of thin film with coarse-grained heterogeneities.

Stated below, there is a summary of the main results of this work, which depend critically on the scaling regime under consideration. 

We start with the positive results leading to an explicit local representation formula for the limit behavior of~\eqref{vp_beforerescaling} both
in the successive regime $\delta, \eps\todown 0$ and for the simultaneous case with $\alpha>1$.
The proofs are given in Section~\ref{sec:deltaeps} and~\ref{sec:alpha>1}, respectively.

\begin{theorem}[Local limit for \boldmath{$\delta, \eps\todown 0$}]\label{theo:successive_local}
If $f$ satisfies (H0), (H2)--(H5), and $\Acal$ meets the hypotheses (A1)-(A4), then
\begin{align*}
\Gamma\text{-}\lim_{\eps\to 0}[\Gamma\text{-}\lim_{\delta \to 0} F_{\eps, \delta}(u)] = \begin{cases}\int_{\Omega_1} \fhom (u)\dd{x} & \text{if $\Acal_0 u=0$ in $\Omega_1$,} \\ \infty & \text{otherwise}\end{cases}=:F_0(u)
\end{align*}
for $u\in L^p(\Omega_1;\R^m)$. Both $\Gamma$-limits in the definition of $F_0$ are taken with respect to weak convergence in $L^p$.
\end{theorem}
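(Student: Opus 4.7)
The plan is to apply the two building blocks assembled in Section~\ref{sec:pre} in sequence: Theorem~\ref{theo:hom} (homogenization at fixed $\eps$) to compute the inner $\Gamma$-limit as $\delta\to 0$, and then Theorem~\ref{theo:dimred} (thin-film limit) applied to the resulting family to compute the outer one as $\eps\to 0$. Since the iterated $\Gamma$-limit in the statement is taken in a prescribed order, it is enough to identify each of these two limits individually and then compose.

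For the inner step, fix $\eps>0$ and reverse the thin-film rescaling by setting $\tilde x:=(x',\eps x_d)$ and $\tilde v(\tilde x):=v(\tilde x',\tilde x_d/\eps)$. The map $v\mapsto\tilde v$ is a linear homeomorphism $L^p(\Omega_1;\R^m)\to L^p(\tilde\Omega_\eps;\R^m)$, with $\tilde\Omega_\eps:=\omega\times(0,\eps)$, that preserves weak $L^p$ convergence. A direct computation together with the chain rule in distributional pairings yields
\[
F_{\eps,\delta}(v)=\frac{1}{\eps}\int_{\tilde\Omega_\eps} f\Bigl(\frac{\tilde x}{\delta},\tilde v(\tilde x)\Bigr)\dd\tilde x \qquad\text{and}\qquad \Acal_\eps v=0\text{ in }\Omega_1 \iff \Acal\tilde v=0\text{ in }\tilde\Omega_\eps.
\]
Hence, up to the positive constant $1/\eps$ and the isomorphism $v\leftrightarrow\tilde v$, the family $(F_{\eps,\delta})_\delta$ is exactly of the form to which Theorem~\ref{theo:hom} applies on the fixed Lipschitz domain $\tilde\Omega_\eps$. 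Transporting the conclusion back to $\Omega_1$ gives
\[
G_\eps(v) := \Gamma\text{-}\lim_{\delta\to 0} F_{\eps,\delta}(v) = \begin{cases} \displaystyle\int_{\Omega_1}\fhom(v(x))\dd x & \text{if }\Acal_\eps v=0\text{ in }\Omega_1,\\ \infty & \text{otherwise},\end{cases}
\]
with respect to weak convergence in $L^p(\Omega_1;\R^m)$.

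For the outer step, the density $\fhom:\R^m\to[0,\infty)$, viewed as independent of $x\in\Omega_1$, inherits (H0) (continuity follows from convexity and finiteness on $\R^m$, hence local Lipschitz continuity), (H3) and (H4) from Remark~\ref{rem:multicell}~(a), and (H5) from Remark~\ref{rem:multicell}~(b). Therefore Theorem~\ref{theo:dimred} applies to the family $(G_\eps)_\eps$ and yields
\[
\Gamma\text{-}\lim_{\eps\to 0} G_\eps(v) = \begin{cases} \displaystyle\int_{\Omega_1}\fhom(v(x))\dd x & \text{if }\Acal_0 v=0\text{ in }\Omega_1,\\ \infty & \text{otherwise},\end{cases}
\]
which is precisely $F_0(v)$.

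The only genuine verification in the scheme above is the intertwining $\Acal_\eps v=0 \iff \Acal\tilde v=0$ together with the compatibility of the rescaling with weak $L^p$ convergence; both follow directly from the definition of $\Acal_\eps$ after a change of variables in the distributional pairing against test functions in $C_c^\infty(\Omega_1;\R^l)$ (respectively $C_c^\infty(\tilde\Omega_\eps;\R^l)$). Once this bookkeeping is in place, the proof becomes a pure concatenation of two already proven $\Gamma$-convergence results, with no interaction between $\delta$ and $\eps$ ever entering the argument---which is precisely what makes the successive regime structurally simpler than the simultaneous regimes analysed in the sections that follow.
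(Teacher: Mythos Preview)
Your proof is correct and follows the same two-step strategy as the paper: homogenize at fixed $\eps$ via Theorem~\ref{theo:hom}, then pass to the thin-film limit via Theorem~\ref{theo:dimred}. The only difference is cosmetic: you undo the rescaling and apply Theorem~\ref{theo:hom} on $\tilde\Omega_\eps$ with the original operator $\Acal$ and unit cell $Q^d$, whereas the paper stays on $\Omega_1$, applies the homogenization result with the rescaled operator $\Acal_\eps$ and the stretched cell $E_\eps=Q^{d-1}\times(0,1/\eps)$, and then verifies by a change of variables that the resulting cell formula $f_{\rm hom}^\eps$ coincides with $\fhom$---your route simply absorbs this scaling identity into the initial change of variables.
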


The same limit functional describes the asymptotics of $(F_{\eps, \delta})$ in the related simultaneous regime.
\begin{theorem}[Local limit for \boldmath{$\alpha>1$}]\label{theo:simultaneous_local}
Let $\alpha>1$. If $f$ satisfies (H0)--(H5), and $\Acal$ meets the hypotheses (A1) and (A2), then
\begin{align*}
\Gamma\text{-}\lim_{\eps\to 0} F_{\eps, \eps^\alpha}=F_0,
\end{align*}
where the $\Gamma$-limit  is understood with respect to weak convergence in $L^p(\Omega_1;\R^m)$.
\end{theorem}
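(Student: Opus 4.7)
The plan is to establish matching $\Gamma$-liminf and $\Gamma$-limsup with respect to weak $L^p$-convergence. The underlying intuition for $\alpha>1$ is that both heterogeneity scales $\eps^\alpha$ (tangential) and $\eps^{\alpha-1}$ (transverse) vanish in the limit, so the asymptotic behavior should match the ``homogenize first, then dimension-reduce'' limit $F_0$ of Theorem~\ref{theo:successive_local}. The key algebraic observation driving both halves of the proof is: for any $v\in C^\infty(\T^d;\R^m)$ with $\Acal v=0$, the anisotropically rescaled function $w_\eps(x):=v(x'/\eps^\alpha,x_d/\eps^{\alpha-1})$ satisfies $\Acal_\eps w_\eps=\eps^{-\alpha}(\Acal v)(x'/\eps^\alpha,x_d/\eps^{\alpha-1})=0$, since the chain-rule factor $\eps^{-(\alpha-1)}$ from differentiation in $x_d$ combines with the $\eps^{-1}$ in $\Acal_\eps$ to match $\eps^{-\alpha}$ from tangential differentiation. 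This compatibility is what permits us to decorate any $\Acal_\eps$-admissible macroscopic state with cell-periodic microstructure at no $\Acal_\eps$-cost.

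For the upper bound, given $u\in\Ucal_{\Acal_0}$ I would first apply the strong $L^p$-refinement of Lemma~\ref{lem:A0}(ii) used in the proof of Theorem~\ref{theo:dimred} to obtain $u_\eps^{(0)}\in\Ucal_{\Acal_\eps}$ with $u_\eps^{(0)}\to u$ strongly in $L^p$. Since $\fhom$ is convex with $p$-growth (Remark~\ref{rem:multicell}(a),(b)), $F_0$ is $L^p$-continuous on its effective domain, so by density one may reduce to $u$ piecewise constant in $x'$ on a partition of $\Omega_1$ into columns $\{Q_j'\times(0,1)\}_{j}$ with values $\xi_j$ (using columns rather than arbitrary cubes avoids $\partial_d$-derivatives of cut-offs, which would otherwise generate $W^{-1,p}$-defects of order $\eps^{-1}$). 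For each column pick $v_j\in V_\Acal$ near-optimal in the cell formula of Remark~\ref{rem:multicell}(c), so that $\dashint_{Q^d}f(y,\xi_j+v_j(y))\,dy\leq\fhom(\xi_j)+\eta$, and set $\tilde u_\eps(x):=u_\eps^{(0)}(x)+\sum_j\chi_{Q_j'}(x')\,v_j(x'/\eps^\alpha,x_d/\eps^{\alpha-1})$. By the identity of the first paragraph the oscillatory term is $\Acal_\eps$-free on the interior of each column, and the defect on $\bigcup_j\partial Q_j'\times(0,1)$ consists only of $\Acal'$-surface terms with oscillating traces, which vanish in $W^{-1,p}$ by Riemann--Lebesgue on each hypersurface. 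Applying the projection $\Pcal_{\Acal_\eps}$ of Lemma~\ref{theorem_projection} then produces $u_\eps\in\Ucal_{\Acal_\eps}$ with $u_\eps-\tilde u_\eps\to 0$ in $L^p$, and the integral converges by Riemann--Lebesgue combined with the uniform continuity of Remark~\ref{rem:ucont} to $\sum_j|Q_j|\dashint_{Q^d} f(y,\xi_j+v_j(y))\,dy\leq F_0(u)+\eta|\Omega_1|$; passing $\eta\to 0$ closes the upper bound.

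For the lower bound, let $u_\eps\rightharpoonup u$ in $L^p$ with $\Acal_\eps u_\eps=0$, so $u\in\Ucal_{\Acal_0}$ by Lemma~\ref{lem:A0}(i). I would invoke two-scale convergence adapted to the anisotropic scaling $(x'/\eps^\alpha,x_d/\eps^{\alpha-1})$: up to a subsequence realizing the $\liminf$, $u_\eps$ two-scale converges to some $U\in L^p(\Omega_1\times Q^d;\R^m)$ with $\int_{Q^d}U(x,y)\,dy=u(x)$. Testing $\Acal_\eps u_\eps=0$ against $\varphi_\eps(x):=\phi(x)\psi(x'/\eps^\alpha,x_d/\eps^{\alpha-1})$ for $\phi\in C_c^\infty(\Omega_1)$ and $\psi\in C^\infty(\T^d;\R^l)$, the leading term of $\Acal_\eps^T\varphi_\eps$ is $\eps^{-\alpha}\phi(x)(\Acal^T\psi)(y_\eps)$ and the remainder is of order $\eps^{-1}$; multiplying the identity by $\eps^\alpha$ produces a remainder of order $\eps^{\alpha-1}$, which vanishes precisely because $\alpha>1$, leaving $\int_{Q^d}U(x,y)\cdot(\Acal^T\psi)(y)\,dy=0$ for a.e.\ $x$. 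Hence $\Acal^yU(x,\cdot)=0$ on $\T^d$ for a.e.\ $x$, and by lower semicontinuity of the convex integrand under two-scale convergence together with the cell formula of Remark~\ref{rem:multicell}(c) applied to the $\Acal$-free field $U(x,\cdot)$ with mean $u(x)$,
\[
\liminf_{\eps\to 0}F_{\eps,\eps^\alpha}(u_\eps)\geq\int_{\Omega_1}\dashint_{Q^d}f(y,U(x,y))\,dy\,dx\geq\int_{\Omega_1}\fhom(u(x))\,dx=F_0(u).
\]
The principal obstacle is the anisotropic two-scale framework itself: the classical Nguetseng--Allaire compactness uses isotropic scaling $x/\eps$, so one must verify two-scale compactness with the two distinct scales $\eps^\alpha$ and $\eps^{\alpha-1}$ and, more importantly, show that the $\Acal_\eps$-constraint transfers to $\Acal$-periodicity of $U$ in $y$. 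It is precisely the hypothesis $\alpha>1$ (used to make the $O(\eps^{\alpha-1})$-remainder vanish, and also to ensure both micro-scales genuinely tend to zero) that makes this passage to the limit legitimate.
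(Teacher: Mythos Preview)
Your lower bound is essentially the paper's: what you call anisotropic two-scale convergence is exactly the paper's ``reduced weak three-scale convergence'', and your derivation of $\Acal^y U(x,\cdot)=0$ by multiplying the tested constraint through by $\eps^\alpha$ and discarding the $O(\eps^{\alpha-1})$ remainder is Lemma~\ref{lem:char_threescale}.

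The upper bound has a genuine gap. Your reduction to $u$ piecewise constant on \emph{columns} $Q_j'\times(0,1)$ with values $\xi_j$ only covers limits that are constant in $x_d$, but $\Ucal_{\Acal_0}$ contains functions that vary in $x_d$: the constraint $A^{(d)}_+\partial_d u=0$ fixes only $(I-P^{(d)})u$, leaving the $\ker A^{(d)}$-component (in the gradient case, the Cosserat director $b$) free to depend on $x_d$. Column-constant functions are therefore not $L^p$-dense in $\Ucal_{\Acal_0}$, and your density step fails. This is the crux of the proof, not a technicality: the paper approximates $u$ on a \emph{cubical} grid (Step~1 of Proposition~\ref{prop:upperboundA}) and must then confront cut-offs $\eta(x_d)$ in the transverse direction, which produce a defect $\frac{1}{\eps_j}\eta'(x_d)A^{(d)}v_j$ of apparent order $\eps_j^{-1}$ in $W^{-1,p}$. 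The key Lemma~\ref{lem:localize} shows this defect actually vanishes provided the corrector oscillates fast enough in $x'$ (period $\tau_j$ with $\tau_j/\eps_j\to 0$, obtained via a convex averaging trick), and with $\tau_j=\eps_j^\alpha$ this is precisely where $\alpha>1$ enters the upper bound. Indeed, your column argument never uses $\alpha>1$ and would---if it worked---establish locality also for $\alpha\le 1$, contradicting Theorem~\ref{theo:simultaneous_nonlocal}, whose counterexample $u_0$ jumps exactly in $x_d$. (A secondary issue: hard characteristic functions $\chi_{Q_j'}$ give surface measures whose pairing with the oscillating corrector does not obviously vanish by Riemann--Lebesgue on a fixed hyperplane; the paper uses smooth cut-offs $\varrho^{(k)}$ and the compact embedding $L^p\hookrightarrow W^{-1,p}$ via Lemma~\ref{lem:localize2} instead.)
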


\begin{figure}[t]
\begin{tikzpicture}
\node at (-0.1,0) {$F_{\eps, \delta}$};
\node at (5,0) {$F_{\eps}$};
\node at (0,-5) {$F_\delta$};
\node at (5,-5) {$F$};
\draw[->, thick] (-2,0.5)-- node[above, rotate=90]{dimension reduction} (-2, -6);
\draw[->, thick] (-1, -6) -- node[below]{homogenization} (6,-6);
\draw[->, densely dashed] (0.25,-0.25) --(4.75,-4.75);
\draw[->]  (0.25,0) --node[above] {$\delta\todown 0$} (4.75,0);
\draw[->] (0,-0.25) --node[left] {$\eps\todown 0$} 
(0, -4.75);
\draw[->, densely dashed] (0.25,-5) --node[below] {$\delta\todown 0$}(4.75,-5);
\draw[->] (5,-0.25) -- node[right] {$\eps\todown 0$}(5,-4.75);
\draw[->, densely dashed] (0.125,-0.25) arc [radius= 4.6, start angle=180, end angle=270];
\draw[->] (0.25,-0.125) arc [radius= 4.6, start angle=90, end angle=0];
\node at (3.7,-1) {$\alpha>1$};
\node at (1,-3.7) {$\alpha<1$};
\node at (3.2,-2.5) {$\alpha=1$};
\end{tikzpicture}

\caption{Overview of scaling regimes for $\eps>0$ and $\delta>0$, with solid and dashed lines indicating local limit processes and the possibility of nonlocal effects, respectively.\label{fig:regimes}}
\end{figure}
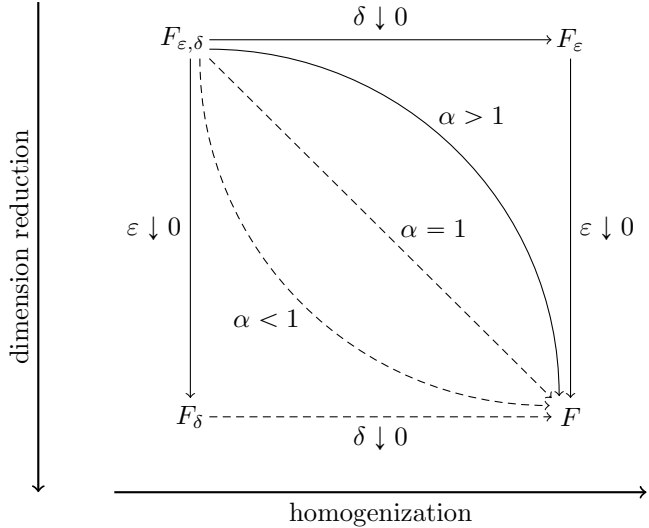

The following remark is about two relevant special cases of heterogeneities and their implications on the form of the homogenization formula.
\begin{remark}\label{lem:properties_fhom}
Let the assumptions (H0), (H2), and (H5) hold.

a) If $f$ is constant in $z_d$, i.e.~$f(z,\xi)=f(z', \xi)$ for $z\in \R^d$ and $\xi\in \R^m$, 
then
\begin{align*}
\fhom=\fhomprime
\end{align*} 
for all $\eps>0$, where
\begin{align*}
\fhomprime(\xi) := \inf_{v\in \Vcal_{\Acal'}} \dashint_{Q^{d-1}} f(y', \xi+v(y'))\dd{y'} 
\end{align*}
with $\Vcal_{\Acal'}=\{v\in L^p(Q^{d-1};\R^m): v\in \ker_{\T^{d-1}}\Acal', \int_{Q^{d-1}}v\dd{y'}=0\}$. Choosing $f$ independent of the variable $z_d$ corresponds to the assumption that the film is homogeneous in thickness direction.

Since $\Vcal_{\Acal'}\subset \Vcal_{\Acal}$ after identification of functions in $\Vcal_{\Acal'}$ with their constant extensions in $y_d$-direction, the estimate $\fhomprime\geq \fhom$ is immediate.

For the proof of the converse implication, fix $\delta>0$. By Remark~\ref{rem:multicell}\,e) we can choose $v\in V_{\Acal}$ such that
\begin{align*}
\fhom(\xi)\geq \dashint_{Q^d} f(y', \xi +v(y))\dd{y} - \delta.
\end{align*}
Exploiting the convexity of $f$ in the second argument by applying Jensen's inequality with respect to the $y_d$-variable leads to
\begin{align}\label{est8}
\fhom(\xi)\geq \dashint_{Q^{d-1}} f(y', \xi + \textstyle\int_0^1v(y', y_d)\dd{y_d})\dd{y'}-\delta= \displaystyle\dashint_{Q^{d-1}} f(y', \xi +w(y'))\dd{y'}-\delta,
\end{align}
where $w:=\int_{0}^1 v(\frarg, y_d)\dd{y_d}\in C^\infty(\T^{d-1};\R^m)$ with $\int_{Q^{d-1}}w\dd{y'}=\int_{Q^d} v\dd{y}=0$.  Besides, it holds that  $\Acal' w=0$ in $\T^{d-1}$ (in the sense of distributions) or, equivalently $\Acal' w=0$ in $Q^{d-1}$ pointwise. To see the latter, we argue that 
\begin{align*}
\Acal' w(y')&= \int_0^1 \Acal' v(y', y_d)\dd{y_d} =\int_0^1 \Acal v(y', y_d) -A^{(d)} \partial_d v(y',y_d)\dd{y_d}\\ &=-A^{(d)}\int_0^1 \partial_d v(y', y_d)\dd{y_d} =0
\end{align*}
for $y'\in Q^{d-1}$, in view of $\Acal v=0$ in $Q^d$ and the periodicity of $v\in C^\infty(\T^d;\R^m)$. 
Thus, $\fhom\geq \fhomprime$ follows from~\eqref{est8}, considering that $\delta>0$ was arbitrary.

b) Another special case is the situation when the material composition of the film can only vary in thickness direction and is homogeneous along the film. If $f$ is constant in $z'$, i.e.~$f(z, \xi)=f(z_d, \xi)$ for $z\in \R^d$ and $\xi\in \R^m$, then
\begin{align}\label{equ1}
\fhom(\xi)= f_{\rm hom}^{A^{(d)}}(\xi):= \inf_{v\in \Vcal_{A^{(d)}}} \int_0^1 f(y_d, \xi+v(y_d))\dd{y_d} 
\end{align}
with $\Vcal_{A^{(d)}}=\{v\in L^p(0,1;\R^m) : A^{(d)} v={\rm const.}, \int_{0}^1v\dd{y_d}=0\}$.
The proof of~\eqref{equ1} is similar to a) and even easier, just observe that for $w:=\int_{Q^{d-1}}v(y', \frarg)\dd{y'}$ with $v\in V_\Acal$ we have that $A^{(d)}\partial_d w(y_d) = \int_{Q^{d-1}} A^{(d)}\partial_d v(y', y_d)\dd{y'}=-\int_{Q^{d-1}} \Acal' v(y', y_d)\dd{y'}=0$ for $y_d\in (0,1)$. Here, we have use again the periodicity of $v$.
\end{remark}

For the remaining regimes, we prove that a local limit functional cannot exist.
Here, we state only the result for the simultaneous regimes with $\alpha\leq 1$. An analogous statement holds for the successive limits $\eps, \delta\todown 0$, see Section~\ref{sec:epsdelta}.

To make this more precise, for a sequence $\eps_j\todown 0$ and an open set $D\subset \Omega_1$, we set
\begin{align*}
F_0^-(u;D)=\inf \{\liminf_{j\to \infty} \textstyle \int_D f (\frac{x}{\eps_j^\alpha}, \frac{x_d}{\eps_j^{\alpha-1}}, u_j(x))\dd{x}: u_j \in \Ucal_{\Acal_{\eps_j}} (j\in \N), u_j\weakly u \text{ in $L^p(\Omega_1; \R^m)$}\},
\end{align*}
for which we obtain the following result.

\begin{theorem}[Nonlocal example for \boldmath{$\alpha\leq 1$}]\label{theo:simultaneous_nonlocal}
Let $\alpha\leq 1$, $\Omega_1=Q^d$, and $\eps_j\todown 0$ be the sequence with $\eps_j^\alpha=\frac{1}{j}$. 
If $\Acal$ has the properties (A1) and (A2) and is nontrivial in the sense that the kernel of its symbol, $\ker \Abb(\eta)$, is not constant in $\eta\neq 0$ and $\ker\Abb(e_d)\neq \{0\}$ (in particular, $\cA\neq 0$ and $\cA\neq \nabla$), 
then there exist $f:\R^{d}\times \R^m \to \R$ satisfying (H0)-(H5) and constant in $z_d$, as well as a $u_0\in \Ucal_{\Acal_0}$, such 
that the set function $F_0^-(u_0; \frarg)$ is not additive.
\end{theorem}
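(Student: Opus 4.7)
The strategy is to construct explicit $f$ and $u_0$ for which $F_0^-(u_0;\cdot)$ fails additivity, by building two microstructural wells that each achieve vanishing limit energy on one of the halves $D_1 := \omega \times (0,\tfrac12)$, $D_2 := \omega \times (\tfrac12,1)$, but which are mutually incompatible as $\Acal_\eps$-free perturbations across the interface $\{x_d = \tfrac12\}$. Using $\ker \Abb(e_d) = \ker A^{(d)} \neq \{0\}$, pick a nonzero $c \in \ker A^{(d)}$ and set
\[
u_0(x) := a_1 \chi_{D_1}(x) + (a_1+c)\chi_{D_2}(x)
\]
for a constant $a_1 \in \R^m$. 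Since the jump lies in $\ker A^{(d)}$, we have $A^{(d)}_+ \partial_d u_0 = 0$, and $\Acal'_- u_0 = 0$ is trivial, so $u_0 \in \Ucal_{\Acal_0}$. Using the non-constancy of $\eta \mapsto \ker\Abb(\eta)$ together with $\Acal \neq \nabla$, select two horizontal wave vectors $\eta_1, \eta_2 \in \R^{d-1}\setminus\{0\}$ with $\eta_1 \neq \pm\eta_2$, and amplitudes $\xi_j \in \ker\Abb((\eta_j,0))\setminus\{0\}$ that are linearly independent and with at least one of $A^{(d)}\xi_j \neq 0$; a short linear-algebraic argument based on the structural hypotheses on $\Acal$ ensures that such a configuration exists.

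Next, design the energy density $f : \R^{d-1} \times \R^m \to [0,\infty)$ as a convex, nonnegative, $z_d$-independent function satisfying (H0)--(H5), so that the zero set of $f(z',\cdot)$ for each $z'$ is an affine subspace tailored to contain precisely the two ``wells'' $a_1 + \xi_1 h_1(\eta_1 \cdot z')$ and $a_1 + c + \xi_2 h_2(\eta_2 \cdot z')$ (with smooth, $1$-periodic, mean-zero scalars $h_j$), while $f$ is strictly quadratic transverse to this zero set; a small coercivity cushion $\mu|\xi|^p$ with $\mu>0$ takes care of (H4). The design has to be done carefully so that the only $\Acal'$-free periodic microstructures realizing $\fhom^{\Acal'}(a_j) = 0$ are (up to degeneracies) $\phi_j^\eps(x') := \xi_j h_j(\eta_j \cdot x'/\eps^\alpha)$. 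Since $\phi_j^\eps$ is $x_d$-independent and $\xi_j \in \ker\Abb((\eta_j,0))$ kills $\Acal'\phi_j^\eps$, the recovery sequence $u_\eps^{(j)}(x) := a_j + \phi_j^\eps(x')$ lies in $\Ucal_{\Acal_\eps}$, converges weakly in $L^p(D_j;\R^m)$ to $a_j$, and delivers vanishing limit energy on $D_j$; hence $F_0^-(u_0; D_j) = 0$ for $j=1,2$.

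The main obstacle is the strict positivity $F_0^-(u_0; \Omega_1) > 0$. For any $\Acal_\eps$-free $(u_\eps)$ with $u_\eps \weakly u_0$ and $\sup_\eps F_{\eps,\eps^\alpha}(u_\eps) < \infty$, the strict quadratic coercivity of $f$ transverse to the wells forces $u_\eps - u_0$ to approximate, in $L^p$, the piecewise template
\[
v_\eps(x) := \phi_1^\eps(x')\chi_{D_1}(x) + \phi_2^\eps(x')\chi_{D_2}(x),
\]
with an error quantified by the square root of the energy. Now $v_\eps$ is $\Acal'$-free in the bulk but its distributional $\partial_d$-derivative produces a singular contribution on $\{x_d = \tfrac12\}$, yielding
\[
\Acal_\eps v_\eps = \tfrac{1}{\eps}\, A^{(d)}\bigl(\phi_2^\eps - \phi_1^\eps\bigr) \otimes \delta_{\{x_d = 1/2\}}.
\]
Because $\eta_1 \neq \pm\eta_2$, the two oscillatory traces have disjoint nonzero Fourier spectra and cannot cancel, and because $A^{(d)}$ acts nontrivially on at least one of $\xi_1,\xi_2$, a direct Fourier/trace computation produces a lower bound on $\|\Acal_\eps v_\eps\|_{W^{-1,p}(\Omega_1;\R^l)}$ of an order in $\eps$ that cannot be absorbed into a vanishing-energy correction when $\alpha \leq 1$. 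Combining this with the continuity of $\Acal_\eps : L^p \to W^{-1,p}$ (of operator norm $O(1/\eps)$) and the transverse coercivity of $f$ yields a uniform positive lower bound $F_0^-(u_0; \Omega_1) \geq c_0 > 0$. The technical heart of the argument is making this quantitative compensation precise: tracking the $\eps$-scalings so that the obstruction to gluing the two microstructural templates genuinely translates into a positive energy cost, and ruling out subtle cancellations between template, projection correction, and interface layer that could drive the limit energy to zero.
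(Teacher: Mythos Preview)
Your approach diverges from the paper's and has genuine gaps.

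\textbf{The construction does not cover $d=2$.} You require two horizontal wave vectors $\eta_1,\eta_2\in\R^{d-1}\setminus\{0\}$ with $\eta_1\neq\pm\eta_2$; for $d=2$ the horizontal space is one-dimensional and no such pair exists. The theorem is stated for $d\geq 2$, and the paper's construction uses a \emph{single} horizontal direction $n$ (taken to be a standard unit vector $e_i\neq e_d$) together with a four-vector configuration $\xi_1,\xi_2,\sigma_1,\sigma_2$ built from one $v\in\ker\Abb(n)\setminus\ker\Abb(e_d)$ and one $z\in\ker\Abb(e_d)\setminus\{0\}$ (Lemma~\ref{lem:generalized_vectorquartett}).

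\textbf{The rigidity step is not justified.} You assert that low energy forces $u_\eps-u_0$ to approximate the explicit template $v_\eps$ in $L^p$, with error controlled by the energy. But your zero set of $f(z',\cdot)$ is an affine subspace (at least a line through the two wells), so there is a continuum of fields with zero density at each $z'$; you have not ruled out alternative $\Acal'$-free, mean-$a_j$ microstructures living in that zero set. The paper avoids any uniqueness claim: it takes $f$ as the convex hull of a two-well density so that the zero set at each $z'$ is a \emph{segment} with endpoints $\{\xi_i,\sigma_i\}$ (for $i$ depending on the half-cell), averages the putative zero-energy sequence over $j^{d-1}$ shifts to make it $\tfrac{1}{j}$-periodic, rescales to unit period, and then uses a Young-measure argument together with the linear independence of $\xi_1-\sigma_1$ and $\xi_2-\sigma_2$ and the mean constraint $\int_{Q^{d-1}}\tilde v_0\,dx'=u_0$ to pin down the limit as a specific piecewise constant $\tilde v_0$ (Step~3c). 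This identifies the obstruction without ever claiming the microstructure is unique.

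\textbf{The scaling compensation is the whole point and is left open.} You correctly identify that $\Acal_\eps$ has operator norm $O(1/\eps)$, so an $L^p$ error of size $o(1)$ between $u_\eps$ and $u_0+v_\eps$ gives only $o(1/\eps)$ control on $\Acal_\eps(u_\eps-u_0-v_\eps)$, which does not beat the interface contribution. The paper handles this by a rescaling identity: after averaging and the change of variables $x'\mapsto x'/j$, one has
\[
\norm{\Acal_{\eps_j} v_j^\sharp}_{W^{-1,q}(\T^{d-1}\times(0,1))}\ \geq\ \norm{\Acal_{\eps_j^{1-\alpha}}\tilde v_j}_{W^{-1,q}(\T^{d-1}\times(0,1))},
\]
and for $\alpha\leq 1$ the right-hand operator is either $\Acal$ (if $\alpha=1$) or has diverging coefficient in front of $A^{(d)}\partial_d$, so Lemma~\ref{lem:convergence_AepsA0} reduces the question to $\Acal_0\tilde v_0\neq 0$, which follows from $\xi_i-\sigma_i\notin\ker\Abb(e_d)$. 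Your Fourier/trace sketch does not supply a comparable mechanism.
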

An explicit counterexample to the additivity of $F_0^-(u_0;\frarg)$ is constructed in Section~\ref{sec:alpha<1}.
Hence, $\Gamma$-$\liminf_{\eps\to0} F_{\eps, \eps^\alpha}$ is nonlocal for $\alpha\leq 1$, which carries over to the corresponding $\Gamma$-limit (if it exists). 


\section{Thin-film limit of the homogenized problem}\label{sec:deltaeps}
This paragraph is devoted to the proof of Theorem~\ref{theo:successive_local}, or in other words to the identification of the successive $\Gamma$-limit
\begin{align}\label{consecutiveGamma}
\Gamma\text{-}\lim_{\eps\to 0}[\Gamma\text{-}\lim_{\delta\to 0} F_{\eps, \delta}],
\end{align}  
see~\eqref{Fepsdelta} for the definition of the family of functionals $(F_{\eps, \delta})$.

We join the results of Theorem~\ref{theo:hom} on homogenization and Theorem~\ref{theo:dimred} on dimension reduction in the context of $\Acal$-free fields to derive a local characterization for\eqref{consecutiveGamma}.

\begin{proof}[Proof of Theorem~\ref{theo:successive_local}]
Let $\eps>0$ be fixed. We rewrite $F_{\eps, \delta}$ with $\delta>0$ as
\begin{align*}
F_{\eps, \delta}(u) = 
\begin{cases}\int_{\Omega_1} f_\eps (\frac{x}{\delta}, u(x))\dd{x} & \text{if $\Acal_\eps u=0$ in $\Omega_1$,} \\ \infty & \text{else,}\end{cases}\qquad u\in L^p(\Omega_1;\R^m),
\end{align*}
where $f_\eps(z, \xi):=f(z', \eps z_d, \xi)$ for $z\in \R^d$ and $\xi\in \R^m$. Observe that $f_\eps(\frarg, \xi)$ is $E_\eps$-periodic for every $\xi\in \R^m$ with $E_\eps:=Q^{d-1}\times (0, 1/\eps)$. Moreover, $f_\eps$ fulfills (H0), (H3), and (H4). 
Hence, $f_\eps$ satisfies the requirements of Theorem~\ref{theo:hom} with the periodic cell $Q^d$ replaced by $E_\eps$. Since $f_\eps$ is convex in the second variable, the homogenized integrand can be expressed using the cell formula (compare Remark~\ref{rem:multicell}\,b)), so we infer from Theorem~\ref{theo:hom} that
\begin{align*}
F_{\eps}(u):=\Gamma\text{-}\lim_{\delta \to 0} F_{\eps, \delta}(u) = 
\begin{cases}\int_{\Omega_1}  f_{\rm hom}^{\eps}(u)\dd{x} & \text{if $\Acal_\eps u=0$ in $\Omega_1$,} \\ \infty & \text{else}\end{cases}\end{align*}
with
\begin{align*}
f_{\rm hom}^\eps(\xi):= \inf_{w\in \Vcal_\eps} \dashint_{E_\eps} f_\eps(z, \xi +w(z))\dd{z}
\end{align*} for $\xi\in \R^m$ 
and $\Vcal_\eps:=\{w\in L^p(E_\eps;\R^m): w\in \ker_{\T^d(E_\eps)}\Acal_\eps , \int_{E_\eps} w\dd{z}=0\}$.
After a scaling argument, one finds that 
\begin{align}\label{fepshom=fhom}
f_{\rm hom}^\eps=\fhom\qquad\text{ for all $\eps>0$}.
\end{align} 
Indeed, if $w(z)=v(y)$ for $z\in E_\eps$ with $y=(z', \eps z_d)\in Q^d$, then $v\in \Vcal_{\Acal}$ if and only if $w\in \Vcal_\eps$, and~\eqref{fepshom=fhom} follows from
\begin{align*}
f_{\rm hom}^\eps(\xi)= \inf_{w\in \Vcal_{\eps}}\dashint_{E_\eps} f(z', \eps z_d, \xi+w(z))\dd{z} = \inf_{v\in \Vcal_\Acal} \dashint_{Q^d}f(y, \xi+v(y))\dd{y}=\fhom(\xi)
\end{align*}
for $\xi\in \R^m$.

Finally, in view of~\eqref{fepshom=fhom} the characterization of the thin-film limit $\Gamma$-$\lim_{\eps\to0}F_\eps$ is an immediate consequence of Theorem~\ref{theo:dimred}, as $\fhom$ is convex and has $p$-growth and $p$-coercivity by Remark~\ref{rem:multicell}\,a) and b).
\end{proof}

\section{Thin films with fine heterogeneity ($\alpha>1$)}\label{sec:alpha>1}
The ultimate goal of this section is the proof of Theorem~\ref{theo:simultaneous_local} on the characterization of $\Gamma$-$\lim_{\eps \to 0} F_{\eps, \eps^\alpha}$ for $\alpha>1$. This will be a direct consequence of joining the results on the lower bound and the construction of a suitable recovery sequence, which are provided in Proposition~\ref{prop:lowerbound} and Proposition~\ref{prop:upperboundA}, respectively.

\subsection{Lower bound}
The proof of the lower bound relies on arguments from multiscale convergence, see e.g.~\cite{AllaireBriane96}, which is a natural generalization of the concept of two-scale convergence introduced by Nguetseng~\cite{Nguetseng1989} and Allaire~\cite{Allaire}.
The following definition is a special type of three-scale convergence adapted to the context of this paper.

\begin{definition}[Reduced weak three-scale convergence]
Let  $(u_\eps)\subset L^p(\Omega_1;\R^m)$,  $w\in L^p(\Omega_1\times Q^d;\R^m)$, and $\alpha>1$. We say that
$(u_\eps)$ weakly three-scale converges to $w$ in a reduced sense, or $u_\eps \alphaweakly w$ in $L^p(\Omega_1;\R^m)$,
if
\begin{align*}
\lim_{\eps\to 0} \int_{\Omega_1} u_\eps(x) \cdot \varphi\Bigl(x, \frac{x'}{\eps^\alpha}, \frac{x_d}{\eps^{\alpha-1}}\Bigr)\dd{x} = \int_{\Omega_1}\int_{Q^d}w(x, y)\varphi(x,y)\dd{y}\dd{x}
\end{align*}
for every $\varphi\in L^{p'}(\Omega_1;C^\infty(\T^d;\R^m))$.
\end{definition}
Bounded sequences are compact regarding reduced weak three-scale convergence, as stated in the following. 
The proof requires a slight modification of~\cite[Theorem~1.2]{Allaire} for the classical weak two-scale convergence, compare also~\cite[Theorem~2.4]{AllaireBriane96}. 
\begin{lemma}[Compactness]\label{lem:compactness_threescale}
Let $\alpha>1$ and $(u_\eps)$ be a uniformly bounded sequence in $L^p(\Omega_1;\R^m)$. Then there exists a subsequence (not relabeled) of $(u_\eps)$ and $w\in L^p(\Omega_1\times Q^d;\R^m)$ such that 
\begin{align*}
u_{\eps}\alphaweakly w \qquad \text{in $L^p(\Omega_1;\R^m)$.}
\end{align*}
\end{lemma}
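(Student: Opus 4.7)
The plan is to adapt the classical two-scale compactness argument of Allaire and Nguetseng (see also Allaire--Briane \cite{AllaireBriane96} for the multi-scale case) to the present reduced setting, where the two microscales $\eps^\alpha$ and $\eps^{\alpha-1}$ act in orthogonal coordinate directions. First, I would introduce the sequence of linear functionals $L_\eps: L^{p'}(\Omega_1; C(\T^d; \R^m)) \to \R$ defined by
\[
L_\eps(\varphi) := \int_{\Omega_1} u_\eps(x)\cdot \varphi\Bigl(x, \frac{x'}{\eps^\alpha}, \frac{x_d}{\eps^{\alpha-1}}\Bigr)\dd{x},
\]
and observe via H\"older's inequality and the trivial bound $|\varphi(x,y)| \leq \|\varphi(x,\cdot)\|_{C(\T^d;\R^m)}$ that $|L_\eps(\varphi)| \leq \|u_\eps\|_{L^p} \|\varphi\|_{L^{p'}(\Omega_1; C(\T^d;\R^m))} \leq C\|\varphi\|_{L^{p'}(\Omega_1; C(\T^d;\R^m))}$ uniformly in $\eps$. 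Exploiting the separability of $L^{p'}(\Omega_1; C(\T^d;\R^m))$, a diagonal argument then extracts a (not relabeled) subsequence along which $L_\eps(\varphi)$ converges to some limit $L(\varphi)$ for every admissible test function $\varphi$.

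The heart of the proof is a Riemann--Lebesgue type lemma: for every $g\in L^1(\Omega_1; C(\T^d))$,
\[
\int_{\Omega_1} g\Bigl(x, \frac{x'}{\eps^\alpha}, \frac{x_d}{\eps^{\alpha-1}}\Bigr)\dd{x} \to \int_{\Omega_1}\int_{Q^d} g(x,y)\dd{y}\dd{x}
\]
as $\eps \to 0$. Applied to $g = |\varphi|^{p'}$ this yields the asymptotically sharp upper bound $\limsup_{\eps\to 0}\|\varphi(\cdot,\cdot/\eps^\alpha,\cdot/\eps^{\alpha-1})\|_{L^{p'}(\Omega_1;\R^m)} \leq \|\varphi\|_{L^{p'}(\Omega_1\times Q^d;\R^m)}$. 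By density it suffices to verify the Riemann--Lebesgue identity for $g(x,y) = \mathbf{1}_A(x)\, e^{2\pi\ii k\cdot y}$, $k=(k',k_d)\in \Z^d$. For $k=0$ the statement is trivial, while for $k\neq 0$ the oscillating factor equals $\exp(2\pi\ii k'\cdot x'/\eps^\alpha) \exp(2\pi\ii k_d x_d/\eps^{\alpha-1})$; since the oscillations take place in disjoint groups of coordinates $x'$ and $x_d$, a direct application of Fubini and the one-dimensional Riemann--Lebesgue lemma shows that this sequence converges weakly-$\ast$ to $0$ in $L^\infty(\Omega_1)$, regardless of whether $k'$ or $k_d$ (or both) are nonzero.

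Combining the uniform bound with the Riemann--Lebesgue lemma gives $|L(\varphi)| \leq C\|\varphi\|_{L^{p'}(\Omega_1\times Q^d;\R^m)}$. By density of $L^{p'}(\Omega_1; C(\T^d;\R^m))$ in $L^{p'}(\Omega_1\times Q^d;\R^m)$, the functional $L$ extends uniquely to a bounded linear functional on $L^{p'}(\Omega_1\times Q^d;\R^m)$, hence by the Riesz representation theorem there exists $w\in L^p(\Omega_1\times Q^d;\R^m)$ with $L(\varphi) = \int_{\Omega_1}\int_{Q^d} w(x,y)\cdot \varphi(x,y)\dd{y}\dd{x}$ for all $\varphi$. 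This identifies the three-scale limit and shows $u_\eps \alphaweakly w$ along the extracted subsequence.

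The main obstacle is the Riemann--Lebesgue step, since the two scales $\eps^\alpha$ and $\eps^{\alpha-1}$ both vanish (as $\alpha>1$) but at different rates. The fact that they act in orthogonal coordinate directions is crucial: it allows one to decouple the oscillations via Fubini and reduces matters to the one-dimensional case, so no genuine ``multi-scale'' interaction between the two frequencies needs to be controlled. This is essentially the only place where the geometric separation of the two scales enters, and it would fail if both scales acted in the same direction with rate ratios not tending to $0$ or $\infty$.
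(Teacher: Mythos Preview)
Your proposal is correct and follows precisely the approach the paper indicates: a slight modification of Allaire's classical two-scale compactness argument \cite{Allaire} (compare also \cite{AllaireBriane96}), adapted to the reduced setting with two scales acting in orthogonal coordinate directions. Your observation that the orthogonal separation of the scales $\eps^\alpha$ (in $x'$) and $\eps^{\alpha-1}$ (in $x_d$) allows a clean Fubini-based Riemann--Lebesgue argument, avoiding the need to control genuine scale interactions, is exactly the ``slight modification'' alluded to in the paper.
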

The next lemma is a straightforward adaption of~\cite[Lemma~4.4]{FoZa03}.
\begin{lemma}[Lower semicontinuity]\label{lem:lsc_threescale}
\hspace{-0.1cm}Let $f$ satisfy (H0)-(H5) and let $(u_\eps)\subset L^p(\Omega_1;\R^m)$ be such that $u_\eps \alphaweakly w$ in $L^p(\Omega_1;\R^m)$ with $\alpha>1$ and $w\in L^p(\Omega_1\times Q^{d};\R^m)$.
Then,
\begin{align*}
\liminf_{\eps\to 0}\int_{\Omega_1}f\Bigl(\frac{x'}{\eps^\alpha}, \frac{x_d}{\eps^{\alpha-1}}, u_\eps(x)\Bigr)\dd{x} &\geq \int_{\Omega_1} \int_{Q^{d}} f(y, w(x,y))\dd{y}\dd{x}.
\end{align*}
\end{lemma}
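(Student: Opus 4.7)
The plan is to prove the inequality via the standard convex subdifferential trick, combined with the defining property of reduced weak three-scale convergence. By (H1) and (H5), $f(z,\cdot)$ is convex and differentiable, so for a.e.\ $z\in\R^d$ and all $\xi,\eta\in\R^m$,
\begin{equation*}
  f(z,\xi)\geq f(z,\eta)+\partial_\xi f(z,\eta)\cdot(\xi-\eta).
\end{equation*}
I would apply this pointwise with $z=z_\eps(x):=(x'/\eps^\alpha,x_d/\eps^{\alpha-1})$, $\xi=u_\eps(x)$, and $\eta=\varphi(x,z_\eps(x))$ for test functions $\varphi$ taken from a dense subclass of $L^{p'}(\Omega_1;C^\infty(\T^d;\R^m))$, e.g.\ finite linear combinations of products $\psi(x)\chi(y)$ with $\psi\in C_c(\Omega_1;\R^m)$ and $\chi\in C^\infty(\T^d)$. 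Integrating over $\Omega_1$ and rearranging gives
\begin{equation*}
  \int_{\Omega_1} f(z_\eps,u_\eps)\dd x \geq \int_{\Omega_1} f(z_\eps,\varphi(x,z_\eps))\dd x + \int_{\Omega_1} \partial_\xi f(z_\eps,\varphi(x,z_\eps))\cdot \bigl(u_\eps-\varphi(x,z_\eps)\bigr)\dd x.
\end{equation*}

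The next step is to pass to the $\liminf$ as $\eps\to 0$ term by term on the right-hand side. For the first integral, a Riemann--Lebesgue type argument applies because $(x,y)\mapsto f(y,\varphi(x,y))$ lies in $L^1(\Omega_1;C(\T^d))$ by (H0), (H3) and the boundedness of $\varphi$ on compact sets, yielding convergence to $\int_{\Omega_1}\int_{Q^d} f(y,\varphi(x,y))\dd y\dd x$. For the second integral, the $p$-Lipschitz property from Remark~\ref{rem:ucont} delivers the growth bound $\abs{\partial_\xi f(z,\eta)}\leq c(1+\abs{\eta})^{p-1}$, so $(x,y)\mapsto \partial_\xi f(y,\varphi(x,y))$ belongs to $L^{p'}(\Omega_1;C(\T^d;\R^m))$; after the standard density upgrade from smooth to continuous test functions in $y$, this is admissible in the definition of $u_\eps\alphaweakly w$. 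Combining this with the Riemann--Lebesgue convergence of $\varphi(x,z_\eps)$ to its two-scale limit leads to
\begin{equation*}
  \liminf_{\eps\to 0}\int_{\Omega_1} f(z_\eps,u_\eps)\dd x \geq \int_{\Omega_1}\int_{Q^d}\bigl[f(y,\varphi(x,y))+\partial_\xi f(y,\varphi(x,y))\cdot(w(x,y)-\varphi(x,y))\bigr]\dd y\dd x.
\end{equation*}

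The proof would be completed by a supremum/density argument: the integrand on the right-hand side is an affine tangent lower bound to the convex map $\eta\mapsto f(y,\eta)$ at $\eta=\varphi(x,y)$, and equals $f(y,w(x,y))$ at $\varphi(x,y)=w(x,y)$. Approximating $w\in L^p(\Omega_1\times Q^d;\R^m)$ by functions from the dense class of $\varphi$'s and exploiting the $p$-Lipschitz continuity of $f$ (via H\"older with the bound on $\partial_\xi f$) one recovers $\int_{\Omega_1}\int_{Q^d} f(y,w(x,y))\dd y\dd x$ in the limit. The main technical obstacle I would expect is making the test-function classes fit together cleanly: namely, justifying that the composition $\partial_\xi f(y,\varphi(x,y))$ is an admissible three-scale test function (it is only continuous, not smooth, in $y$), and that the chosen dense subclass of $\varphi$'s is rich enough to saturate the inequality. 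Both points are handled by standard approximation arguments of the type already used in the two-scale literature underpinning this adaptation.
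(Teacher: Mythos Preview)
Your approach is correct and is precisely the standard argument the paper is invoking by citing \cite[Lemma~4.4]{FoZa03} (which in turn generalizes \cite[Theorem~1.8]{Allaire}); the paper offers no independent proof. One small correction: under (H0)--(H1) the maps $z\mapsto f(z,\varphi(x,z))$ and $z\mapsto \partial_\xi f(z,\varphi(x,z))$ are only \emph{measurable}, not continuous, in $z$, so the technical obstacle you flag is slightly larger than stated---but it is still resolved by the usual extension of the admissible test-function class (e.g.\ to Carath\'eodory integrands via approximation), exactly as in the two-scale literature you reference.
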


Next, we derive necessary conditions on the asymptotic behavior of admissible fields, i.e.~$\Acal_\eps$-free fields, regarding reduced weak three-scale convergence.
\begin{lemma}[Properties of reduced weak three-scale limits of $\Acal_\eps$-free sequences]\label{lem:char_threescale}
Let $(A1)$ and $(A2)$ hold, and let $\alpha>1$. Suppose $u_\eps \in \Ucal_{\Acal_\eps}$ $(\eps>0)$ is such that $u_\eps \alphaweakly w$ in $L^p(\Omega_1;\R^m)$  for $w\in L^p(\Omega_1\times Q^d; \R^m)$. 
Then, 
\begin{align*}
\Acal_0^x \bar{w}=0 \text{ in $\Omega_1$}\qquad \text{and} \qquad \Acal^y w(x, \frarg)=0\text{ in $\T^d$ for almost every $x\in \Omega_1$},
\end{align*}
where $\bar{w}(x)=\int_{Q^d} w(x,y)\dd{y}$ for $x\in \Omega_1$. 
\end{lemma}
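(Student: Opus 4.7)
The plan is to test the constraint $\cA_\eps u_\eps = 0$ against two carefully chosen families of test functions, which isolate the microscopic and macroscopic limit conditions respectively. The hypothesis $\alpha > 1$ enters in both steps to absorb the singular $\frac{1}{\eps}A^{(d)}\partial_d$ part of $\cA_\eps$.

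For the microscopic condition $\cA^y w(x,\cdot)=0$, I would take
$$\varphi_\eps(x):=\eps^\alpha\,\psi(x)\,\phi\!\left(\frac{x'}{\eps^\alpha},\frac{x_d}{\eps^{\alpha-1}}\right),\qquad \psi\in C_c^\infty(\Omega_1),\ \phi\in C^\infty(\T^d;\R^l).$$
Applying the product and chain rules, the fast $x'$-derivatives contribute factors $\eps^{-\alpha}$ that exactly cancel the amplitude $\eps^\alpha$, while the singular $\frac{1}{\eps}$ in front of $A^{(d)}\partial_d$ combines with $\eps^\alpha\cdot\eps^{1-\alpha}$ from the fast $x_d$-derivative to give a clean $(A^{(d)})^T\partial_d^y\phi$ contribution. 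Everything else carries a positive power of $\eps$, the worst being $\eps^{\alpha-1}$, so that
$$\cA_\eps^T\varphi_\eps(x)=\psi(x)\,(\cA^{y,T}\phi)\!\left(\tfrac{x'}{\eps^\alpha},\tfrac{x_d}{\eps^{\alpha-1}}\right)+r_\eps(x),\qquad \|r_\eps\|_{L^\infty}=O(\eps^{\alpha-1}).$$
Since $\alpha>1$, the pairing of $r_\eps$ with the bounded sequence $(u_\eps)$ vanishes, and the reduced weak three-scale convergence applied to the admissible test function $\psi(x)\,\cA^{y,T}\phi(y)\in L^{p'}(\Omega_1;C^\infty(\T^d;\R^m))$ gives
$$0=\lim_{\eps\to 0}\int_{\Omega_1}u_\eps\cdot\cA_\eps^T\varphi_\eps\,dx=\int_{\Omega_1}\!\int_{Q^d}w(x,y)\cdot\psi(x)\,\cA^{y,T}\phi(y)\,dy\,dx.$$
Varying $\psi$ and $\phi$ then yields $\cA^y w(x,\cdot)=0$ in $\T^d$ for a.e.\ $x\in\Omega_1$.

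For the macroscopic condition $\cA_0\bar w=0$, write $\psi=(\psi_+,\psi_-)\in C_c^\infty(\Omega_1;\R^l)$ according to the block decomposition in (A2), with $\psi_+\in\R^r$ and $\psi_-\in\R^{l-r}$, and test against the slowly varying function $\psi_\eps(x):=(\eps\,\psi_+(x),\,\psi_-(x))$. By (A2), $(A^{(d)})^T$ annihilates the lower block, so the a priori singular term collapses to $\frac{1}{\eps}(A^{(d)})^T\partial_d\psi_\eps=(A_+^{(d)})^T\partial_d\psi_+$; together with the unscaled lower-block contributions $(\cA'_-)^T\psi_-$ from $\cA'$ this reproduces exactly $\cA_0^T\psi$ (cf.~\eqref{intro:Acal0}), while the upper-block contributions from $\cA'$ remain $\eps$-scaled. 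Hence
$$\cA_\eps^T\psi_\eps=\cA_0^T\psi+O(\eps)\quad \text{in } L^\infty(\Omega_1;\R^m).$$
The reduced three-scale convergence applied to the $y$-independent test function $\cA_0^T\psi$ implies in particular $u_\eps\weakly\bar w$ weakly in $L^p(\Omega_1;\R^m)$, so letting $\eps\to 0$ in $0=\int u_\eps\cdot\cA_\eps^T\psi_\eps\,dx$ yields
$$\int_{\Omega_1}\bar w\cdot\cA_0^T\psi\,dx=0\qquad \text{for every } \psi\in C_c^\infty(\Omega_1;\R^l),$$
which is the distributional statement $\cA_0\bar w=0$ in $\Omega_1$.

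The only real obstacle is careful bookkeeping: one has to absorb the singular $\frac{1}{\eps}$ in $\cA_\eps$ either by the amplitude $\eps^\alpha$ on the oscillating test function (possible precisely because $\alpha>1$) or, for slowly varying test functions, by the $\eps$-scaling of $\psi_+$, whose effect on the resulting macroscopic symbol is exactly neutralized by the normalization (A2). No compensated compactness or projection tool beyond the standard two-scale style manipulation is needed.
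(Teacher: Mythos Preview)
Your proof is correct. The microscopic part is essentially identical to the paper's argument: both multiply the oscillating test function by $\eps^\alpha$, compute $\cA_\eps^T$, and observe that the leading term is $\psi(x)(\cA^{y})^T\phi$ while the remainder is $O(\eps^{\alpha-1})$, which vanishes precisely because $\alpha>1$.

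For the macroscopic condition $\cA_0\bar w=0$, you take a slightly different route. The paper simply notes that reduced three-scale convergence implies weak $L^p$-convergence $u_\eps\weakly\bar w$ and then invokes the already established Lemma~\ref{lem:A0}\,(i) (from \cite{KR14}), which says that weak limits of $\cA_\eps$-free sequences are $\cA_0$-free. Your argument is more self-contained: you build the scaled test function $\psi_\eps=(\eps\psi_+,\psi_-)$ by hand using the normalization (A2) and verify directly that $\cA_\eps^T\psi_\eps=\cA_0^T\psi+O(\eps)$. This is in effect the computation underlying Lemma~\ref{lem:A0}\,(i), so the two proofs coincide in substance; yours avoids the black-box citation at the price of reproducing part of that lemma, while the paper's version is more economical.
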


\begin{proof} 
As $u_\eps\alphaweakly w$ in $L^p(\Omega_1;\R^m)$ particularly implies that $u_\eps \weakly \bar{w}$ in $L^p(\Omega_1;\R^m)$, we conclude that $\bar{w}\in \Ucal_{\Acal_0}$. Recall that weak limits of $\Acal_\eps$-free sequences are $\Acal_0$-free by~Lemma~\ref{lem:A0}.
 
Let $\psi\in C_c^\infty(\Omega_1)$ and $\phi\in C^\infty(\T^d;\R^l)$ and set $\varphi_\eps(x):=\psi(x)\phi(\frac{x'}{\eps^\alpha}, \frac{x_d}{\eps^{\alpha-1}})$ for $x\in \Omega_1$. From the fact that $\Acal_{\eps}u_\eps=0$ in $\Omega_1$ for every $\eps>0$, we infer that
\begin{align*}
0&= \int_{\Omega_1} u_\eps(x) \cdot \eps^\alpha \Acal^T_\eps \varphi_\eps(x)\dd{x}\nonumber\\ &=
\int_{\Omega_1} u_\eps(x)\cdot \eps^\alpha \Big[\frac{1}{\eps^\alpha}(\Acal')^T\phi\Bigl(\frac{x'}{\eps^\alpha}, \frac{x_d}{\eps^{\alpha-1}}\Bigr) + \frac{1}{\eps}\frac{1}{\eps^{\alpha-1}} (A^{(d)})^T\partial_d\phi\Bigl(\frac{x'}{\eps^\alpha}, \frac{x_d}{\eps^{\alpha-1}}\Bigr)\Big] \psi(x) \\
&\qquad\quad+ u_\eps(x)\cdot \eps^\alpha \Big[\sum_{k=1}^{d-1}(A^{(k)})^T \phi\Bigl(\frac{x'}{\eps^\alpha}, \frac{x_d}{\eps^{\alpha-1}}\Bigr)\partial_k \psi(x) + \frac{1}{\eps} (A^{(d)})^T \phi\Bigl(\frac{x'}{\eps^\alpha}, \frac{x_d}{\eps^{\alpha-1}}\Bigr) \partial_d \psi(x)\Big]\dd{x}.\nonumber
\end{align*}
By letting $\eps$ tend to zero in the above equality, we derive from the reduced weak three-scale convergence of $(u_\eps)$ that
\begin{align*}
0&=\lim_{\eps\to 0} \int_{\Omega_1} u_\eps(x) \cdot \Acal^T\phi\Bigl(\frac{x'}{\eps^\alpha}, \frac{x_d}{\eps^{\alpha-1}}\Bigr)\psi(x)\dd{x}=\int_{\Omega_1}\int_{Q^d} w(x,y) \cdot \Acal^T\phi(y)\psi(x)
\dd{y}\dd{x}\\ &= \int_{\Omega_1}\Bigr(\int_{Q^d} w(x,y) \cdot \Acal^T\phi(y)\dd{y}\Bigl) \psi(x)\dd{x}.
\end{align*}
Since $\psi\in C_c^\infty(\Omega_1)$ is an arbitrary test function, one obtains $\int_{Q^d} w(x,y) \cdot \Acal^T\phi(y)\dd{y}=0$ for almost very~$x\in \Omega_1$ and all $\phi\in C^\infty(\T^d;\R^l)$, meaning that 
\begin{align*}
\Acal^y w(x, \frarg)=0 \quad\text{ in $\T^d$ for almost every $x\in \Omega_1$.}
\end{align*}
\end{proof}

Building on the previous lemma, we can now prove the desired liminf-inequality.
\begin{proposition}[Lower bound]\label{prop:lowerbound}
Let $\alpha>1$, and assume that (H0)--(H5) and (A1)-(A2) hold. If $u_\eps\in \Ucal_{\Acal_\eps}$ $(\eps>0)$ such that $u_\eps\weakly u$ in $L^p(\Omega_1;\R^m)$, then $u\in \Ucal_{\Acal_0}$ and
\begin{align*}
\liminf_{\eps\to 0}\int_{\Omega_1} f\Bigl(\frac{x'}{\eps^\alpha}, \frac{x_d}{\eps^{\alpha-1}},u_\eps(x)\Bigr)\dd{x}\geq \int_{\Omega_1} \fhom (u(x))\dd{x}.
\end{align*}
\end{proposition}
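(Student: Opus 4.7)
The plan is to combine the three-scale compactness and lower semicontinuity Lemmas~\ref{lem:compactness_threescale} and~\ref{lem:lsc_threescale} with the characterization of three-scale limits of $\Acal_\eps$-free sequences in Lemma~\ref{lem:char_threescale}, and finally invoke the cell formula for $\fhom$ available under (H5) (cf.\ Remark~\ref{rem:multicell}\,c)). The fact that $u\in \Ucal_{\Acal_0}$ is immediate from Lemma~\ref{lem:A0}\,(i).

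To begin, I would pass to a subsequence along which the $\liminf$ on the left-hand side is attained as an actual limit and is finite (otherwise there is nothing to show); by (H4) the sequence $(u_\eps)$ is then bounded in $L^p(\Omega_1;\R^m)$. Applying Lemma~\ref{lem:compactness_threescale}, I extract a further subsequence (not relabeled) such that $u_\eps \alphaweakly w$ in $L^p(\Omega_1;\R^m)$ for some $w\in L^p(\Omega_1\times Q^d;\R^m)$. Testing the definition of reduced weak three-scale convergence against functions $\varphi(x,y)=\psi(x)$ shows $\bar w(x):=\int_{Q^d} w(x,y)\dd{y}=u(x)$ a.e.\ on $\Omega_1$. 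Lemma~\ref{lem:char_threescale} then gives $\Acal^y w(x,\frarg)=0$ in $\T^d$ for a.e.\ $x\in\Omega_1$.

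Next, Lemma~\ref{lem:lsc_threescale} yields
\begin{align*}
\liminf_{\eps\to 0}\int_{\Omega_1} f\Bigl(\tfrac{x'}{\eps^\alpha},\tfrac{x_d}{\eps^{\alpha-1}}, u_\eps(x)\Bigr)\dd{x}
\geq \int_{\Omega_1}\int_{Q^d} f\bigl(y, w(x,y)\bigr)\dd{y}\dd{x}.
\end{align*}
For a.e.\ fixed $x\in\Omega_1$, the field $v_x(y):=w(x,y)-u(x)$ lies in $\Vcal_\Acal$: it has mean zero by $\bar w(x)=u(x)$, and $\Acal v_x=\Acal^y w(x,\frarg)-\Acal(u(x))=0$ in $\T^d$ since $u(x)$ is constant in $y$. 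Since $f(z,\frarg)$ is convex by (H5), the multicell formula reduces to the single-cell formula, so by Remark~\ref{rem:multicell}\,c) we obtain
\begin{align*}
\int_{Q^d} f\bigl(y, w(x,y)\bigr)\dd{y}=\int_{Q^d} f\bigl(y, u(x)+v_x(y)\bigr)\dd{y}\geq \fhom\bigl(u(x)\bigr).
\end{align*}
Integrating over $\Omega_1$ and combining with the previous display delivers the desired inequality along the chosen subsequence. A standard subsequence-of-subsequence argument (selecting any sequence that attains the original $\liminf$ and applying the above to a further three-scale convergent subsequence) upgrades the bound to the original sequence, completing the proof.

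I do not expect any serious obstacle: all the heavy lifting has been isolated in the three preparatory lemmas. The only subtle points are (a)~identifying $\bar w$ with $u$ by testing against $x$-dependent test functions that are constant in $y$, which uses the definition of three-scale convergence directly, and (b)~verifying that the shifted field $v_x$ is genuinely $\Acal$-free with zero mean for a.e.\ $x$ so that the cell formula applies; both are routine given the setup.
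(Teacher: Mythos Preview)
Your proposal is correct and follows essentially the same route as the paper: extract a reduced weak three-scale limit $w$ via compactness, apply the lower semicontinuity lemma, use the characterization lemma to see that $w(x,\cdot)-u(x)\in\Vcal_\Acal$ for a.e.\ $x$, and then invoke the cell formula for $\fhom$. One minor redundancy: boundedness of $(u_\eps)$ in $L^p$ already follows from the assumed weak convergence, so the appeal to (H4) is unnecessary (though harmless).
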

\begin{proof}
Let $w\in L^p(\Omega_1\times Q^d;\R^m)$ denote the reduced weak three-scale limit of the sequence $(u_\eps)$, which exists, possibly after passing to a subsequence,  according to Lemma~\ref{lem:compactness_threescale}. Then, Lemma~\ref{lem:lsc_threescale} yields
\begin{align*}
&\liminf_{\eps\to 0} \int_{\Omega_1} f\Bigl(\frac{x'}{\eps^\alpha},\frac{x_d}{\eps^{\alpha-1}}, u_\eps(x)\Bigr)\dd{x} \geq \int_{\Omega_1}\int_{Q^d} f(y, w(x,y))\dd{y}\dd{x} \\ 
&\qquad = \int_{\Omega_1}\dashint_{Q^{d}} f(y, u(x) + (w(x,y)-u(x)))\dd{y}\dd{x}
\geq  \int_{\Omega_1} \fhom(u(x)) \dd{x} .
\end{align*}
In the last estimate, we have used that $w(x, \frarg)-u(x)\in \ker_{\T^{d}}\Acal$ for almost every $x\in \Omega_1$ by Lemma~\ref{lem:char_threescale}, as well as $\int_{Q^d}w(x,y)\dd{y}=\bar{w}(x)=u(x)$ for $x\in \Omega_1$. Thus, $w(x, \frarg) -u(x)\in \Vcal_{\Acal}$ for almost every $x\in \Omega_1$.
\end{proof}

\begin{remark}[A lower bound for \boldmath{$\alpha>0$}]\label{rem:upperbound_twoscale}
a) For $\alpha>0$, a statement and proof along the lines of Lemma~\ref{lem:char_threescale} reveals that classical weak two-scale limits (with respect to the scales $x$ and $x/\eps^\alpha$) of $\Acal_\eps$-free fields are necessarily $\Acal_0$-free, both in the slow and the fast variable.
Precisely, let $u_\eps \in \Ucal_{\Acal_\eps}$ $(\eps>0)$ such that $u_\eps\stackrel{2\text{-}s^\alpha}{\weakly} w$ in $L^p(\Omega_1;\R^m)$ with $w\in L^p(\Omega_1\times Q^d; \R^m)$, i.e.~
\begin{align*}
\lim_{\eps\to 0} \int_{\Omega_1} u_\eps(x) \cdot \varphi\Bigl(x, \frac{x}{\eps^\alpha}\Bigr)\dd{x} = \int_{\Omega_1}\int_{Q^d}w(x, y)\varphi(x,y)\dd{y}\dd{x}
\end{align*}
for every $\varphi\in L^{p'}(\Omega_1;C^\infty(\T^d;\R^m))$,
then
\begin{align}\label{A0_fastslow}
\Acal_0^x \bar{w}=0 \text{ in $\Omega_1$}\qquad \text{and} \qquad \Acal_0^y w(x, \frarg)=0\text{ in $\T^d$ for almost every $x\in \Omega_1$}.
\end{align}
b) As a consequence of a), if (A1)-(A2) and (H0)-(H5) hold, and $f$ is constant in $z_d$~(compare~Remark~\ref{lem:properties_fhom}\,a)), then 
\begin{align}\label{lowerbound2}
\liminf_{\eps\to 0}\int_{\Omega_1} f\Bigl(\frac{x'}{\eps^\alpha}, u_\eps(x)\Bigr)\dd{x}\geq \int_{\Omega_1} \fhomzero (u(x))\dd{x}
\end{align}
for all sequences $u_\eps\in \Ucal_{\Acal_\eps}$ $(\eps>0)$ with $u_\eps\weakly u$ in $L^p(\Omega_1;\R^m)$, where 
\begin{align*}
\fhomzero(\xi) = \inf_{v\in \Vcal_{\Acal_0}} \dashint_{Q^{d}} f(y, \xi+v(y))\dd{y}, \qquad \xi\in \R^m,
\end{align*}
and $\Vcal_{\Acal_0}=\{v\in L^p(Q^{d};\R^m): v\in \ker_{\T^{d}}\Acal_0, \int_{Q^{d}}v\dd{y}=0\}$.  To obtain this lower bound one can proceed as in the proof of Proposition~\ref{prop:lowerbound}, replacing the lower semicontinuity result in~Lemma~\ref{lem:lsc_threescale} with a two-scale version of~\cite[Lemma~4.4]{FoZa03}, which again is essentially a generalization of~\cite[Theorem~1.8]{Allaire}.

We observe that
\begin{align*}
\fhom=\fhomprime \geq \fhomzero.
\end{align*}
In view of Remark~\ref{lem:properties_fhom}\,a), this follows directly from $\Vcal_{\Acal'}\subset \Vcal_{\Acal_0}$ by identifying $L^p(Q^{d-1};\R^m)$-functions with elements in $L^p(Q^d;\R^m)$ through constant extension in $y_d$-direction. 
The question whether one even has $\fhom=\fhomprime=\fhomzero$ is open at this point. However, if equality holds, this is necessarily attributed to the convexity of $f$. In the homogeneous case with a general $f:\R^m\to \R$ and $\Acal=\curl$, one finds that $\fhom=\Qcal f$ and $\fhomzero=\Qcal_{\curl_0}f$, where $\Qcal f$ is the classical quasiconvex envelope of $f$ and $\Qcal_{\curl_0}f$ coincides with the cross-quasiconvex envelope of $f$~(compare~\cite[Section~5]{KR14}), which can be strictly smaller than $\Qcal f$ as pointed out for example in \cite{LR00}.
 
 By the way, if $A^{(d)}=A^{(d)}_+$ as in $\Acal=\diverg$, one can show that $\fhomzero$ is identical with the unconstraint homogenization formula $f_{\rm hom}^0$ with $\Acal=0$.

c) We remark that~\eqref{A0_fastslow} and~\eqref{lowerbound2} are still true if the sequences $u_\eps \in \Ucal_{\Acal_\eps}$ $(\eps>0)$ are replaced by $\Acal_0$-free fields, so by $(u_\eps)\subset \Ucal_{\Acal_0}$.
This can be viewed as a consequence of~\cite[Proposition~2.10]{FoKroe10a} applied with $\Acal=\Acal_0$. Notice that the argument does not use the constant-rank property of the operator $\Acal$.
\end{remark}

\subsection{Upper bound I: The case of constant weak limits}

We first prove the following special case of the upper bound.

\begin{proposition}\label{prop:ubA-constant}
Let $\alpha>1$ and assume that (H0), (H2) and (H3) hold. Then for every 
$\xi\in \RR^m$ and every $\eps_j\todown 0$, there is a $p$-equiintegrable sequence 
$(v_j)\subset L^p(\Omega_1;\R^m)$ with $v_j\in \ker_{\Omega_1}\Acal_{\eps_j}$ for $j\in \N$
such that $v_j\weakly 0$ in $L^p(\Omega_1;\R^m)$ for $j\to \infty$ and
\begin{align*}
\limsup_{j\to \infty} \int_{\Omega_1}f\Bigl(\frac{x'}{\eps_j^\alpha}, \frac{x_d}{\eps_j^{\alpha-1}}, \xi+ v_j(x)\Bigr)\dd{x} =
\int_{\Omega_1} \fhom(\xi)\dd{x}
\end{align*}
where $\fhom$ is defined by \eqref{cell}.
\end{proposition}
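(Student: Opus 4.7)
The plan is to exploit the key structural observation that for $\alpha>1$ the two rescalings $\eps^\alpha$ (horizontal) and $\eps^{\alpha-1}$ (vertical) exactly compensate the factor $\tfrac{1}{\eps}$ in $\Acal_\eps$, so that a smooth periodic $\Acal$-free ``cell minimizer'' for $\fhom(\xi)$ lifts directly to an $\Acal_{\eps_j}$-free oscillating sequence. Concretely, given $\delta>0$, pick $\psi_\delta\in V_\Acal$ (cf.\ Remark~\ref{rem:multicell}\,e)), i.e.\ $\psi_\delta\in C^\infty(\T^d;\R^m)$ with $\Acal\psi_\delta=0$ in $Q^d$ and $\int_{Q^d}\psi_\delta=0$, such that by the cell formula~\eqref{cell}
\[
\dashint_{Q^d}f\bigl(y,\xi+\psi_\delta(y)\bigr)\dd y \;\leq\; \fhom(\xi)+\delta.
\]

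Define the candidate recovery
\[
v_j^\delta(x):=\psi_\delta\bigl(\Phi_j(x)\bigr),\qquad \Phi_j(x):=\Bigl(\tfrac{x'}{\eps_j^\alpha},\,\tfrac{x_d}{\eps_j^{\alpha-1}}\Bigr),\qquad x\in \Omega_1.
\]
By the chain rule, $\partial_k v_j^\delta=\eps_j^{-\alpha}(\partial_k\psi_\delta)\!\circ\!\Phi_j$ for $k<d$ and $\partial_d v_j^\delta=\eps_j^{-(\alpha-1)}(\partial_d\psi_\delta)\!\circ\!\Phi_j$, whence
\[
\Acal_{\eps_j}v_j^\delta = \Acal' v_j^\delta+\tfrac{1}{\eps_j}A^{(d)}\partial_d v_j^\delta = \tfrac{1}{\eps_j^\alpha}\bigl[\Acal'\psi_\delta+A^{(d)}\partial_d\psi_\delta\bigr]\!\circ\!\Phi_j = \tfrac{1}{\eps_j^\alpha}\,(\Acal\psi_\delta)\!\circ\!\Phi_j=0.
\]
This identity is the cornerstone of the construction. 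Since $\psi_\delta$ is smooth, $Q^d$-periodic and mean-zero, and both periods $\eps_j^\alpha,\eps_j^{\alpha-1}$ tend to zero (the second one precisely because $\alpha>1$), the standard multi-scale Riemann--Lebesgue lemma gives $v_j^\delta\weakly 0$ in $L^p(\Omega_1;\R^m)$ as $j\to\infty$, and the bound $\|v_j^\delta\|_{L^\infty}\leq\|\psi_\delta\|_{L^\infty}$ ensures $p$-equiintegrability of $(v_j^\delta)_j$ at each fixed $\delta$.

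Applying the same averaging principle to $g_\delta(y):=f(y,\xi+\psi_\delta(y))\in L^\infty(Q^d)$, bounded via (H3) and the smoothness of $\psi_\delta$, yields
\[
\lim_{j\to\infty}\int_{\Omega_1} f\Bigl(\tfrac{x'}{\eps_j^\alpha},\tfrac{x_d}{\eps_j^{\alpha-1}},\,\xi+v_j^\delta(x)\Bigr)\dd x = |\Omega_1|\dashint_{Q^d}g_\delta(y)\dd y \in \bigl[|\Omega_1|\fhom(\xi),\,|\Omega_1|(\fhom(\xi)+\delta)\bigr],
\]
the lower bound on the last interval being immediate from the definition of $\fhom$ as an infimum. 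A standard Attouch-type diagonal extraction along a sufficiently slowly chosen $\delta_j\to 0$ then produces a single sequence $v_j:=v_j^{\delta_j}$ preserving $\Acal_{\eps_j}$-freeness, $v_j\weakly 0$ and $p$-equiintegrability, and satisfying $\lim_{j\to\infty}\int_{\Omega_1} f(\,\cdots,\xi+v_j)\dd x=\int_{\Omega_1}\fhom(\xi)\dd x$; in particular the $\limsup$ takes this value.

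The only genuinely delicate ingredient is the $\Acal_{\eps_j}$-freeness identity above: it hinges on the exact factorization $\tfrac{1}{\eps}\cdot\tfrac{1}{\eps^{\alpha-1}}=\tfrac{1}{\eps^\alpha}$, which simultaneously dictates the ratio $\eps^{-1}$ between the two rescalings inside $\psi_\delta$. The hypothesis $\alpha>1$ enters precisely by forcing $\eps_j^{\alpha-1}\to 0$, so that $v_j^\delta$ genuinely oscillates in the thin direction and the averaging does produce weak convergence to the mean; for $\alpha\le 1$ this ansatz collapses, in line with the nonlocal phenomenon of Theorem~\ref{theo:simultaneous_nonlocal}.
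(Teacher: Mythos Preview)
Your proof is correct and follows essentially the same route as the paper: both define $v_j(x)=\psi\bigl(\tfrac{x'}{\eps_j^\alpha},\tfrac{x_d}{\eps_j^{\alpha-1}}\bigr)$ for a periodic $\Acal$-free near-minimizer $\psi$ and exploit the identity $\Acal_{\eps_j}v_j=\eps_j^{-\alpha}(\Acal\psi)\circ\Phi_j=0$. The only cosmetic differences are that you work with smooth $\psi_\delta\in V_\Acal$ and spell out the final diagonalization in $\delta$, whereas the paper takes $v\in\Vcal_\Acal$ directly and computes the limit via an explicit covering by anisotropic cuboids rather than invoking the Riemann--Lebesgue lemma.
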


\begin{remark}\label{rem:ubA-constant}
By construction, $v_j$ is actually defined on all of $\RR^d$, $\eps_j^\alpha$-periodic in the first $d-1$ variables and $\eps_j^{\alpha-1}$-periodic in the last. Moreover, it satisfies $\cA_{\eps_j}v_j=0$ on $\RR^d$. In particular,
$\cA_{\eps_j}v_j=0$ on $\TT^{d-1}\times (0,1)$ whenever $\eps_j^{-\alpha}\in \NN$.
\end{remark}
\begin{proof}[Proof of Proposition~\ref{prop:ubA-constant}]
For $v\in \Vcal_{\Acal}$ and $j\in \N$ we set
\begin{align*}
v_j(x)=v\Bigl(\frac{x'}{\eps_j^\alpha}, \frac{x_d}{\eps_j^{\alpha-1}}\Bigr), \qquad x\in \Omega_1.
\end{align*}
Then,
\begin{align*}
\Acal_{\eps_j}v_{j}(x)=\frac{1}{\eps_j^\alpha}\Acal v\Bigl(\frac{x'}{\eps_j^\alpha}, \frac{x_d}{\eps_j^{\alpha-1}}\Bigr)=0\qquad \text{in $\Omega_1$,}
\end{align*}
and $v_j\weakly \dashint_{Q^d}v\dd{y}=0$ in $L^p(\Omega_1;\R^m)$. 
Moreover, $(v_j)$ is clearly $p$-equiintegrable. 
Covering $\Omega_1$ up to a thin layer near the boundary with cuboids 
of the form
\[
	R_{j}(i):=[\eps_j^{\alpha}(i'+Q^{d-1})]\times [\eps_j^{\alpha-1}(i_d,i_d+1)]\subset \RR^{d-1}\times \RR,
\]
using only those $i=(i',i_d)\in \ZZ^d$ such that $R_{j}(i)\subset \Omega_1$, i.e.,
\[
	i\in I_j:=\{z\in \ZZ^d\mid R_{j}(z)\subset \Omega_1 \},
\]
we have that 
\[
	\absB{\Omega_1\setminus \bigcup_{i\in I_j} R_j(i)}\to 0
	\quad\text{and}\quad 
	\eps_j^{(d-1)\alpha +\alpha-1} \# I_j=\sum_{i\in I_j}\absb{R_{j}(i)}  \to \abs{\Omega_1}
\]
as $j\to \infty$. Therefore, by equiintegrability in conjunction with (H2), the periodicity of $f$, and a change of variables,
\begin{align*}
&\limsup_{j\to \infty}\int_{\Omega_1} f\Bigl(\frac{x'}{\eps_j^\alpha},\frac{x_d}{\eps_j^{\alpha-1}}, \xi+v_j(x)\Bigr)\dd{x}\\
&\qquad=\limsup_{j\to \infty} \sum_{i\in I_j} \int_{R_j(i)} 
f\Bigl(\frac{x'}{\eps_j^\alpha}, \frac{x_d}{\eps_j^{\alpha-1}}, \xi+v\Bigl(\frac{x'}{\eps_j^\alpha}, \frac{x_d}{\eps_j^{\alpha-1}}\Bigr)\Bigr)\dd{x} \\
&\qquad=\limsup_{j\to \infty}  \eps_j^{(d-1)\alpha+\alpha-1} \sum_{i\in I_j}  \int_{Q^{d}} 
f\Bigl(y', y_d, \xi+v(y', y_d)\Bigr)\dd{y} \\
&\qquad =\abs{\Omega_1}\int_{Q^d} f(y, \xi+v(y))\dd{y}.
\end{align*}
\end{proof}

\subsection{Upper bound II: Localization}

As the next step towards an upper bound in the general case, it is natural to consider weak limits which are piecewise constant. To handle this, we essentially have to find a method which allows us to glue together different recovery sequences, in the simplest case with a transition layer in the neighborhood of a fixed plane. 
The main difficultly arises from possible jumps across a plane of the form $\{x_d=c\}$, because the term $\frac{1}{\eps_j}A^{(d)}\partial_d$ in our differential constraint becomes hard to control, if we introduce artificial transitions via a cut-off in direction of $x_d$ -- in the worst case, we are unable to glue in an $\cA_{\eps_j}$-free way. The key observation here is that this problem can be overcome provided that the sequences we are trying to glue together oscillate fast enough in direction of $x'$ (compared to $\eps_j$).
\begin{lemma}[Localization in \boldmath{$x_d$}, given fast oscillation in \boldmath{$x'$}]\label{lem:localize}
Let $\eps_j\todown 0$, and let $(v^\sharp_j)\subset L^p(Q^{d};\RR^m)$ be a bounded sequence with
$\cA_{\eps_j} v^\sharp_j=0$ in $\TT^{d-1} \times (0,1)$. 
In addition, suppose that for each $j\in \N$,
\[
	\text{$v^\sharp_j$ is $\tau_j$-periodic in the first $d-1$ variables}
\]
with a sequence $\tau_j\todown 0$ such that $1/\tau_j \in \NN$ and $\frac{\tau_j}{\eps_j}\todown 0$. 
Then, for every function $\eta:Q^d\to \R$ defined by $\eta(x)=\eta_d(x_d)$ for $x\in Q^d$ with $\eta_d\in C_c^1((0,1);[0,1])$,
\begin{align}\label{cutoff}
	\normb{\cA_{\eps_j}[\eta (v^\sharp_j-\bar{v}_j)]}_{W^{-1,p}(\TT^{d};\RR^l)}
	\To 0\qquad \text{as $j\to \infty$},
\end{align}
where $\bar{v}_j$ is the cell average of $v^\sharp_j$ in $x'$, i.e.,
$$
\begin{alignedat}{2}
	&&&\bar{v}_j(x):=\int_{Q^{d-1}} v^\sharp_j(\tau_j y',x_d) \dd{y'}, \qquad x\in Q^d.
\end{alignedat}
$$
\end{lemma}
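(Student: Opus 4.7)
My approach is to compute $\cA_{\eps_j}[\eta(v_j^\sharp-\bar v_j)]$ directly via the Leibniz rule, use the $\cA_{\eps_j}$-freeness of $v_j^\sharp$ together with a hidden cancellation enjoyed by the $x'$-average $\bar v_j$ to eliminate the potentially singular bulk contribution of order $1/\eps_j$, and then estimate the remaining cutoff term in $W^{-1,p}$ by duality, exploiting the fast $\tau_j$-scale oscillation of $v_j^\sharp-\bar v_j$ in $x'$.

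Since $\eta=\eta_d(x_d)$ depends only on $x_d$, the only contribution of the Leibniz rule comes from the $\partial_d$-part of $\cA_{\eps_j}$: setting $w_j:=v_j^\sharp-\bar v_j$,
\begin{align*}
\cA_{\eps_j}[\eta w_j]=\eta\,\cA_{\eps_j}w_j+\frac{\eta'(x_d)}{\eps_j}A^{(d)}w_j.
\end{align*}
Since $\bar v_j$ depends only on $x_d$ we have $\cA'\bar v_j=0$, and combined with $\cA_{\eps_j}v_j^\sharp=0$ this gives $\cA_{\eps_j}w_j=-\tfrac{1}{\eps_j}A^{(d)}\partial_d\bar v_j$. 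The key auxiliary claim is then
\begin{align*}
A^{(d)}\partial_d\bar v_j=0\quad\text{in the sense of distributions on $(0,1)$.}
\end{align*}
To prove it, I test the identity $\cA_{\eps_j}v_j^\sharp=0$ against $\Phi(x)=\phi(x_d)c$ with $\phi\in C_c^\infty((0,1))$ and $c\in\R^l$; such $\Phi$ is constant in $x'$, so $(\cA')^T\Phi=0$, and the only surviving term is $\tfrac{1}{\eps_j}(A^{(d)})^T\phi'(x_d)c$. The integration of $v_j^\sharp$ in $x'$ over $Q^{d-1}$ reproduces $\bar v_j$ thanks to $\tau_j$-periodicity and $1/\tau_j\in\NN$, which yields the claim and in particular $\eta\,\cA_{\eps_j}w_j=0$, reducing matters to the cutoff term $\tfrac{\eta'(x_d)}{\eps_j}A^{(d)}w_j$.

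For the final estimate the point is that, for each fixed $x_d$, $w_j(\cdot,x_d)$ is $\tau_j$-periodic on $\TT^{d-1}$ and has vanishing mean over each $\tau_j$-cell (the defining property of $\bar v_j$). Given $\Phi\in W^{1,p'}(\TT^d;\R^l)$ with $\|\Phi\|_{W^{1,p'}}\leq 1$, I set $\tilde\Phi:=\eta'(x_d)(A^{(d)})^T\Phi$, partition $Q^{d-1}$ into cubes of side $\tau_j$, and subtract the cell-average of $\tilde\Phi(\cdot,x_d)$ on each such cube (admissible because $w_j(\cdot,x_d)$ has zero mean there). The Poincar\'e inequality on cubes of side $\tau_j$, H\"older in $x'$, and H\"older in $x_d$ then combine to give
\begin{align*}
\absB{\int_{\TT^d}\tfrac{\eta'(x_d)}{\eps_j}A^{(d)}w_j(x)\cdot\Phi(x)\dd{x}}
\leq C\,\tfrac{\tau_j}{\eps_j}\,\|w_j\|_{L^p(Q^d)}\,\|\Phi\|_{W^{1,p'}(\TT^d)}.
\end{align*}
Taking the supremum over admissible $\Phi$ and invoking $\tau_j/\eps_j\todown 0$ and the uniform $L^p$-bound on $(w_j)$ yields \eqref{cutoff}.

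The main obstacle is precisely the factor $1/\eps_j$ multiplying $A^{(d)}\partial_d$: a naive cutoff in $x_d$ produces a $W^{-1,p}$-contribution of order $1/\eps_j$ which can blow up, and the only way to defeat it is through the two compensating ingredients acting in tandem — the removal of the $x'$-average $\bar v_j$ kills the entire bulk contribution of this order, while the $\tau_j$-oscillation of $v_j^\sharp-\bar v_j$ supplies the cutoff term with a gain of $\tau_j$ in $W^{-1,p}$. The hypothesis $\tau_j\ll\eps_j$ is exactly what makes the remaining quotient $\tau_j/\eps_j$ vanish.
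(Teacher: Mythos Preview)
Your proof is correct and follows essentially the same approach as the paper: both reduce to the cutoff term $\frac{1}{\eps_j}\eta_d'(x_d)A^{(d)}(v_j^\sharp-\bar v_j)$ by exploiting $A^{(d)}\partial_d\bar v_j=0$, and then gain the crucial factor $\tau_j$ via duality and a Poincar\'e inequality in $x'$. The only cosmetic difference is in how the Poincar\'e step is implemented: the paper periodizes the test function $\varphi$ (averaging over $\tau_j$-shifts to produce a $\tau_j$-periodic $\varphi_j^\sharp$), rescales to a single cell, subtracts the mean $\bar\varphi_j$, and applies Poincar\'e on $Q^{d-1}$, whereas you partition $Q^{d-1}$ directly into $\tau_j$-cubes, subtract local averages of the test function on each, and apply Poincar\'e on the small cubes. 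These are two equivalent packagings of the same mechanism.
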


\begin{remark}\label{rem:proplocal}
a) The main point of the lemma is to control the influence of the cut-off function $\eta$ in film thickness direction creating a transition between $v^\sharp_j-\bar{v}_j$ and zero. As~\eqref{cutoff} shows, we do not move too far away from the class of $\cA_{\eps_j}$-free fields, which later allows us to project back with an acceptable error.

b) 
The requirement that $1/\tau_j\in \NN$ is not really necessary, but convenient
because $1$-periodicity and $\tau_j$-periodicity then match nicely.

c) The cell average $\bar v_j$ satisfies 
\[
	\cA_{\eps_j}\bar{v}_{j}=\frac{1}{\eps_j}A^{(d)}\partial_d \bar{v}_{j}=0 \qquad \text{in $\T^d$},
\]
instead of only in $\TT^{d-1}\times (0,1)$.
If $v^\sharp_j$ is smooth enough, this can be seen as follows:
Since $v^\sharp_j$ is $\tau_j$-periodic in the first $d-1$ directions, 
\[
	\int_{Q^{d-1}} \partial_{k} v^\sharp_j(\tau_j y',x_d)\dd{y'}=0,
\]
for $k=1,\ldots,d-1$ and every $x_d$. Hence,
\[
\begin{aligned}
	\frac{1}{\eps_j}\partial_d A^{(d)} \bar{v}_{j}(x)
	&= \int_{Q^{d-1}} \frac{1}{\eps_j}A^{(d)}\partial_d v^\sharp_j(\tau_j y',x_d)\dd{y'} \\
	&= \int_{Q^{d-1}} (\cA_{\eps_j} v^\sharp_j)(\tau_j y',x_d)\dd{y'} =0.
\end{aligned}
\]
This means that the relevant component of $\bar{v}_{j}$ (i.e.~its orthogonal projection in $\R^m$ onto $(\ker A^{(d)})^\perp$) is constant in $x_d$, which of course does not change if we extend $\bar{v}_{j}$ $1$-periodically in $x_d$.\\
\end{remark}

\begin{proof}[Proof of Lemma~\ref{lem:localize}]~\\
The space $W^{-1, p}(\T^d;\R^l)$ is the dual of $W^{1,p'}(\T^{d};\R^l)$, which denotes the closure of
the set of $C^{\infty}(\T^d;\R^l)$-functions regarding the Sobolev norm 
$\norm{\frarg}_{W^{1,p'}(Q^d;\R^l)}$.

Since $\cA_{\eps_j}v^\sharp_j=\cA_{\eps_j}\bar{v}_j=0$ in $\T^{d-1}\times (0,1)$,
\[
	\cA_{\eps_j}[\eta(v^\sharp_j-\bar{v}_j)]=\frac{1}{\eps_j} (\partial_d \eta) A^{(d)} (v^\sharp_j-\bar{v}_j) \qquad\text{in $\T^{d-1}\times (0,1)$}.
\]
Therefore, it suffices to show that for every
$\varphi\in W^{1,p'}(\TT^{d};\RR^l)$,
\begin{equation}\label{lemloc1}
	\absB{\int_{Q^{d}} \eta_d'(x_d) A^{(d)}(v^\sharp_j-\bar{v}_j)(x)\cdot \varphi(x)\dd{x}}
	\leq C \tau_j \norm{\varphi}_{W^{1,p'}(\T^d;\R^l)}
\end{equation}
with a constant $C$ independent of $j$ and $\varphi$ (it may depend on $\eta_d$, though). Here, $\eta'_d$ is the derivative of $\eta_d$. 

Actually,
we may even assume that $\varphi\in W_0^{1,p'}(\TT^{d-1}\times (0,1);\RR^l)$, i.e., $\varphi\in W^{1,p'}(\T^d;\R^l)$ with $\varphi=0$ for $x_d\in \{0,1\}$, because the
distance $d_{\eta_d}$ of $\supp \eta_d$ to the boundary of $(0,1)$ is positive. Then the general case of $\varphi\in W^{1,p'}(\TT^{d};\RR^l)$ can be recovered by applying \eqref{lemloc1} to $\tilde{\varphi}(x):=\psi(x_d)\varphi(x)$ for $x\in Q^d$, where $\psi\in C_c^\infty((0,1);[0,1])$ is a fixed function such that $\psi\equiv 1$ on $\supp \eta_d$. This does not change the left-hand side of \eqref{lemloc1}, and $\psi$ can be chosen in such a way that $\norm{\psi'}_{L^\infty(0,1)}\leq C(\eta_d):=\frac{2}{d_{\eta_d}}$, which implies that
\begin{align*}
\norm{\tilde{\varphi}}_{W^{1,p'}(\T^d;\R^l)}\leq (1+\max\{1,C(\eta_d)\})^{\frac{1}{p'}} \norm{\varphi}_{W^{1,p'}(\T^d;\R^l)}.\end{align*}

From now on, let $\varphi\in W_0^{1,p'}(\TT^{d-1}\times (0,1);\RR^l)$. For each $j\in \N$, we set $h_j:=1/\tau_j \in \NN$, and let
$\varphi^\sharp_j$ be the $\tau_j$-periodic function (in $x'$) we get as the average of $h_j^{d-1}$ appropriately shifted copies of $\varphi$, i.e.,
\[
	\varphi^\sharp_j(x',x_d):=\frac{1}{h_j^{d-1}}
	\sum_{i\in \ZZ^{d-1},~i\tau_j\in [0,1)^{d-1}} \varphi(x'+i\tau_j,x_d).
\]
Notice that due to the convexity of the norm, 
\begin{align}\label{est_varphisharp}
	\norm{\varphi^\sharp_j}_{W^{1,p'}(\T^d;\R^l)}\leq \norm{\varphi}_{W^{1,p'}(\T^d;\R^l)}.
\end{align}
By the periodicity of $v_j^\sharp$ and $\varphi_j^\sharp$ in $x'$ and a change of variables, we have that
$$
\begin{aligned}
	&\int_{Q^{d}}\eta_d'(x_d)A^{(d)}(v^\sharp_j-\bar{v}_j)(x)\cdot\varphi(x)\dd{x}\\
	&\qquad=\int_{Q^{d}}\eta_d'(x_d)A^{(d)}(v^\sharp_j-\bar{v}_j)(x)\cdot\varphi^\sharp_j(x)\dd{x}\\
	&\qquad= \int_{Q^{d-1}\times (0,1)}\eta_d'(x_d)A^{(d)}(v^\sharp_j-\bar{v}_j)(\tau_j y',x_d)\cdot\varphi^\sharp_j(\tau_j y',x_d) \dd{(y',x_d)}.
\end{aligned}
$$
In the last line, we can modify $\varphi^\sharp_j$ by any function which is constant in the first $d-1$ variables, because the remaining integrand has average zero in $Q^{d-1}$. In particular, we may use
$\varphi^\sharp_j-\bar{\varphi}_j$ in place of $\varphi^\sharp_j$, where
\[
	\bar{\varphi}_j(x):= 
	\int_{Q^{d-1}} \varphi_j^\sharp(\tau_j y',x_d)\,dy'=\int_{Q^{d-1}} \varphi(y',x_d)\,dy', \qquad x\in Q^d.
\]
Since $\eta'_d$ is bounded, and $(v^\sharp_j)$ (and thus also $(\bar{v}_j)$) is uniformly bounded in $L^p(Q^d;\R^m)$, H\"older's inequality implies that
\begin{align*}
	&\absB{\int_{Q^{d}}\eta_d'(x_d)A^{(d)}(v^\sharp_j-\bar{v}_j)(x)\cdot \varphi(x)\,\dd{x}} \\ &\qquad\qquad \leq C  \Big(\int_{Q^{d-1}\times (0,1)} \abs{\varphi^\sharp_j(\tau_j y',x_d)-\bar{\varphi}_j(\tau_jy', x_d)}^{p'}\dd{(y',x_d)}\Big)^{\frac{1}{p'}}
\end{align*}
with a constant $C$ independent of $j$ and $\varphi$. By Poincar\'e's inequality on $Q^{d-1}$,
\[
	\norm{\varphi^\sharp_j(\tau_j \,\frarg\, ,x_d)-\bar{\varphi}_j(\tau_j\frarg, x_d)}_{L^{p'}(Q^{d-1};\R^l)}
	\leq C\tau_j \norm{(\nabla'\varphi^\sharp_j)(\tau_j \,\frarg\, ,x_d)}_{L^{p'}(Q^{d-1};\R^{l\times (d-1)})}
\]
for almost all $x_d\in (0,1)$,
whence
$$
\begin{aligned}
	&\absB{\int_{Q^{d}}\eta_d'(x_d)A^{(d)}(v^\sharp_j-\bar{v}_j)(x)\cdot\varphi(x)\dd{x}}\\
	&\qquad\leq  C \tau_j \Big(\int_{Q^{d-1}\times (0,1)} \abs{(\nabla' \varphi^\sharp_j)(\tau_j y',x_d)}^{p'} \dd{(y',x_d)}\Big)^{\frac{1}{p'}}\\
	&\qquad= C \tau_j  \norm{\nabla' \varphi^\sharp_j}_{L^{p'}(Q^{d};\R^{l\times (d-1)})}\leq  C  \tau_j\norm{\varphi}_{W^{1,p'}(\T^d;\R^{l})}
\end{aligned}
$$	
by~\eqref{est_varphisharp}, which proves \eqref{lemloc1}.
\end{proof}

In directions tangential to the film, transitions can be created in the usual, straightforward way, without any of the difficulties encountered above.
\begin{lemma}[Localization in \boldmath{$x'$}]\label{lem:localize2}
Let 
$\eps_j\todown 0$, and let $(v_j)\subset L^{p}(Q^{d};\RR^m)$ be a sequence with $v_j\rightharpoonup 0$ in $L^p(Q^d;\R^m)$ and $\cA_{\eps_j} v_j\to 0$ in $W^{-1,p}(\TT^d;\RR^l)$. 
Then, for every function $\varrho:Q^d\to \R$ defined by $\varrho(x)=\varrho'(x')$ for $x\in Q^d$ with $\varrho'\in C_c^\infty(Q^{d-1};[0,1])$,
$$
	\normb{\cA_{\eps_j}[\varrho v_j]}_{W^{-1,p}(\TT^{d};\RR^l)}
	 \To 0 \qquad \text{as $j\to \infty$}.
$$
\end{lemma}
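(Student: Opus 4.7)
The plan is to expand $\cA_{\eps_j}[\varrho v_j]$ via the Leibniz rule. Since $\varrho$ depends only on $x'$, the term $\frac{1}{\eps_j}A^{(d)}\partial_d$ in $\cA_{\eps_j}$ produces no derivative of $\varrho$, and we obtain in the sense of distributions on $\T^d$
\begin{equation*}
	\cA_{\eps_j}[\varrho v_j]\;=\;\varrho\,\cA_{\eps_j}v_j\;+\;\sum_{k=1}^{d-1} (\partial_k \varrho')\, A^{(k)} v_j.
\end{equation*}
Thus it suffices to prove that each of the two summands on the right-hand side converges to zero in $W^{-1,p}(\T^d;\R^l)$, independently of any scale. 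Note that $\varrho'\in C_c^\infty(Q^{d-1})$ extends by zero to a smooth function on $\T^{d-1}$, hence $\varrho$ is smooth on $\T^d$.

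For the first summand, multiplication by a smooth function on $\T^d$ is a bounded linear operator on $W^{1,p'}(\T^d;\R^l)$, so by duality it is also bounded on $W^{-1,p}(\T^d;\R^l)$, in the sense $\dpr{\varrho T,\varphi}:=\dpr{T,\varrho \varphi}$. Since $\cA_{\eps_j}v_j\to 0$ in $W^{-1,p}(\T^d;\R^l)$ by assumption, $\varrho\,\cA_{\eps_j}v_j\to 0$ in the same space. For the second summand, multiplication by the bounded smooth function $\partial_k \varrho'$ preserves weak convergence in $L^p$, so $(\partial_k\varrho')v_j \weakly 0$ in $L^p(\T^d;\R^m)$ for each $k=1,\ldots,d-1$. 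Now the Rellich--Kondrachov embedding on the compact manifold $\T^d$ yields the compact embedding $W^{1,p'}(\T^d)\cembed L^{p'}(\T^d)$, and by duality $L^p(\T^d)\cembed W^{-1,p}(\T^d)$. Consequently, the bounded weakly null sequence $((\partial_k \varrho') v_j)$ converges strongly to zero in $W^{-1,p}(\T^d;\R^m)$, hence so does $(\partial_k\varrho')A^{(k)}v_j$ in $W^{-1,p}(\T^d;\R^l)$. Summing over $k$ concludes the proof.

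There is no real obstacle here, in contrast with the preceding Lemma on localization in $x_d$. The reason is precisely that the potentially dangerous factor $\frac{1}{\eps_j}$ attached to $A^{(d)}\partial_d$ never gets hit by a derivative of the cut-off, since $\varrho$ is constant in $x_d$; only the bounded-coefficient tangential part of $\cA_{\eps_j}$ produces commutator terms, and those are handled by the standard soft argument above.
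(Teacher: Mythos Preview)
Your proof is correct and follows essentially the same approach as the paper: both expand $\cA_{\eps_j}[\varrho v_j]$ via the Leibniz rule, handle $\varrho\,\cA_{\eps_j}v_j$ directly from the hypothesis, and dispose of the commutator terms $(\partial_k\varrho)A^{(k)}v_j$ using the compact embedding $L^p\hookrightarrow W^{-1,p}$ on the torus together with $v_j\rightharpoonup 0$.
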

\begin{proof}
We have that
\[
	\cA_{\eps_j}[\varrho v_j]=\Big(\sum_{k=1}^{d-1} (\partial_k \varrho) A^{(k)}v_j
	\Big)+ \varrho \cA_{\eps_j}v_j.
\]
Clearly, $\varrho\cA_{\eps_j}v_j\to 0$ in $W^{-1,p}(\TT^{d};\RR^l)$ by assumption, and by compact embedding, we have that $v_j\rightarrow 0$ strongly in $W^{-1,p}(\TT^{d};\RR^m)$. Consequently, since the functions $\partial_k \varrho$ are uniformly bounded, it follows that
\[
	\sum_{k=1}^{d-1} (\partial_k \varrho) A^{(k)}v_j\To 0  \qquad\text{in $W^{-1,p}(\T^d;\R^l)$ as $j\to \infty$,}
\]
which finishes the proof.
\end{proof}
%
\subsection{Upper bound III: The general case}
The construction of a recovery sequence in the general case yields the following general upper bound.
\begin{proposition}\label{prop:upperboundA}
Let $\alpha>1$ and $\eps_j\todown 0$, and assume that (H0), (H2)--(H5) and (A1)--(A4) hold. 
Then, for every $u\in \Ucal_{\Acal_0}$, there is a sequence $(u_j)$ with 
$u_j\in \Ucal_{\Acal_{\eps_j}}$ $(j\in \N)$ 
such that $u_j\weakly u$ in $L^p(\Omega_1;\R^m)$ for $j\to \infty$ and
\begin{align*}
\limsup_{j\to \infty} \int_{\Omega_1}f\Bigl(\frac{x'}{\eps_j^\alpha}, \frac{x_d}{\eps_j^{\alpha-1}}, u_j(x)\Bigr)\dd{x} \leq \int_{\Omega_1} 
f^{\rm hom}_{\Acal}(u(x))\dd{x},
\end{align*}
with $f_{\Acal}^{\rm hom}:\RR^m\to \RR$ defined by \eqref{cell}.
\end{proposition}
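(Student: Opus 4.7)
The plan is a gluing-and-projection scheme: use Proposition \ref{prop:ubA-constant} to produce oscillating correctors locally on small cuboids where $u$ is nearly constant, assemble them with cutoffs, and project the result onto $\ker_{\Omega_1}\Acal_{\eps_j}$ via Lemma \ref{theorem_projection}. First, by mollification (which preserves the constant-coefficient constraint $\Acal_0 u = 0$) together with (A3) to handle the $x'$-boundary and the uniform $L^p$-continuity of the energy (Remark \ref{rem:ucont}), it suffices to treat a smooth $u \in \Ucal_{\Acal_0}$; a diagonal argument then gives the general case. For such $u$, one checks using (A2) that $\Acal_{\eps_j} u = (\Acal'_+ u,\,0)^T$, which is bounded but not small in $W^{-1,p}$, so a preliminary corrector $w_j = O(\eps_j)$ in $L^\infty$, constructed by integrating a pseudoinverse in $x_d$ in the spirit of the proof of Lemma \ref{lem:A0} and using (A4) to respect the $\Acal'_-$ block, yields $u - w_j \in \Ucal_{\Acal_{\eps_j}}$ with $w_j \to 0$ strongly in $L^p$.

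Next, partition $\Omega_1$ into axis-aligned cuboids $Q_i^\rho = \omega_i^\rho \times (a_i^\rho, b_i^\rho)$ of diameter at most $\rho$, and pick $\xi_i^\rho := u(x_i^\rho)$ with $x_i^\rho \in Q_i^\rho$. Apply Proposition \ref{prop:ubA-constant} and Remark \ref{rem:ubA-constant} to obtain globally defined, $p$-equiintegrable, weakly null sequences $v_j^{(i)}\in \ker \Acal_{\eps_j}$ that oscillate with periods $\eps_j^\alpha$ in $x'$ and $\eps_j^{\alpha-1}$ in $x_d$ and attain the homogenized density $f_{\rm hom}(\xi_i^\rho)$ in the sense of Proposition \ref{prop:ubA-constant}. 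With product cutoffs $\phi_i^\rho(x) = \varrho_i(x')\eta_i(x_d)$ compactly supported in $Q_i^\rho$ and equal to $1$ outside a thin neighborhood of $\partial Q_i^\rho$, set
\[
\hat{u}_j^\rho := (u - w_j) + \sum_i \phi_i^\rho\, v_j^{(i)}.
\]
By Leibniz, each tangential cutoff contribution $\Acal_{\eps_j}(\varrho_i v_j^{(i)})$ vanishes in $W^{-1,p}$ by Lemma \ref{lem:localize2}, while the critical transverse cutoff contribution $\Acal_{\eps_j}(\eta_i v_j^{(i)})$ vanishes by Lemma \ref{lem:localize}, applied with $\tau_j = \eps_j^\alpha$: the key ratio $\tau_j/\eps_j = \eps_j^{\alpha-1}$ tends to zero precisely because $\alpha > 1$. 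Hence $\Acal_{\eps_j}\hat{u}_j^\rho \to 0$ in $W^{-1,p}$, and Lemma \ref{theorem_projection} produces $u_j^\rho \in \Ucal_{\Acal_{\eps_j}}$ with $u_j^\rho - \hat{u}_j^\rho \to 0$ in $L^p$. Since $w_j \to 0$ strongly and $v_j^{(i)}\weakly 0$, it follows that $u_j^\rho \weakly u$ in $L^p(\Omega_1;\R^m)$ as $j \to \infty$.

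For the energy, Remark \ref{rem:ucont} allows replacing $u_j^\rho$ by $u + \sum_i \phi_i^\rho v_j^{(i)}$ inside the $\limsup$. On the bulk of $Q_i^\rho$ where $\phi_i^\rho \equiv 1$, the closeness of $u$ to $\xi_i^\rho$ and the $p$-Lipschitz bound of Remark \ref{rem:ucont} allow further replacing $u$ by $\xi_i^\rho$ up to an $L^1$-error that is $o_\rho(1)$ uniformly in $j$; Proposition \ref{prop:ubA-constant} then bounds the bulk contribution by $|Q_i^\rho| f_{\rm hom}(\xi_i^\rho) + o_j(1)$. The thin transition layers contribute $o_\rho(1)$ uniformly in $j$ by $p$-equiintegrability of the $v_j^{(i)}$. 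Summing over $i$ and invoking continuity of $f_{\rm hom}$ (Remark \ref{rem:multicell}\,b)) yields $\limsup_j F_{\eps_j,\eps_j^\alpha}(u_j^\rho) \leq \int_{\Omega_1} f_{\rm hom}(u)\,\di x + o_\rho(1)$, and a standard diagonal extraction concludes.

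The principal obstacle is reconciling a cutoff in the film-thickness direction with the $\eps_j^{-1}$-amplified derivative in $\Acal_{\eps_j}$: a naive cutoff $\eta_i(x_d)$ produces an $O(\eps_j^{-1})$ contribution to $\Acal_{\eps_j}(\eta_i v_j^{(i)})$. Lemma \ref{lem:localize} overcomes this via a Poincar\'e-type cancellation that leverages the fast tangential oscillation of $v_j^{(i)}$ relative to $\eps_j$, and this is precisely the mechanism that forces the scaling restriction $\alpha > 1$; the same strategy cannot succeed for $\alpha \leq 1$, consistently with the nonlocality of Theorem \ref{theo:simultaneous_nonlocal}.
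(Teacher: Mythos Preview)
Your overall architecture matches the paper's (piecewise-constant approximation, local correctors from Proposition~\ref{prop:ubA-constant}, cutoffs via Lemmas~\ref{lem:localize}--\ref{lem:localize2}, then project with Lemma~\ref{theorem_projection}), but there is a genuine gap in your invocation of Lemma~\ref{lem:localize}. You claim that $\Acal_{\eps_j}(\eta_i v_j^{(i)})\to 0$ in $W^{-1,p}$ by that lemma. However, Lemma~\ref{lem:localize} does \emph{not} control $\Acal_{\eps_j}[\eta v_j^\sharp]$; it only controls $\Acal_{\eps_j}\bigl[\eta\,(v_j^\sharp-\bar v_j)\bigr]$, where $\bar v_j(x_d)=\int_{Q^{d-1}} v_j^\sharp(\tau_j y',x_d)\,dy'$ is the cell average in $x'$. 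The leftover term is $\Acal_{\eps_j}[\eta\,\bar v_j]=\tfrac{1}{\eps_j}\eta_d'\,A^{(d)}\bar v_j$ (using Remark~\ref{rem:proplocal}(c) for $A^{(d)}\partial_d\bar v_j=0$ and constancy in $x'$), which is $O(\eps_j^{-1})$ in $W^{-1,p}$ unless $A^{(d)}\bar v_j$ is shown to vanish. The paper closes this by splitting $\bar v_j=P^{(d)}\bar v_j+(I-P^{(d)})\bar v_j$ with $P^{(d)}$ the orthogonal projection onto $\ker A^{(d)}$: one has $\Acal_{\eps_j}[\eta\,P^{(d)}\bar v_j]\equiv 0$ identically, while $(I-P^{(d)})\bar v_j$ is a constant (in $x'$ and $x_d$) that converges strongly to $0$ because $\bar v_j\weakly 0$, see \eqref{projection_convergence}. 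Accordingly, the paper cuts off $v_j^{(k)}-(I-P^{(d)})\bar v_j^{(k)}=(v_j^{(k)}-\bar v_j^{(k)})+P^{(d)}\bar v_j^{(k)}$ rather than $v_j^{(k)}$ itself; the energy is unaffected thanks to Remark~\ref{rem:ucont}. Without this decomposition, your Leibniz argument does not close.

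A secondary point: Lemma~\ref{theorem_projection} projects onto $\ker_{\TT^d}\Acal_{\eps_j}$, not onto $\ker_{\Omega_1}\Acal_{\eps_j}$. The paper therefore works on $Q^d\supset\Omega_1$ (setting $u=0$ on $Q^d\setminus\Omega_1$ and using only cubes $Q_k\subset\Omega_1$), and makes the temporary assumption $\eps_j^{-\alpha}\in\NN$ so that the $\eps_j^\alpha$-periodicity from Remark~\ref{rem:ubA-constant} is compatible with $\TT^{d-1}\times(0,1)$ as required by Lemma~\ref{lem:localize}; the restriction $\eps_j^{-\alpha}\in\NN$ is removed afterwards by a dilation $x\mapsto(\theta_j^\alpha x',\theta_j^{\alpha-1}x_d)$ with $\theta_j\to 1$. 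Your smooth-approximation-plus-(A3) reduction and explicit corrector $w_j$ are acceptable substitutes for the paper's appeal to Lemma~\ref{lem:A0}, but you should still indicate how the projection is carried out on the torus and how the integrality condition on $\eps_j^{-\alpha}$ is handled.
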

The strategy of the proof can be summarized as follows: We first approximate $u$ with a piecewise constant field. On each of the small cubes on which the approximation is constant, we obtain an $\cA_{\eps_j}$-free recovery sequence according to Proposition~\ref{prop:ubA-constant}. 
Then, by using suitable cut-off functions, we glue all these sequences together to obtain one on $\TT^d$. In general, this combined sequence will no longer be $\cA_{\eps_j}$-free, but Lemma~\ref{lem:localize} and Lemma~\ref{lem:localize2} make sure that the effect of the cut-off in terms of violation of the PDE constraint remains small enough. In general, both $\alpha>1$ and the convexity of $f$ play a crucial role: The sequence from Proposition~\ref{prop:ubA-constant} can be assumed to be $\tau_j:=\eps_j^\alpha$-periodic in the first $d-1$ variables as required in Lemma~\ref{lem:localize}, because otherwise, we can replace it by an average of shifted copies, which does not increase the energy due to convexity.
Finally, we project back onto $\cA_{\eps_j}$-fields. Since the projection error converges to zero strongly in $L^p$ as $j\to \infty$, uniform continuity properties then yield the assertion. 
\begin{proof}
It is enough to show that there exists $(w_j)\subset L^p(\Omega_1;\RR^m)$ such that $\cA_{\eps_j}w_j=0$ in $\Omega_1$, $w_j\rightharpoonup 0$ in $L^p(\Omega_1;\R^m)$ and
\begin{align}\label{p:ub-1b}
	\limsup_{j\to \infty} \int_{\Omega_1}f\Bigl(\frac{x'}{\eps_j^\alpha}, \frac{x_d}{\eps_j^{\alpha-1}}, u+w_j\Bigr)\dd{x} \leq \int_{\Omega_1} 
f^{\rm hom}_{\Acal}(u)\dd{x},
\end{align}
because $u\in \Ucal_{\Acal_0}$ can be approximated strongly in $L^p(\Omega_1;\R^m)$ by a sequence in $\Ucal_{\Acal_{\eps_j}}$ due to Lemma~\ref{lem:A0}.
Here and throughout the proof, it is good to keep in mind that the operator  $L^p(\Omega_1;\R^m)\to L^1(\Omega_1)$, $w\mapsto f\left(\frac{\cdot}{\lambda},w(\cdot)\right)$, is uniformly continuous on bounded subsets of $L^p(\Omega_1;\R^m)$, also uniformly in $\lambda\in (0,1]$, as already pointed out in Remark~\ref{rem:ucont}. In particular, this means that in \eqref{p:ub-1b}, any sequence converging strongly to zero in $L^p$ can be added to $u$ or $(w_j)$ without changing the $\limsup$.

\vspace*{1ex}
\noindent\emph{Step 1: Approximation of $u$ with a piecewise constant field.}\\
Let $\gamma>0$, and define $u=0$ on $Q^d\setminus \Omega_1$.
By standard approximation results in $L^p$, there exist $h>0$ (small enough, with $1/h\in \NN$) and values $\xi^{(k)}\in \R^m$, $k=1, \ldots, h^{-d}$, such that on the uniform 
cubical grid with cell size $h$, given by $h^{-d}$ cubes $Q_k$ which are shifted copies of $h Q^d$ forming a pairwise disjoint convering of $Q^d$ (up to a set of measure zero), 
\[
	\norm{u^{(h)}-u}_{L^p(Q^d;\RR^m)}\leq \gamma
\]
for the piecewise constant function
\[
	u^{(h)}:=\sum_{k=1}^{h^{-d}} \chi_{Q_k} \xi^{(k)}.
\]
Notice that $u^{(h)}$ is not necessarily $\cA_0$-free.
Below, we also use the notation
\[
	\Omega_1^{(h)}:=\bigcup_{k\in K(h)} Q_k,\quad\text{where}\quad
	K(h):=\{k\in \N\mid Q_k\subset \Omega_1\}.
\]
Since $\absb{\Omega_1\setminus \Omega_1^{(h)}}\to 0$ as $h\todown 0$, we can assume that $\xi^{(k)}=0$ for every $k\notin K(h)$, i.e., $u^{(h)}=0$ on $Q^d\setminus \Omega_1^{(h)}$.

\vspace*{1ex}
\noindent\emph{Step 2: Piecewise constant $u$, assuming in addition that $\eps_j^{-\alpha}\in \NN$ for each $j\in \N$.}\\
Let the mesh size $h$, the associated cubes $Q_k$ and the values $\xi^{(k)}$ be fixed.
We claim that for every $\delta>0$, there exists $(w_j)\subset L^p(Q^d;\RR^m)$ such that 
$w_j\rightharpoonup 0$ in $L^p(Q^d;\R^m)$, $w_j=0$ on $Q^d\setminus \Omega_1^{(h)}$, 
$\cA_{\eps_j}w_j=0$ in $\TT^d$,
and 
\begin{align}\label{p:ub-2}
	\limsup_{j\to \infty} \int_{\Omega_1^{(h)}}f\Bigl(\frac{x'}{\eps_j^\alpha}, \frac{x_d}{\eps_j^{\alpha-1}}, u^{(h)} + w_j\Bigr)\dd{x} \leq 
	\int_{\Omega_1^{(h)}} 
f^{\rm hom}_{\Acal}(u^{(h)})\dd{x}+\delta.
\end{align}
Since $u^{(h)}=w_j=0$ on $\Omega_1\setminus \Omega_1^{(h)}$, \eqref{p:ub-2} and (H3) imply that 
\begin{align}\label{p:ub-2b}
	\limsup_{j\to \infty} \int_{\Omega_1}f\Bigl(\frac{x'}{\eps_j^\alpha}, \frac{x_d}{\eps_j^{\alpha-1}}, u^{(h)}+w_j\Bigr)\dd{x} \leq 
	\int_{\Omega_1} 
f^{\rm hom}_{\Acal}(u^{(h)})\dd{x}+\delta+c_1\absb{\Omega_1\setminus \Omega_1^{(h)}}.
\end{align}
For each $k\in K(h)$, Proposition~\ref{prop:ubA-constant} (applied with $\Omega_1=Q_k$) yields a sequence
$(v_j^{(k)})_j\subset L^p(Q^d;\RR^m)$ such that $\cA_{\eps_j}v_j^{(k)}=0$ in $\TT^{d-1}\times (0,1)$ (cf.~Remark~\ref{rem:ubA-constant}), $v_j^{(k)}\rightharpoonup 0$ in $L^p(Q^d;\R^m)$ as $j\to \infty$, and
\begin{align}\label{p:ub-3}
	\limsup_{j\to \infty} \int_{Q_k}f\Bigl(\frac{x'}{\eps_j^\alpha}, \frac{x_d}{\eps_j^{\alpha-1}}, \xi^{(k)} + v_j^{(k)}\Bigr)\dd{x} \leq 
	\int_{Q_k} 
f^{\rm hom}_{\Acal}(\xi^{(k)})\dd{x}.
\end{align}
Define 
\[
	(v_j^{(k)})^{\sharp}(x',x_d):=\eps_j^{(d-1)\alpha}\sum_{l'\in (\eps_j^\alpha \ZZ^{d-1})\cap [0,1)^{d-1}}
	v_j^{(k)}(x'+l',x_d),
\]
which is an $\eps_j^\alpha$-periodic function in each component of $x'$ (since $\eps_j^{-\alpha}\in \NN$), also satisfying $\cA_{\eps_j}(v_j^{(k)})^\sharp=0$ in $\TT^{d-1}\times (0,1)$ and $(v_j^{(k)})^\sharp \rightharpoonup 0$ in $L^p(Q^d;\R^m)$ as $j\to \infty$. By the convexity and periodicity of $f$ and a change of variables,
\begin{align*}
	&\int_{Q_k}f\Bigl(\frac{x'}{\eps_j^\alpha}, \frac{x_d}{\eps_j^{\alpha-1}}, \xi^{(k)} + (v_j^{(k)})^\sharp(x)\Bigr)\dd{x} \\
	&\qquad\leq \eps_j^{(d-1)\alpha}\sum_{l'\in (\eps_j^\alpha \ZZ^{d-1})\cap [0,1)^{d-1}}  
	\int_{Q_k}f\Bigl(\frac{x'}{\eps_j^\alpha}, \frac{x_d}{\eps_j^{\alpha-1}}, \xi^{(k)}+v_j^{(k)}(x'+l',x_d)\Bigr)\dd{x} \\
	&\qquad=\int_{Q_k}f\Bigl(\frac{x'}{\eps_j^\alpha}, \frac{x_d}{\eps_j^{\alpha-1}}, \xi^{(k)} + v_j^{(k)}(x)\Bigr)\dd{x}.
\end{align*}
Therefore, we may assume without loss of generality that $v_j^{(k)}=(v_j^{(k)})^\sharp$ in \eqref{p:ub-3}, i.e., that $v_j^{(k)}$ is 
$\eps_j^\alpha$-periodic in its first $d-1$ variables. Also notice that $((v_j^{(k)})^\sharp)_j$ inherits the $p$-equintegrability of
$(v_j^{(k)})_j$.

We will now cut off $v_j^{(k)}$ near $\partial Q_k$, controlling the effect on $\cA_{\eps_j}v_j^{(k)}$ with the localization lemmas of the previous subsection. Recall that the cell averages introduced in Lemma~\ref{lem:localize},
\[
	\bar{v}_j^{(k)}(x):=\int_{Q^{d-1}} v_j^{(k)}(\eps_j^\alpha x',x_d)\dd{x'},\qquad x\in Q^d,
\]
satisfy $\cA_{\eps_j} \bar{v}_j^{(k)}=\frac{1}{\eps_j}A^{(d)}\partial_d \bar{v}_j^{(k)}=0$ on $\TT^{d-1}\times (0,1)$ as shown in Remark~\ref{rem:proplocal}. 
As a consequence, with $P^{(d)}:\RR^m\to \RR^m$ denoting the orthogonal projection of 
$\R^m$ onto $\ker A^{(d)}$ (so that $A^{(d)}P^{(d)}=0$ and $A^{(d)}:(I-P^{(d)})(\RR^m)\to A^{(d)}(\RR^m)$ is invertible),
\[
	\partial_d (I-P^{(d)})\bar{v}_j^{(k)}=0\quad\text{on $Q^d$},
\]
so $(I-P^{(d)})\bar{v}_j^{(k)}$ is constant for every $j\in \N$.
As $\bar{v}_j^{(k)}\rightharpoonup 0$ in $L^p(Q^d;\R^m)$ for $j\to \infty$ just like $(v_j^{(k)})_j$, one therefore obtains that
\begin{align}\label{projection_convergence}
	\norm{(I-P^{(d)})\bar{v}_j^{(k)}}_{L^p(Q^d;\RR^m)} {\To} \,0 \qquad \text{as $j\to \infty$}.
\end{align}
For each $k$, choose functions $\eta^{(k)}$ and $\varrho^{(k)}$ as in Lemma~\ref{lem:localize} and Lemma~\ref{lem:localize2}, respectively, 
such that their product $\eta^{(k)}\varrho^{(k)}$ is supported in $Q_k$, and the set where $\eta^{(k)}\varrho^{(k)}\neq 1$ in 
$Q_k$ is small enough so that for all sufficiently large $j$,
\begin{align}\label{p:ub-4}
	\abs{
	\int_{Q_k}f\Bigl(\frac{x'}{\eps_j^\alpha}, \frac{x_d}{\eps_j^{\alpha-1}}, \xi^{(k)}+v_j^{(k)}(x)\Bigr)\dd{x}
	-
	\int_{Q_k}f\Bigl(\frac{x'}{\eps_j^\alpha}, \frac{x_d}{\eps_j^{\alpha-1}}, \xi^{(k)} + \tilde{w}_j(x)\Bigr)\dd{x}
	}\leq h^d\delta,
\end{align}
where
\[
	\tilde{w}_j:=\sum_{k\in K(h)}
	\varrho^{(k)}\eta^{(k)}\left[\big(v_j^{(k)}-\bar{v}_j^{(k)}\big)+P^{(d)} \bar{v}_j^{(k)}\right] \in L^p(Q^d;\R^m).
\]
Here, we use that $(v_j^{(k)})_j$ is $p$-equiintegrable and bounded in $L^p(Q^d;\R^m)$ for each $k$, as well as~\eqref{projection_convergence}, and the uniform continuity of $f$ as an operator from $L^p$ to $L^1$ restricted to bounded subsets. 
Due to $A^{(d)}P^{(d)}=0$, we also have that
\[
	\cA_{\eps_j} \Big[\sum_{k\in K(h)} \eta^{(k)} P^{(d)} \bar{v}_j^{(k)} \Big]
	=\frac{1}{\eps_j}\partial_d \sum_{k\in K(h)} \eta^{(k)} A^{(d)}P^{(d)} \bar{v}_j^{(k)}=0
	\quad\text{in $\TT^d$.}
\]
Therefore, Lemma~\ref{lem:localize} (with $\tau_j:=\eps_j^\alpha$, exploiting that $\alpha>1$) and Lemma~\ref{lem:localize2} are successively applicable for each $k$, and we get that
\[
	\norm{ \cA_{\eps_j} \tilde{w}_j}_{W^{-1,p}(\TT^{d};\RR^l)}{\To}0\qquad \text{as $j\to \infty$}.
\]
The projection 
\[
	w_j:=\Pcal_{\cA_{\eps_j}}\tilde{w}_j, \qquad j\in \N,
\]
according to Lemma~\ref{theorem_projection}
thus yields that $w_j\in L^p(Q^d;\R^m)$ with $\cA_{\eps_j} w_j=0$ in $\TT^d$ and $w_j-\tilde{w}_j\to 0$ in $L^p(Q^d;\R^m)$, so $w_j\weakly 0$ in $L^p(Q^d;\R^m)$ by~\eqref{projection_convergence} and the weak convergence of  $(\bar{v}_j^{(k)})_j$, $(v_j^{(k)})_j$ to zero. 
In particular, $(w_j)$ inherits the $p$-equiintegrability of $(\tilde{w}_j)$.
Finally, 
in view of the definitions of $u^{(h)}$ and $w_j$, \eqref{p:ub-4}, and~\eqref{p:ub-3}, it follows that
\begin{align*}
	&\limsup_{j\to \infty} \int_{\Omega_1^{(h)}}f\Bigl(\frac{x'}{\eps_j^\alpha}, \frac{x_d}{\eps_j^{\alpha-1}}, u^{(h)} + w_j\Bigr)\dd{x} \\
	&\qquad=\limsup_{j\to \infty} \sum_{k\in K(h)}
	\int_{Q_k}f\Bigl(\frac{x'}{\eps_j^\alpha}, \frac{x_d}{\eps_j^{\alpha-1}}, \xi^{(k)} + \tilde{w}_j(x) \Bigr)\dd{x}\\
	&\qquad \leq\sum_{k\in K(h)} \limsup_{j\to \infty}
	\int_{Q_k}f\Bigl(\frac{x'}{\eps_j^\alpha}, \frac{x_d}{\eps_j^{\alpha-1}}, \xi^{(k)} + v_j^{(k)} \Bigr)\dd{x} + \delta h^d \# K(h)\\
	&\qquad \leq \sum_{k\in K(h)} 
	\int_{Q_k} 
f^{\rm hom}_{\Acal}(\xi^{(k)})\dd{x}+\delta = \int_{\Omega_1^{(h)}} f^{\rm hom}_{\Acal_0}(u^{(h)})\dd{x}  + \delta,
\end{align*}
which proves~\eqref{p:ub-2}.

\vspace*{1ex}
\noindent\emph{Step 3: Piecewise constant $u$, in case $\eps_j^{-\alpha}\notin \NN$.}\\
Let $\delta>0$ and $(\tilde{w}_j)$ denote the sequence of Step 2, applied with $\tilde{\eps}_j:=\theta_j \eps_j$ for $j\in \N$, where
\[
	\theta_j:=\Big(\frac{\eps_j^{-\alpha}}{\lceil \eps_j^{-\alpha} \rceil}\Big)^{\frac{1}{\alpha}}\leq 1.
\]	
By construction, $\theta_j\to 1$ as $j\to \infty$, $\tilde{\eps}_j^{-\alpha}=\lceil \eps_j^{-\alpha} \rceil\in \NN$. We set
\[
	w_j(x):=\tilde{w}_j(\theta_j^\alpha x',\theta_j^{\alpha-1} x_d), \qquad x\in Q^d, j\in \N,
\]
which defines a $p$-equiintegrable sequence in $\Ucal_{\Acal_{\eps_j}}$, since $\cA_{\tilde{\eps}_j}\tilde{w}_j=0$ in $\TT^d$. 
A change of variables shows that
\begin{align*}
&\limsup_{j\to \infty} \int_{\Omega_1}f\Bigl(\frac{x'}{\eps_j^\alpha}, \frac{x_d}{\eps_j^{\alpha-1}}, u^{(h)}(x)+ w_j(x)\Bigr)\dd{x} \\
&\qquad=
\limsup_{j\to \infty} \int_{\theta_j^\alpha \omega\times (0,\theta_j^{\alpha-1})}f\Bigl(\frac{x'}{\tilde{\eps}_j^\alpha}, \frac{x_d}{\tilde{\eps}_j^{\alpha-1}},
\tilde{w}_j(x)+u^{(h)}(\theta_j^{-\alpha}x',\theta_j^{1-\alpha} x_d)\Bigr)\dd{x}.
\end{align*}
The integrand on the right-hand side is equiintegrable due to (H2) and the $p$-equiintegrability of $(w_j)$. 
Since $\theta_j^\alpha \omega\times (0,\theta_j^{\alpha-1})$ and $\Omega_1=\omega\times (0,1)$ only differ on a set whose measure converges to zero, this means we can change the domain of integration back to $\Omega_1$, using the convention that $u^{(h)}=0$ on $\RR^d\setminus \Omega_1$.
In addition, $u^{(h)}(\theta_j^{-\alpha}x',\theta_j^{1-\alpha}x_d)\to u^{(h)}(x',x_d)$ in $L^p(\RR^d;\RR^m)$. 
In view of \eqref{p:ub-2b}, we therefore conclude that
\begin{align*}
	&\limsup_{j\to \infty} 
		\int_{\Omega_1}f\Bigl(\frac{x'}{\eps_j^\alpha}, \frac{x_d}{\eps_j^{\alpha-1}}, u^{(h)}(x) + w_j(x)\Bigr)\dd{x}  \\
	&\qquad=\limsup_{j\to \infty} 
		\int_{\Omega_1}f\Bigl(\frac{x'}{\tilde{\eps}_j^\alpha}, \frac{x_d}{\tilde{\eps}_j^{\alpha-1}}, 
	u^{(h)}(\theta_j^{-\alpha}x',\theta_j^{1-\alpha}x_d)+\tilde{w}_j(x)\Bigr)\dd{x} \\
	&\qquad=\limsup_{j\to \infty} 
		\int_{\Omega_1}f\Bigl(\frac{x'}{\tilde{\eps}_j^\alpha}, \frac{x_d}{\tilde{\eps}_j^{\alpha-1}}, 
		u^{(h)}(x)+\tilde{w}_j(x)\Bigr)\dd{x}\\
	&\qquad\leq \int_{\Omega_1} f^{\rm hom}_{\Acal}(u^{(h)})\dd{x}+\delta+c_1\absb{\Omega_1\setminus \Omega_1^{(h)}}.
\end{align*}

\noindent\emph{Step 4: Conclusion by diagonalization.}\\
According to Step~1, there is a sequence of mesh sizes $h_i\todown 0$ for $i\to\infty$ such that
$\norm{u-u^{(h_i)}}_{L^p(Q^d;\R^m)}\leq \frac{1}{i}$. Therefore,
\[	
	\int_{\Omega_1}f^{\rm hom}_{\Acal}\big(u^{(h_i)}\big)\dd{x}
	-\int_{\Omega_1}f^{\rm hom}_{\Acal} \big(u\big)\dd{x}\ {\To} \ 0\qquad \text{as $i\to \infty$},
\]
and, with the sequence $(w_j)=(w_j^{(h_i)})_j$ from Step~2 (or Step~3) with $\delta:=\frac{1}{i}$,
\[	
	\int_{\Omega_1}f\Bigl(\frac{x'}{\eps_j^\alpha}, \frac{x_d}{\eps_j^{\alpha-1}}, u^{(h_i)}+w_j^{(h_i)}\Bigr)\dd{x}
	-\int_{\Omega_1}f\Bigl(\frac{x'}{\eps_j^\alpha}, \frac{x_d}{\eps_j^{\alpha-1}}, u + w_j^{(h_i)}\Bigr)\dd{x}\ {\To}0
\]
as $i\to \infty$, uniformly in $j$. By Step 2 (or Step 3), we also have for all $i\in \N$ that
\begin{align*}
	\limsup_{j\to \infty} 
		\int_{\Omega_1}f\Bigl(\frac{x'}{\eps_j^\alpha}, \frac{x_d}{\eps_j^{\alpha-1}}, u^{(h_i)} + w_j^{(h_i)}\Bigr)\dd{x}  
	&\leq \int_{\Omega_1} f^{\rm hom}_{\Acal}(u^{(h_i)})\dd{x}+\frac{1}{i}+c_1\absb{\Omega_1\setminus \Omega_1^{(h_i)}}.
\end{align*}
Clearly, $\frac{1}{i}+c_1\absb{\Omega_1\setminus \Omega_1^{(h_i)}}\to 0$ as $i\to \infty$, and
by coercivity (H4), $(w_j^{(h_i)})_{j,i}$ is bounded in $L^p$ (uniformly in $j$ and $i$). 
Therefore, we can select an appropriate diagonal subsequence $(w_j^{(h_{i_j})})_j$ which yields the assertion.
\end{proof}

\section{Thin films with coarse heterogeneity ($\alpha\leq 1$)}\label{sec:alpha<1}

As pointed out in Section~\ref{sec:regimes&results}, the regime $\alpha\leq 1$ is qualitatively different from the case $\alpha>1$, where we gave a local characterization of the simultaneous $\Gamma$-limit of $(F_{\eps, \eps^\alpha})$.
Here, we will provide, for any constant-rank operator $\Acal$ and every $\alpha\leq 1$, an explicit example of an energy density $f$ with properties (H0)-(H5) such that $\Gamma$-$\liminf_{\eps\to 0} F_{\eps, \eps^\alpha}$ is nonlocal.
The key idea behind the construction is finding macroscopically compatible phases that consist of oscillating, microscopically incompatible phases, and thus create additional line energy when joined together. 

\begin{remark}
Let $\alpha=1$. We assume (A1)-(A4), H(0), (H2)-(H5), and in addition that $f$ is independent of $z'$ (compare Remark~\ref{lem:properties_fhom} b)). 
Then it is a straightforward application of~\cite[Theorem~1.1]{KR14} to derive that
\begin{align*}
\Gamma\text{-}\lim_{\eps\to 0} F_{\eps, \eps}(u)= 
\begin{cases}
\int_{\Omega_1} f(x_d, u(x))\dd{x} & \text{if $\Acal_0 u=0$ in $\Omega_1$,}\\
\infty & \text{otherwise,}
\end{cases}
\qquad u\in L^p(\Omega_1;\R^m).
\end{align*}
Of course, this limit functional is local. 
If $f$ is continuous in $z_d$, an analogous observation
can be made for $\alpha<1$.
Hence, for  a counterexample to local behavior we need to exploit decisively the freedom of choosing $f$ to be $z'$-dependent.
\end{remark}

\subsection{Construction tools}
Before we can formulate the counterexample the following preparations are needed.
\begin{lemma}\label{lem:general_counterexample}\label{lem:generalized_vectorquartett}
Let $\Acal$ be a constant-rank operator with $\rank \Abb(\eta)=r$ for all $\eta\in \R^d\setminus\{0\}$ that satisfies (A2).
If $m>r$ and $n\in \Scal^{d-1}:=\{\eta\in \R^d:\abs{\eta}=1\}$ such that $\ker \Abb(n)\neq \ker \Abb(e_d)$, then there are vectors 
$\xi_1, \xi_2, \sigma_1, \sigma_2\in \R^m$ such that
\begin{align}
(\sigma_1+\sigma_2) -(\xi_1+\xi_2) &\in \ker \Abb_0(e_d)\setminus\{0\}, \label{lem:counterexample1}\\
\xi_1-\xi_2 &\in \ker\Abb(n),\label{lem:counterexample2}\\
\sigma_1-\sigma_2 &\in \ker\Abb(n),\nonumber\\
\xi_1-\sigma_1\notin \ker \Abb_0(e_d)=\ker \Abb(e_d) &\text{ or }\xi_2-\sigma_2\notin \ker \Abb_0(e_d)=\ker\Abb(e_d),\label{lem:counterexample4}
\end{align} 
and 
\begin{align}\label{lem:counterexample5}
\xi_1-\sigma_1, \text{ $\xi_2-\sigma_2$ are linearly independent.}
\end{align}
\end{lemma}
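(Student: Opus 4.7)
The plan rests on a simple linear-algebra observation: the constant-rank hypothesis (A1) forces $\dim \ker\Abb(e_d) = \dim \ker\Abb(n) = m-r$, which is at least $1$ because $m > r$, and two equidimensional subspaces that differ cannot be nested. Assumption (A2) on the block form of $A^{(d)}$ also gives $\ker\Abb_0(e_d) = \ker A^{(d)}_+ = \ker A^{(d)} = \ker\Abb(e_d)$, so writing $V := \ker\Abb(e_d)$ and $W := \ker\Abb(n)$, conditions \eqref{lem:counterexample1}--\eqref{lem:counterexample5} demand a quadruple for which $\xi_1\pm\xi_2$ and $\sigma_1\pm\sigma_2$ respect $V$ and $W$ in a prescribed pattern.

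To keep the construction lean I will set $\sigma_1 = \sigma_2 := 0$. The condition $\sigma_1-\sigma_2 \in \ker\Abb(n)$ then becomes trivial, and \eqref{lem:counterexample1}, \eqref{lem:counterexample2}, \eqref{lem:counterexample4} reduce to asking for a pair $\xi_1,\xi_2 \in \R^m$ with $\xi_1+\xi_2 \in V\setminus\{0\}$, $\xi_1-\xi_2 \in W$, at least one $\xi_i \notin V$, and $\{\xi_1,\xi_2\}$ linearly independent (condition \eqref{lem:counterexample5}).

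From $V \neq W$ together with $\dim V = \dim W$ I conclude $W \not\subseteq V$, so a vector $b \in W\setminus V$ exists. I next pick any $a \in V\setminus\{0\}$; since $b \notin V$ while $a \in V$, the pair $\{a,b\}$ is automatically linearly independent. Finally I set $\xi_1 := \tfrac{1}{2}(a+b)$ and $\xi_2 := \tfrac{1}{2}(a-b)$, which gives $\xi_1+\xi_2 = a \in V\setminus\{0\}$ and $\xi_1-\xi_2 = b \in W$, and $\{\xi_1,\xi_2\}$ inherits linear independence from $\{a,b\}$. The remaining check is $\xi_1 \notin V$: if $\tfrac{1}{2}(a+b) \in V$, then $b = 2\cdot\tfrac{1}{2}(a+b) - a \in V$, contradicting $b \in W\setminus V$.

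I do not anticipate a genuine obstacle: the argument is essentially a single linear-algebra observation combined with the convenient normalization $\sigma_1 = \sigma_2 = 0$. The step worth flagging is the passage from $V \neq W$ to $W \not\subseteq V$, which uses the constant-rank hypothesis in an essential way — without equal dimensions, the bare inequality $W \neq V$ would not by itself guarantee an element of $W$ lying outside $V$.
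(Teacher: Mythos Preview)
Your proof is correct and follows essentially the same route as the paper: both pick a vector in $\ker\Abb(n)\setminus\ker\Abb(e_d)$ (your $b$, the paper's $v$) and a nonzero vector in $\ker\Abb(e_d)$ (your $a$, the paper's $z$), then build the $\xi_i$ as combinations of these. Your choice $\sigma_1=\sigma_2=0$ is the special case $\beta=0$ of the paper's construction (where $\sigma_1-\sigma_2=\beta v$ with free $\beta$), and it streamlines the verification nicely.
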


\begin{remark}\label{rem:choice_n}
Requiring the existence of $n\in \Scal^{d-1}$ such that $\ker \Abb(n)\neq \ker \Abb(e_d)$ is equivalent to postulating that 
not all kernels of the matrices $A^{(k)}\in \R^{l\times m}$ coincide, or to saying that $\ker \Abb(\eta)$ is not constant in $\eta\neq 0$.
In fact, the constant-rank property of $\Acal$ entails the equivalence of the following two statements:
\begin{align*}
\ker \Abb(n)=\ker \Abb(e_d) \quad \text{for all $n\in \Scal^{d-1}$,}\end{align*}
if and only if
\begin{align*}
\ker A^{(k)}=\ker A^{(d)}\quad \text{for all $k=1, \ldots, d-1$.}
\end{align*}
In particular, without loss of generality $n$ can be chosen to be a standard unit vector $e_i\neq e_d$
for some $i=1, \ldots, d-1$. 
\end{remark}

\begin{proof}
Recall that $r\leq \min\{m,l\}$. 
In view of $m>r$, together with \cite[Section~2.4]{KR14}, we find that 
$\ker \Abb_0(e_d)=\ker \Abb(e_d)=\ker A^{(d)} \neq \{0\}$. So, let $z\in \ker \Abb_0(e_d)\setminus\{0\}\subset \R^m$.

On the other hand, we fix a
\begin{align*}
v\in \ker \Abb(n) \setminus \ker \Abb(e_d).
\end{align*}
The existence of such a $v$ follows from the assumption that $\ker \Abb(n)\neq \ker \Abb(e_d)$, together with 
the fact that $\dim \ker \Abb(n)=\dim \ker \Abb(e_d)$ by the constant-rank property of $\Acal$ and the rank-nullity theorem.

Let $\alpha, \beta\in \R$ with $\alpha\neq 0$. 
Further, we let $\sigma_1, \sigma_2\in \R^m$ be such that $\sigma_1-\sigma_2=\beta v$, and set
\begin{align*}
\xi_1 =\sigma_1+ \alpha v+z\qquad \text{and}\qquad \xi_2=\sigma_2 -\alpha v +z.
\end{align*}

Then, a straightforward computation shows that these quantities satisfy all the required properties.
Indeed,
\begin{align*}
\xi_1-\sigma_1=\alpha v+z &\notin \ker \Abb_0(e_d),\\  
\xi_2-\sigma_2=-\alpha v+z &\notin\ker  \Abb_0(e_d),
\end{align*}
are linearly independent, since $v$ and $z$ are, and $\alpha\neq 0$. Besides,
\begin{align*}
(\sigma_1+\sigma_2) -(\xi_1+\xi_2) =-2z  &\in \ker \Abb_0(e_d)\setminus\{0\},\\
\sigma_1-\sigma_2 = \beta v & \in \ker \Abb(n),\\
\xi_1-\xi_2= (\beta+ 2\alpha) v & \in \ker \Abb(n).
\end{align*}
\end{proof}

\begin{example}\label{ex:nonlocaldivcurl}
a) In the case $\Acal=\diverg$, we have $m=d>1$, $l=1$ and $r=1$. Observing that 
$\ker \Abb_{\diverg}(\xi)=\{v\in \R^d:\xi\cdot v=0\}$ for all $\xi\in \R^d\setminus \{0\}$ guarantees that
\begin{align*}\ker \Abb_{\diverg}(e_i)\neq \ker \Abb_{\diverg}(e_d).
\end{align*} for all $i=1, \ldots, d-1$.

By choosing $n=e_1$, $z=-e_1$, $v=e_d$, $\beta=-2$ and $\alpha=3$ one finds in particular that 
the vectors $\xi_1=2 e_d$, $\xi_2=-2e_d$, $\sigma_1=e_1-e_d$, and $\sigma_2=e_1+e_d$
feature the properties of Lemma~\ref{lem:generalized_vectorquartett}.

b) For $\Acal=\curl$ applied to matrix-valued functions $\R^d\to \R^{n\times d}\cong \R^m$ with $d>1$, 
one has $m=nd$, $l=n\frac{d(d-1)}{2}$ and $r=n(d-1)$. From
\begin{align*}
\ker \Abb_{\curl} (\xi)=\{a\otimes \xi: a\in \R^n\}\subset \R^{n\times d}
\end{align*}
along with the choice of the unit vector $n=e_1$ and the quantities $z=e_1\otimes e_d$, $v=e_1\otimes e_1$, $\alpha=1$ and $\beta=2$, 
we infer that $\xi_1=3e_1\otimes e_1+ 2 e_1\otimes e_d$, $\xi_2=-e_1\otimes e_1 + 2e_1\otimes e_d$, $\sigma_1=2e_1\otimes e_1 +e_1\otimes e_d$ and $\sigma_2=e_1\otimes e_d$
satisfy Lemma~\ref{lem:generalized_vectorquartett}.
\end{example}

\subsection{Counterexample to locality}
In the following, we assume for simplicity that $\Omega_1=Q^d$.
With the quantities of Lemma~\ref{lem:general_counterexample}, supposing that $n\neq e_d$ is a standard unit vector in $\R^d$ (compare Remark~\ref{rem:choice_n}) and $p>1$, we define the function $g:
\R^{d-1}\times \R^m\to \R$ by
\begin{align*}
g(x', v)=\begin{cases}
\abs{v-\xi_1}^{p/2}\abs{v-\sigma_1}^{p/2} & \text{if $x'\cdot n'\leq \frac{1}{2}$,}\\
\abs{v-\xi_2}^{p/2}\abs{v-\sigma_2}^{p/2} & \text{if $x'\cdot n'>\frac{1}{2}$}, 
\end{cases}
\qquad x'\in Q^{d-1}, v\in \R^m,
\end{align*} 
and periodic extension in the first variable.
Let us denote the convexification of $g$ with respect to the second variable by $f$, so 
\begin{align}\label{def:f}
f(x', \frarg)=[g(x', \frarg)]^{\rm c}=:g^c(x', \frarg)
\end{align}
for all $x'\in \R^{d-1}$. Notice the growth and coercivity properties 
\begin{align}\label{coercivity_g}
c\abs{v}^p-C \leq f(x', v)\leq C(1+\abs{v}^p), \qquad x'\in \R^{d-1}, v\in \R^m,
\end{align}
with constants $c, C>0$.

For $\eps>0$ and an open set $D\subset\Omega_1=Q^d$, we define the functional 
$F_\eps^\sharp(\frarg;D):L^p(\Omega_1; \R^m)\to [0, \infty]$ by
\begin{align*}
F_\eps^\sharp(u;D):=\begin{cases}
\int_D f\bigl(\frac{x'}{\eps^\alpha}, u(x)\bigr)\dd{x} & \text{if $\Acal_\eps u =0$ in $\T^{d-1}\times (0,1)$,}\\
\infty & \text{else,}
\end{cases}\qquad u\in L^p(\Omega_1;\R^m).
\end{align*}
For a sequence $\eps_j\todown 0$, the $\Gamma$-$\liminf$ of $(F_{\eps_j}^\sharp)$ is denoted by
\begin{align*}
F_0^\sharp(u;D)=\inf \{\liminf_{j\to \infty} F_{\eps_j}^\sharp(u_j; D) : (u_j)\subset L^p(\Omega_1;\R^m), u_j\weakly u \text{ in $L^p(\Omega_1; \R^m)$}\}.
\end{align*}
The following result implies Theorem~\ref{theo:simultaneous_nonlocal}, taking Lemma~\ref{lem:localize2} on the localization in $x'$ into account.

\begin{proposition}[Nonlocal character of \boldmath{$F_0^\sharp$} for \boldmath{$\alpha\leq 1$}]\label{prop:nonlocal_alpha<1}
Let $\Acal$ and $n$ be as in Lemma~\ref{lem:generalized_vectorquartett}, assuming that $n\neq e_d$ is a 
standard unit vector in $\R^d$, and consider a given sequence {$\eps_j\todown 0$}. Further, let $f:\R^{d-1}\times \R^m \to \R$ be the function defined in \eqref{def:f}, 
and $\Omega_1=Q^{d}$.
For $x\in \Omega_1$ let
\begin{align*}
u_0(x)=\begin{cases} \bar{\sigma}& \text{if $x_d>1/2$,}\\
\bar{\xi} & \text{if $x_d\leq 1/2$},
\end{cases}
\end{align*} 
where $\bar{\xi}=\frac{1}{2}(\xi_1+\xi_2)$ and $\bar{\eta}=\frac{1}{2}(\sigma_1+\sigma_2)$.

Then,  
\begin{align*}
F_0^\sharp(u_0; \omega \times (0,1/2)) = 0 = F_0^\sharp(u_0; \omega \times (1/2, 1)).
\end{align*}

If $\alpha\leq 1$ and $\eps_j^\alpha=\frac{1}{j}$ for all $j\in \N$, then
$F_0^\sharp(u_0; \Omega_1)>0.$
\end{proposition}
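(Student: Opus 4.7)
The plan is to establish the two vanishing claims $F_0^\sharp(u_0;D_i)=0$ by an explicit construction and to derive the positivity $F_0^\sharp(u_0;\Omega_1)>0$ by a two-scale contradiction argument.

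For the first two equalities, I would exploit that $f=g^c$ vanishes exactly on the segment $[\xi_1,\sigma_1]$ on $\{y'\cdot n'\leq 1/2\}$ and on $[\xi_2,\sigma_2]$ on $\{y'\cdot n'> 1/2\}$. By Remark~\ref{rem:choice_n} I may take $n=e_i$ for some $i\leq d-1$. Let $v_\eps^\xi(x):=\xi_1$ on $\{(x_i/\eps^\alpha)\,\mathrm{mod}\,1\in[0,1/2)\}$, $\xi_2$ otherwise, and $v_\eps^\sigma$ analogously. Because $\xi_1-\xi_2,\sigma_1-\sigma_2\in \ker\Abb(n)=\ker A^{(i)}$ and these fields depend only on $x_i$, both are $\Acal_\eps$-free, and by construction $f(x'/\eps^\alpha,v_\eps^\xi)\equiv 0\equiv f(x'/\eps^\alpha,v_\eps^\sigma)$. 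Property \eqref{lem:counterexample1} gives $\bar\sigma-\bar\xi\in\ker A^{(d)}\setminus\{0\}$, so the step field $w(x):=(\bar\sigma-\bar\xi)\chi_{\{x_d>1/2\}}$ is itself $\Acal_\eps$-free (its only jump is annihilated by $A^{(d)}$). Then $u_\eps^{(1)}:=v_\eps^\xi+w\in\Ucal_{\Acal_{\eps_j}}$, $u_\eps^{(1)}\weakly \bar\xi+w=u_0$ in $L^p(\Omega_1;\R^m)$, and the energy on $D_1=\omega\times(0,1/2)$ equals zero. The construction for $D_2$ is the mirror image $u_\eps^{(2)}:=v_\eps^\sigma+(\bar\xi-\bar\sigma)\chi_{\{x_d<1/2\}}$.

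For the nonlocality statement, I would assume for contradiction that there exist $u_j\in\Ucal_{\Acal_{\eps_j}}$ with $u_j\weakly u_0$ and $\int_{\Omega_1}f(x'/\eps_j^\alpha,u_j)\dd x\to 0$. Coercivity \eqref{coercivity_g} yields $L^p$-boundedness, and Nguetseng-type two-scale compactness (with fast variable $y'=x'/\eps_j^\alpha\in Q^{d-1}$, using $1/\eps_j^\alpha=j\in\N$) extracts a subsequence with $u_j\stackrel{2}{\weakly}U\in L^p(\Omega_1\times Q^{d-1};\R^m)$ and $\int_{Q^{d-1}}U(x,\cdot)\dd y'=u_0(x)$. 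A two-scale analogue of Lemma~\ref{lem:lsc_threescale} combined with the energy vanishing forces $f(y',U(x,y'))=0$ a.e.; hence $U(x,y')\in[\xi_1,\sigma_1]$ on $\{y'\cdot n'\leq 1/2\}$ and $U(x,y')\in[\xi_2,\sigma_2]$ on the complement. Writing $U=\alpha_i\xi_i+(1-\alpha_i)\sigma_i$ on the respective half-cell and computing the $y'$-average, the identity
\[
    u_0(x)-\bar\sigma = \bar\alpha_1(x)(\xi_1-\sigma_1)+\bar\alpha_2(x)(\xi_2-\sigma_2)
\]
together with the linear independence \eqref{lem:counterexample5} pins down $\bar\alpha_1,\bar\alpha_2\in[0,1/2]$ uniquely: on $D_2$ both vanish, on $D_1$ both equal $1/2$. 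Saturation of these bounds then forces $\alpha_i\in\{0,1\}$ pointwise, i.e., $U|_{D_1}(x,y')=v^\xi(y')$ and $U|_{D_2}(x,y')=v^\sigma(y')$.

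The final step, which I expect to be the main technical obstacle, is to derive a two-scale distributional constraint from $\Acal_{\eps_j}u_j=0$ and close the contradiction. Rewriting the PDE as $\partial_d(A^{(d)}u_j)=-\eps_j\Acal'u_j$ and pairing with test functions $\Phi(x)=\phi_0(x)\psi(x'/\eps_j^\alpha)$, the two relevant prefactors are $\eps_j$ (for slow derivatives) and $\eps_j^{1-\alpha}$ (for fast $y'$-derivatives). Since $\eps_j^{1-\alpha}\to 0$ for $\alpha<1$ and equals $1$ for $\alpha=1$, the limit yields
\[
    \partial_d^x\bigl(A^{(d)}U\bigr)+\chi_{\{\alpha=1\}}\Acal'^{y'}U=0\qquad\text{on }\Omega_1\times\T^{d-1}
\]
in the distributional sense. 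Now $U|_{D_i}(x,y')=v_i(y')$ depends only on $y'$, so $\Acal'^{y'}U=\chi_{D_1}\Acal'^{y'}v^\xi+\chi_{D_2}\Acal'^{y'}v^\sigma=0$ (both $v^\xi,v^\sigma$ being one-dimensional oscillations in direction $n$ with jumps in $\ker A^{(i)}$), while the $x_d$-derivative piece produces precisely $A^{(d)}(v^\sigma(y')-v^\xi(y'))\delta(x_d-1/2)$. Vanishing of this concentrated distribution forces $A^{(d)}(\sigma_1-\xi_1)=A^{(d)}(\sigma_2-\xi_2)=0$, contradicting property \eqref{lem:counterexample4}. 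The delicate point is to isolate the interface delta rigorously within the two-scale framework and to verify that the $\Acal'^{y'}$-piece cannot contribute a compensating concentrated term at $\{x_d=1/2\}$; once this is done the contradiction is algebraic.
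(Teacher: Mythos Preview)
Your proposal is correct and, for the positivity statement, follows a route genuinely different from the paper's. The vanishing claims are handled essentially the same way; your addition of the step field $w=(\bar\sigma-\bar\xi)\chi_{\{x_d>1/2\}}$ is a clean elaboration that makes convergence to $u_0$ on all of $\Omega_1$ explicit, whereas the paper instead invokes the locality property $F_0^\sharp(u_0;D_1)=F_0^\sharp(\bar\xi;D_1)$.

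For $F_0^\sharp(u_0;\Omega_1)>0$ the two arguments diverge. The paper does not use two-scale convergence. Instead it (i) averages $v_j$ over $j^{d-1}$ shifts to obtain a $\frac{1}{j}$-periodic field $v_j^\sharp$ without increasing the energy (by convexity), (ii) rescales to $\tilde v_j(x)=v_j^\sharp(x'/j,x_d)$, (iii) uses Young measures to show $\tilde v_j\to\tilde v_0$ strongly in $L^q$, where $\tilde v_0$ is exactly your four-valued step field, and (iv) proves by direct duality estimates that $\norm{\Acal_{\eps_j}v_j}_{W^{-1,q}}\geq\norm{\Acal_{\eps_j^{1-\alpha}}\tilde v_j}_{W^{-1,q}}$, the latter converging to $\norm{\Acal_0\tilde v_0}_{W^{-1,q}}>0$ (or $\norm{\Acal\tilde v_0}_{W^{-1,q}}>0$ for $\alpha=1$) via Lemma~\ref{lem:convergence_AepsA0}. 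Your approach replaces this unfolding--Young-measure--norm-estimate chain with weak two-scale compactness, the two-scale lower semicontinuity inequality, and a direct distributional two-scale PDE constraint; the same four-valued limit is identified (your saturation argument via \eqref{lem:counterexample5} mirrors the paper's Step~3c), and the contradiction is read off from the jump at $x_d=1/2$. Your worry about the $\Acal'^{y'}$-piece for $\alpha=1$ is unfounded: since $U$ is piecewise constant in $x_d$ and $\Acal'^{y'}v^\xi=\Acal'^{y'}v^\sigma=0$ on $\T^{d-1}$, the term $\Acal'^{y'}U$ is an $L^\infty$ function of $x_d$ and cannot cancel a Dirac mass. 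What the paper's approach buys is a quantitative lower bound $\norm{\Acal_{\eps_j}v_j}_{W^{-1,q}}\geq c_q>0$ along the sequence; what yours buys is a cleaner passage to the limit PDE without the intermediate rescaling and without Lemma~\ref{lem:convergence_AepsA0}.
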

\begin{remark}
More general domains ($\Omega_1\neq Q^{d}$) and general sequences ($\eps_j^\alpha\neq \frac{1}{j}$) are possible, at the expense of a few additional technicalities in the proof.
\end{remark}
\begin{proof} Notice that by~\eqref{lem:counterexample1} and Lemma~\ref{lem:Afree_jumps} one has
$\Acal_0 u_0=0$ in $\T^{d-1}\times (0,1)$.
Without loss of generality we assume in the following that $n=e_1$. 
We split the proof in three steps. 

\textit{Step~1: $F_0^\sharp(u_0; \omega\times (0,1/2))=0$.} 
Consider the $Q^d$-periodic function 
\begin{align*}
v(y)= \begin{cases} 
\xi_1 & \text{ if $y_1=y\cdot n\leq 1/2$,}\\ 
\xi_2 & \text{ if $y_1=y\cdot n > 1/2$,}
\end{cases} \qquad y\in Q^d,
\end{align*}
and define $v_{j}(x)=v(\frac{x}{\eps_j^\alpha})$ for $x\in \Omega_1$. 
Then, 
$v_{j}\weakly \dashint_{Q^d} v(y)\dd{y} = \bar{\xi}$ in $L^p(\Omega_1;\R^m)$ and 
\begin{align*}
\Acal_{\eps_j} v_{j} = 0\qquad \text{in $\T^{d-1}\times (0,1)$}
\end{align*} 
in view of 
$\xi_1-\xi_2\in \ker \Abb(n)=\ker\Abb_{\eps_j}(e_1)$ by~\eqref{lem:counterexample2} and Lemma~\ref{lem:Afree_jumps}.
In addition, due to the definition of $v_{j}$ one has for all $x\in \Omega_1$ that
\begin{align*}
f\Bigl(\frac{x'}{\eps^\alpha_j}, v_j(x)\Bigr)=g\Bigl(\frac{x'}{\eps^\alpha_j}, v_{j}(x)\Bigr)
=0.
\end{align*}
Thus, 
\begin{align*}
F_0^\sharp(u_0; \omega \times (0,1/2)) = F_0^\sharp(\bar{\xi}; \omega\times (0,1/2)) 
\leq \liminf_{j\to \infty} F_{\eps_j}^\sharp(v_{j};\omega\times(0,1/2))=0,
\end{align*}
which finishes the proof of this step.

\textit{Step~2: $F_0^\sharp(u_0; \omega\times (1/2,1))=0$.} 
The proof is in complete analogy to Step~1, just replace 
$\xi_1$ and $\xi_2$ with $\sigma_1$ and $\sigma_2$, respectively. 

\textit{Step~3: $F_0^\sharp(u_0;\Omega_1)>0$.} Let $\alpha<1$ and $\eps_j^\alpha=\frac{1}{j}$ for all $j\in \N$.
Suppose by contradiction that $F_0^\sharp(u_0;\Omega_1)=0$. Then, for every $\delta>0$ there 
exists a sequence $(u_j^\delta)_j\subset L^p(\Omega_1;\R^m)$ 
with $u_j^\delta \weakly u_0$ in $L^p(\Omega_1;\R^m)$ and 
\begin{align*}
\liminf_{j\to \infty} F_{\eps_j}^\sharp(u_j^\delta; \Omega_1)<\delta.
\end{align*}
Since $f$ is coercive (compare~\eqref{coercivity_g}), $(u_j^\delta)_j$ 
is equibounded in $L^p(\Omega_1;\R^m)$, so that by a diagonalization argument one finds a sequence $(u_j)$ such that $u_j\weakly u_0$ in $L^p(\Omega_1;\R^m)$ and \begin{align*}\lim_{j\to \infty}F_{\eps_j}^\sharp(u_j;\Omega_1)=0.\end{align*}
In particular, for $j$ sufficiently large, we have that
$\Acal_{\eps_j} u_j=0$ in $\T^{d-1}\times (0,1)$. In the following let $q\in (1,p)$ be fixed.
 
To derive a contradiction, we show that for every sequence 
$(v_j)\subset L^p(\Omega_1;\R^m)$  
with $v_j\weakly u_0$ in $L^p(\Omega_1; \R^m)$ and 
\begin{align}\label{IA}
\lim_{j\to \infty} \int_{\Omega_1} f\Bigl(jx', v_j(x)\Bigr)\dd{x}=0,
\end{align}
there is a subsequence of $(v_j)$ (not relabeled) satisfying
\begin{align}\label{toprove}
\norm{\Acal_{\eps_j}v_j}_{W^{-1,q}(\T^{d-1}\times (0,1);\R^l)}
\geq c_q>0
\end{align}
for all $j\in \N$ with a constant $c_q>0$.

\textit{Step~3a: Construction of fast-oscillating functions.} 
We define for $j\in\N$,
\begin{align*}
v_j^\sharp(x)=\frac{1}{j^{d-1}}\sum_{i\in \N_0^{d-1}, \,\abs{i}<j} v_j\Bigl(\frac{i}{j}+x',x_d\Bigr)\qquad x\in \Omega_1=Q^d,
\end{align*}
where $\abs{i}:=\sum_{l=1}^{d-1} i_l$ for $i\in \N_0^{d-1}$.
By definition, $v_j^\sharp$ is $\frac{1}{j}Q^{d-1}$-periodic in the $x'$-variable, and we identify $v_j^\sharp$ with its periodic extension to $\R^d$.
In view of $v_j\weakly u_0$ in $L^p(\Omega_1;\R^m)$ with $u_0$ constant in $x'$, one finds that
(upon selection of a not relabeled subsequence) also
\begin{align}\label{convergence_vknatural}
v_j^\sharp \weakly u_0\qquad \text{in $L^p(\Omega_1;\R^m)$.}
\end{align}
To see this, we observe that $(v_j^\sharp)$ is uniformly bounded in $L^p(\Omega_1;\R^m)$ and we derive for an arbitrary cube $Q\subset\R^d$ that
\begin{align*}
\dashint_Q v_j^\sharp -u_0 \dd{x}&=\frac{1}{j^{d-1}}\sum_{i\in \N_0^{d-1}, \,\abs{i}<j}\dashint_Q  (v_j-u_0)\Bigl(\frac{i}{j}+x', x_d\Bigr)\dd{x}
\\ &\leq \dashint_Q (v_j-u_0)\Bigl(\frac{\hat{i}_j}{j}+x', x_d\Bigr)\dd{x}  
=\frac{1}{\abs{Q}} \int_{\R^d} \chi_{Q+\frac{1}{j}(\hat{i}_j, 0)}(y) (v_j-u_0)(y)\dd{y},
\end{align*}
where $\hat{i}_j=\max\{ \dashint_Q  (v_j-u_0)\bigl(\frac{i}{j}+x', x_d\bigr)\dd{x} : i\in \N_0^{d-1}, \abs{i} < j\}$. Since $|\hat{i}_j|/j<1$ for all $j\in \N$ we find that up to a subsequence $\hat{i}_j/j\to \hat{a}$ in $\R^{d-1}$ with $\hat{a}\in \R^{d-1}$. Consequently,
$\dashint_Q v_j^\sharp -u_0 \dd{x}\to 0$ as $j\to \infty$, in view of $v_j\weakly u_0$ in $L^p_{loc}(\R^d;\R^m)$ and
$\chi_{Q+\frac{1}{j}(\hat{i}_j,0)}\to \chi_{Q+(\hat{a}, 0)}$ in $L^{p'}(\R^d;\R^m)$.

Moreover, exploiting that $v_j^\sharp$ is a convex combination of translations of $u_j$ 
together with the convexity of $f$ in the second variable and the periodicity of $f$ in the first variable entails
\begin{align}\label{convergence_vnatural}
\int_{\Omega_1} f\bigl(jx', v_j^\sharp(x)\bigr)\dd{x} 
&\leq \frac{1}{j^{d-1}}\sum_{i\in \N_0^{d-1}, \,\abs{i}<j}\int_{\Omega_1} f\Bigl(jx', v_j\Bigl(\frac{i}{j}+x', x_d\Bigr)\Bigr)\dd{x}\nonumber \\ &= 
\int_{\Omega_1} f\bigl(jx', v_j(x)\bigr)\dd{x} \to 0
\end{align}
as $j\to \infty$. 
Here we used~\eqref{IA}.
Besides, from the convexity of the norm and the periodicity of $v_j$ in the first variable we conclude that for all $j\in \N$,
\begin{align}\label{Aeps}
\norm{\Acal_{\eps_j}v_j^\sharp}_{W^{-1,p}(\T^{d-1}\times (0,1);\R^l)} 
&\leq \frac{1}{j^{d-1}} \sum_{j\in \N_0^{d-1},\,\abs{i}<j} \normB{\Acal_{\eps_j} v_j\Bigl(\frac{i}{j}+\frarg, \frarg\Bigr)}_{W^{-1,p}(\T^{d-1}\times(0,1);\R^l)}\\ &= 
\norm{\Acal_{\eps_j} v_j}_{W^{-1,p}(\T^{d-1}\times(0,1);\R^l)}.\nonumber
\end{align}

\textit{Step~3b: Rescaled functions.}
We define for $x\in \Omega_1$,
\begin{align*}
\tilde{v}_j(x)=v_j^\sharp\Bigl(\frac{x'}{j},x_d\Bigr).
\end{align*}
Then, $\tilde{v}_j$ is $Q^{d-1}$-periodic for every $j\in \N$
and $(\tilde{v}_j)$ is a bounded sequence in $L^p(\Omega_1;\R^m)$, which (up to a not relabeled subsequence) generates a 
Young measure $(\tilde{\nu}_x)_{x\in \Omega_1}$.

In particular, 
\begin{align}\label{weak_convergence_vkhat}
\tilde{v}_j\weakly \int_{\R^m} a\dd{\tilde{\nu}_x}(a)=:\tilde{v}_0  \qquad \text{in $L^p(\Omega_1;\R^m)$.}
\end{align}
Accounting for \eqref{weak_convergence_vkhat},~\eqref{convergence_vknatural}, and the definition of $\tilde{v}_j$ yields that
\begin{align}\label{weaklimit_v0hat}
 \int_{Q^{d-1}}\tilde{v}_0 \dd{x'} =u_0.
\end{align}
The fundamental theorem on Young measures in conjunction with \eqref{convergence_vnatural} and a change of variables implies
\begin{align}\label{YM0}
&\int_{\Omega_1} \int_{\R^m} f(x', a) \dd{\tilde{\nu}_x}(a)\dd{x} \leq \liminf_{j\to \infty} \int_{\Omega_1}f(x', \tilde{v}_j (x))\dd{x}
= \liminf_{j\to \infty} \int_{\Omega_{1}} f\Bigl(x', v_j^\sharp\Bigl(\frac{x'}{j},x_d\Bigr)\Bigr)\dd{x}\nonumber\\
&\qquad = \liminf_{j\to \infty} j^{d-1}\int_{\frac{1}{j}Q^{d-1}\times (0,1)} f(j x', v_j^\sharp(x))\dd{x} =\lim_{j\to \infty}\int_{\Omega_1} f(j x', v_j^\sharp(x))\dd{x}= 0.
\end{align}
Here again, we exploited the $Q^{d-1}$-periodicity of $f$ in the first variable, as well as the $\frac{1}{j}Q^{d-1}$-periodicity of $v_j^\sharp$.

Moreover, we can prove that
\begin{align}\label{Aeps2}
\normb{\Acal_{\eps_j} v^\sharp_j}_{W^{-1,q}(\T^{d-1}\times (0,1);\R^l)}\geq
\normb{\Acal_{\eps^{1-\alpha}_j} \tilde{v}_j}_{W^{-1,q}(\T^{d-1}\times (0,1);\R^l)}
\end{align}
for all $j\in \N$.
This follows from the a suitable parameter transformation and the definition of the $W^{-1,q}$-norm.

Precisely, let $W_0^{1,q'}(\T^{d-1}\times (0,1);\R^m)$ denote the closure of
the set of $C^{\infty}(\T^d;\R^m)$-functions with zero boundary conditions on $Q^{d-1}\times \{0\}$ and $Q^{d-1}\times \{1\}$ regarding the norm 
$\norm{\frarg}_{W_0^{1,q'}(Q^d;\R^m)}=\norm{\nabla \frarg}_{L^{q'}(Q^d;\R^{m\times d})}$. With this definition, for $j\in \N$ let 
\begin{align*}
B_{1/j} & = \Bigl\{\varphi\in W^{1,q'}_0(\T^{d-1}\times (0,1);\R^m): \norm{\nabla \varphi}_{L^{q'}(Q^d;\R^{m\times d})}\leq 1/j\Bigr\}
\end{align*}
stand closed ball of radius $1/j$ in $W^{1,q'}_0(\T^{d-1}\times (0,1);\R^m)$,
and define
\begin{align*}
B_j^\sharp &=  \Bigl\{\varphi\in B_1: \text{$\varphi$ is $\frac{1}{j}Q^{d-1}$-periodic}\Bigr\} \subset B_1,
\end{align*}
where $B_1$ is the closed unit ball.
Straightforward computation based on the change of variables $y=(jx', x_d)$ yields that for all $j\in \N$,
\begin{align*}
j\tilde{\varphi}_j\in B_1 \qquad \Longrightarrow \qquad \tilde{\varphi}_j\in B_{1/j}\qquad  \Longrightarrow \qquad \varphi_j^\sharp \in B_j^\sharp,
\end{align*}
where $\tilde{\varphi}_j(y)=\varphi_j^\sharp(x)$ and thus, $\partial^x_i\varphi_j^\sharp(x)=j\partial_i^y\tilde{\varphi}_j(y)$ for $i=1, \ldots, d-1$.
Then, in view of
 \begin{align*}
&(\Acal_{\eps_j}^x)^T \varphi_j^\sharp(x) = (\Acal_{\eps_j}^x)^T\tilde{\varphi}_j(jx',x_d)\\ 
&\qquad = j \sum_{k=1}^{d-1} (A^{(k)})^T\partial_j^y \tilde{\varphi}_j(y) + \frac{1}{\eps_j}(A^{(d)})^T\partial_d^y\tilde{\varphi}_j(y)
=j\Acal_{\eps_j^{1-\alpha}}^y \tilde{\varphi}_j(y)\end{align*}
(recalling that $\eps_j^\alpha=1/j$), one obtains that
\begin{align*}
&\norm{\Acal_{\eps_j}v_j^\sharp}_{W^{-1,q}(\T^{d-1}\times (0,1);\R^l)}  =
\sup_{\varphi\in B_1}  \absB{\int_{Q^d} v_j^\sharp(x) \cdot \Acal_{\eps_j}^T\varphi(x)\dd{x}}\nonumber\\
& \qquad \geq \sup_{\varphi_j^\sharp \in B_j^\sharp}  \absB{\int_{Q^d} v_j^\sharp(x) \cdot \Acal_{\eps_j}^T\varphi_j^\sharp(x)\dd{x}}
\geq \sup_{j\tilde{\varphi}_j\in B_1} j \absB{\int_{Q^d} \tilde{v}_j(y) \cdot \Acal_{\eps_j^{1-\alpha}}^T\tilde{\varphi}_j(y)\dd{y}} \\
&\qquad = \sup_{\varphi \in B_1} \absB{\int_{Q^d} \tilde{v}_j(y) \cdot \Acal_{\eps_j^{1-\alpha}}^T\varphi(y)\dd{y}} = \norm{\Acal_{\eps_j^{1-\alpha}}\tilde{v}_j}_{W^{-1,q}(\T^{d-1}\times (0,1);\R^l)}\nonumber
\end{align*} for $j\in \N$. 

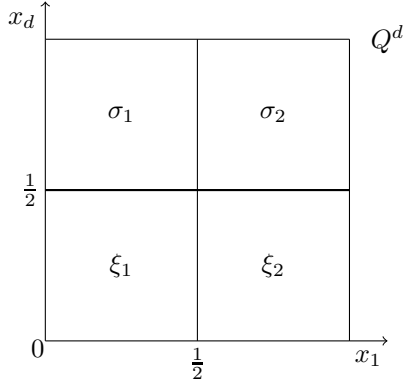
\begin{figure}[t]
\begin{tikzpicture}
\draw (0,0) -- (4,0);
\draw (0,0) --(0,-4);
\draw (4,0)--(4,-4);
\draw (0,-4)--(4,-4);
\draw[thick] (0,-2)--(4,-2);
\draw (2, 0)--(2, -4);
\draw[->] (4,-4) --node[below]{$x_1$} (4.5, -4);
\draw[->] (0,0) --node[left]{$x_d$} (0, 0.5);
\node at (1, -1) {$\sigma_1$};
\node at (3,-1) {$\sigma_2$};
\node at (1, -3) {$\xi_1$};
\node at (3,-3) {$\xi_2$};
\node at (-0.1,-4.1) {$0$};
\node at (-0.2, -2) {$\frac{1}{2}$};
\node at (2, -4.3) {$\frac{1}{2}$};
\node at (4.5,0) {$Q^d$};
\end{tikzpicture}
\caption{\label{fig:vtilde0}Visualization of $\tilde{v}_0$.}
\end{figure}

\textit{Step~3c: Characterization of the Young measure $\tilde{\nu}$.} 
In this step, we prove that for almost all $x\in \Omega_1$,
\begin{align}\label{YM_representation}
\tilde{\nu}_x=\delta_{\tilde{v}_0(x)}.
\end{align}
In doing so, we will derive the representation 
\begin{align}\label{def:vtilde0}
\tilde{v}_0(x)= \left\{\begin{array}{cl}
\sigma_1 & \text{if }  x_1\leq1/2,\, x_d>1/2,\\
\sigma_2 & \text{if }  x_1>1/2,\, x_d>1/2,\\
\xi_1 & \text{if } x_1\leq1/2,\, x_d\leq 1/2,\\
\xi_2 & \text{if }  x_1>1/2,\, x_d\leq 1/2,
\end{array}\right. \qquad x\in \Omega_1,
\end{align} 
meaning that $\tilde{v}_0$ coincides in $\Omega_1=Q^d$ with a piecewise 
constant function, see Fig~\ref{fig:vtilde0}. In the sequel, we will focus on the case $x_d> 1/2$, the arguments for $x_d<1/2$ are analogous.

From \eqref{YM0} we know that $\tilde{\nu}_x$ is supported in $\{a\in \R^m: f(x', a) =g^{\rm c}(x',a)=0\}$ 
for $x\in \Omega_1$. By the definition of
$g$, this means that 
\begin{align*}
\supp \tilde{\nu}_x\subset \begin{cases}\{\lambda \xi_1 + (1-\lambda)\sigma_1: \lambda\in [0,1]\} & \text{if $x_1\leq 1/2$,}\\
 \{\lambda \xi_2 + (1-\lambda)\sigma_2 : \lambda\in [0,1]\} & \text{if $x_1>1/2$.}
 \end{cases}
\end{align*} 
For almost every $x\in \Omega_1$ there exists a probability measure 
on $[0,1]$ named $\mu_{x}^>$ or $\mu_{x}^\leq $ such that
\begin{align*}
\tilde{\nu}_x(E)=\mu_{x}^{\leq}(\{\lambda\in [0,1]:\lambda\xi_1+(1-\lambda)\sigma_1\in E\}) \qquad \text{if $x_1\leq 1/2,$}\\
\tilde{\nu}_x(E)=\mu_{x}^{>}(\{\lambda\in [0,1]:\lambda\xi_2+(1-\lambda)\sigma_2\in E\}) \qquad \text{if $x_1>1/2,$}
\end{align*}for all $E\subset \R^m$.

Recalling that $\omega=Q^{d-1}$, we compute
\begin{align}\label{YM1}
&\int_{\omega}\int_{\R^m} a\dd{\nu_x}(a)\dd{x'} = \bar{\sigma}
+(\xi_1-\sigma_1) \int_{0}^{1/2} \int_0^1\lambda\dd{\mu_{x}^\leq}(\lambda)\dd{x_1}\nonumber\\
&\qquad\qquad\qquad\hspace{3cm}+ (\xi_2-\sigma_2) \int_{1/2}^1 \int_0^1\lambda\dd{\mu_{x}^>}(\lambda)\dd{x_1}.
\end{align}
On the other hand, in view of \eqref{weaklimit_v0hat}, we have for $x\in \Omega_1$ that
\begin{align}\label{YM2}
\int_{\omega} \int_{\R^m} a \dd{\nu_{x}}(a)\dd{x'} &= \int_{Q^{d-1}} \tilde{v}_0(x', x_d)\dd{x'} = u_0(x)=
\begin{cases} \bar{\sigma}&\text{if $x_d>1/2$,} \\ \bar{\xi} &\text{if $x_d\leq 1/2$.}\end{cases}
\end{align}
For the moment, consider a fixed $x_d\in (1/2,1)$, so that we infer from \eqref{YM1}, \eqref{YM2}, and the
linear independence of  $(\xi_2-\sigma_2)$ and $(\xi_1-\sigma_1)$ by~\eqref{lem:counterexample5} in Lemma~\ref{lem:general_counterexample} that
\begin{align*}
\int_{0}^{1/2} \int_0^1\lambda\dd{\mu_{x}^\leq}(\lambda)\dd{x_1}=0\qquad \text{and}\qquad
\int_{1/2}^{1} \int_0^1\lambda \dd{\mu_{x}^>}(\lambda)\dd{x_1}=0.
\end{align*}
This shows that for almost all $x\in \Omega_1$ with $x_d>1/2$,
\begin{align*}
\tilde{\nu}_x=\begin{cases}\delta_{\sigma_1} &\text{if $x_1\leq 1/2$,} \\
\delta_{\sigma_2}& \text{if $x_1>1/2$.}\end{cases}
\end{align*}

\textit{Step~3d: Conclusion.}
As a consequence of \eqref{YM_representation}, $\tilde{v}_j\to \tilde{v}_0$ in measure in $\Omega_1$, and therefore 
\begin{align*}
\tilde{v}_j\to \tilde{v}_0\qquad \text{in $L^q(\Omega_1;\R^m)$ for $j\to \infty$.}
\end{align*}

{\it Case A: $\alpha=1$.} In view of~\eqref{Aeps},~\eqref{Aeps2}, and the lower semicontinuity of the norm, we obtain that
\begin{align*}
\liminf_{j\to \infty}\norm{\Acal_{\eps_j}v_j}_{W^{-1, q}(\T^{d-1}\times (0,1);\R^l)}& \geq \liminf_{j\to \infty}\norm{\Acal \tilde{v}_j}_{W^{-1,q}(\T^{d-1}\times (0,1);\R^l)}\nonumber \\ &\geq \norm{\Acal \tilde{v}_0}_{W^{-1, q}(\T^{d-1}\times (0,1);\R^l)} >c_q>0.
\end{align*}
For the last step, one needs to exploit the fact that, by construction, $\tilde{v}_0$ is not $\Acal$-free as a consequence of~\eqref{lem:counterexample4}.

{\it Case B: $\alpha<1$.}
By the continuity of first-order derivatives as linear operators from $L^q$ to $W^{-1,q}$, we get
\begin{align}\label{A0neq0}
\norm{\Acal_0 \tilde{v}_j}_{W^{-1,q}(\T^{d-1}\times (0,1);\R^l)} \to 
\norm{\Acal_0 \tilde{v}_0}_{W^{-1,q}(\T^{d-1}\times (0,1);\R^l)}\neq 0.
\end{align}
Notice that $\Acal_0 \tilde{u}_0 \neq 0$ due to Lemma~\ref{lem:Afree_jumps} and $\xi_1-\sigma_1\notin \ker \Abb_0(e_d)$
or $\xi_2-\sigma_2\notin \ker \Abb_0(e_d)$ according to~\eqref{lem:counterexample4} in Lemma~\ref{lem:general_counterexample}. 

To conclude the proof of Step~3, we show that there is a subsequence (not relabeled) and a constant $c_q>0$ such that 
\begin{align*}
\norm{\Acal_{\eps_j^{1-\alpha}}\tilde{v}_j}_{W^{-1,q}(\T^{d-1}\times (0,1);\R^l)} \geq c_q
\end{align*}
for all $j\in \N$, which in view of~\eqref{Aeps} and~\eqref{Aeps2} finishes the proof of~\eqref{toprove}.
In fact, assuming that $\lim_{j\to \infty}\norm{\Acal_{\eps_j^{1-\alpha}}\tilde{v}_j}_{W^{-1,q}(\T^{d-1}\times (0,1);\R^l)} =0$ implies
\begin{align*}
\lim_{j\to \infty} \norm{\Acal_0\tilde{v}_j}_{W^{-1,q}(\T^{d-1}\times (0,1);\R^m)}=0
\end{align*} 
by Lemma~\ref{lem:convergence_AepsA0} applied with $\beta=1-\alpha>0$, which is in contradiction to~\eqref{A0neq0}.
\end{proof}

The next two technical lemmata were used in the proof of Proposition~\ref{prop:nonlocal_alpha<1}.
\begin{lemma}[$\Acal$-free jumps]\label{lem:Afree_jumps}
Let $\Acal$ be a first-order partial differential operator with constant coefficients as in Section~\ref{sec:formulation_Afree}.
Further, let $n\in \R^d$ and $\xi, \sigma\in \R^m$, and consider the jump function 
$v:\R^d\to\R^m$ given by
\begin{align*}
v(y)= \begin{cases} 
\xi & \text{ if $y\cdot n\leq 0$,}\\ 
\sigma & \text{ if $y\cdot n > 0$,}
\end{cases} \qquad y\in \R^d.
\end{align*}
Then, $\Acal v=0$ in $\R^d$ if and only if $\xi-\sigma \in \ker \Abb(n)$.
\end{lemma}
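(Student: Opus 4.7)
The plan is to compute $\Acal v$ explicitly as a distribution on $\R^d$, using that the jump of $v$ across the hyperplane $\{y\cdot n=0\}$ produces a surface measure there with an $\R^l$-valued coefficient; its vanishing will be exactly the claimed kernel condition. First I dispose of the degenerate case $n=0$: then $y\cdot n\leq 0$ for every $y$, so $v\equiv \xi$ and $\Acal v=0$ trivially, while $\Abb(0)=0$ gives $\ker\Abb(0)=\R^m\ni\xi-\sigma$; both sides of the equivalence hold. From now on I assume $n\neq 0$ and write $v=\xi+(\sigma-\xi)\chi_H$ with the open half-space $H:=\{y\in\R^d : y\cdot n>0\}$.

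For the main computation, let $\varphi\in C_c^\infty(\R^d;\R^l)$ be a test function. By the distributional definition of $\Acal$, and since the constant part $\xi$ contributes nothing (by compact support of $\varphi$),
\[
\dpr{\Acal v,\varphi}=-\sum_{k=1}^d A^{(k)}(\sigma-\xi)\cdot \int_H \partial_k\varphi\,\dd{y}.
\]
The divergence theorem on the half-space $H$, whose outward unit normal is $-n/\abs{n}$, gives
\[
\int_H \partial_k\varphi\,\dd{y}=-\frac{n_k}{\abs{n}}\int_{\{y\cdot n=0\}}\varphi\,\dd{\Hcal^{d-1}}.
\]
Summing over $k$ and using $\Abb(n)=\sum_k n_k A^{(k)}$ then yields
\[
\dpr{\Acal v,\varphi}=\frac{1}{\abs{n}}\,\Abb(n)(\sigma-\xi)\cdot \int_{\{y\cdot n=0\}}\varphi\,\dd{\Hcal^{d-1}},
\]
so $\Acal v$ is the surface measure on $\{y\cdot n=0\}$ weighted by the constant vector $\frac{1}{\abs{n}}\Abb(n)(\sigma-\xi)\in\R^l$.

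To finish, I just observe that the traces on the hyperplane $\{y\cdot n=0\}$ of test functions $\varphi\in C_c^\infty(\R^d;\R^l)$ are rich enough to realize any direction in $\R^l$ for the integral on the right-hand side, so the displayed distribution vanishes if and only if $\Abb(n)(\sigma-\xi)=0$, which (as $\ker\Abb(n)$ is a linear subspace) is the same as $\xi-\sigma\in\ker\Abb(n)$. There is no real obstacle here; the only point requiring a moment of care is keeping the orientation of the outward normal straight in the divergence theorem.
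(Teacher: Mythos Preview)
Your proof is correct and follows essentially the same approach as the paper: both compute $\Acal v$ distributionally via the Gauss--Green/divergence theorem, identifying it as the surface measure on $\{y\cdot n=0\}$ weighted by $\Abb(n)(\xi-\sigma)$ (up to sign and the harmless scalar $1/\abs{n}$). Your version is slightly more explicit in treating the degenerate case $n=0$ and in tracking the normalization, but the argument is the same.
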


\begin{proof}
From \cite[Remark~2.5]{KR14}, which is essentially a generalization of the 
classical Gau{\ss}-Green theorem, one infers for all  $\varphi\in C_0^\infty(\R^d;\R^l)$ that
\begin{align*}
\int_{\R^d}  v\cdot \Acal^T\varphi \dd{y}&=\int_{\{n\cdot y >0\}} v\cdot \Acal^T\varphi \dd{y} +
 \int_{\{n\cdot y <0\}}  v\cdot\Acal^T \varphi \dd{y} \\ & = \int_{\{n\cdot y=0\}} \Abb(n)\sigma\cdot \varphi \dd{\Hcal^{d-1}} 
 + \int_{\{n\cdot y=0\}} \Abb(-n) \xi \cdot \varphi\dd{\Hcal^{d-1}}  \\ 
 &= \int_{\{n\cdot y=0\}} \Abb(n) (\xi-\sigma)\cdot \varphi \dd{\Hcal^{d-1}},
\end{align*}
where $\Hcal^{d-1}$ is the $(d-1)$-dimensional Hausdorff measure.
\end{proof}

\begin{lemma}\label{lem:convergence_AepsA0}
Let $\eps_j\todown 0$, $\beta>0$ and $q>1$. If $(v_j)\subset L^q(Q^d;\R^m)$ is a bounded sequence and
 $\Acal_{\eps_j^\beta}v_j\to 0$ in $W^{-1,q}(\T^{d-1}\times (0,1);\R^l)$,
 then 
 \begin{align*}
 \Acal_0 v_j\to 0 \qquad\text{in $W^{-1,q}(\T^{d-1}\times (0,1);\R^l)$.}
 \end{align*}
\end{lemma}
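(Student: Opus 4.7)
The plan is to exploit the block structure of $\Acal$ given by assumption (A2), which makes the result essentially an algebraic identity. Under (A2), the decomposition \eqref{splittingA} yields
\[
    \Acal_{\eps_j^\beta} = \begin{bmatrix}\, \Acal'_+ + \tfrac{1}{\eps_j^\beta} A^{(d)}_+\partial_d \,\\\hline \Acal'_-\end{bmatrix}, \qquad
    \Acal_0 = \begin{bmatrix}\, A^{(d)}_+\partial_d \,\\\hline \Acal'_-\end{bmatrix},
\]
so $\Acal_{\eps_j^\beta}$ and $\Acal_0$ literally share the same lower block $\Acal'_-$, while their upper blocks differ only by the extra summand $\Acal'_+ v_j$ and a rescaling factor.

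First, I would read off the lower block: the assumption $\Acal_{\eps_j^\beta}v_j\to 0$ in $W^{-1,q}(\T^{d-1}\times(0,1);\R^l)$ projected onto its last $l-r$ components gives $\Acal'_- v_j\to 0$ in $W^{-1,q}(\T^{d-1}\times(0,1);\R^{l-r})$, which is exactly the lower block of $\Acal_0 v_j$.

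Next, from the upper block of the assumption I would get
\[
    \Acal'_+ v_j + \tfrac{1}{\eps_j^\beta} A^{(d)}_+\partial_d v_j \to 0 \quad\text{in } W^{-1,q}(\T^{d-1}\times(0,1);\R^r).
\]
Since every first-order constant-coefficient differential operator is bounded as a map $L^q\to W^{-1,q}$, the boundedness of $(v_j)$ in $L^q(Q^d;\R^m)$ implies that $(\Acal'_+ v_j)$ is bounded in $W^{-1,q}$. Rearranging shows that $\bigl(\tfrac{1}{\eps_j^\beta} A^{(d)}_+\partial_d v_j\bigr)$ is bounded in $W^{-1,q}$ as well; multiplying by $\eps_j^\beta\to 0$ (and using $\beta>0$) yields $A^{(d)}_+\partial_d v_j\to 0$ in $W^{-1,q}$, which is the upper block of $\Acal_0 v_j$. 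Concatenating both blocks gives $\Acal_0 v_j\to 0$ in $W^{-1,q}(\T^{d-1}\times(0,1);\R^l)$, as desired.

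There is no real obstacle here: the whole point of introducing assumption (A2) and the operator $\Acal_0$ via \eqref{intro:Acal0} is precisely that the singular factor $\tfrac{1}{\eps^\beta}$ appears only in the upper block, so it can be absorbed by the rescaling argument above; the lower block is already insensitive to $\eps$. The only input beyond this algebraic observation is the standard continuity of first-order derivatives from $L^q$ into $W^{-1,q}$.
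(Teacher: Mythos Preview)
Your proof is correct and follows essentially the same approach as the paper: both exploit the block splitting \eqref{splittingA} to observe that the lower block $\Acal'_-$ is shared by $\Acal_{\eps_j^\beta}$ and $\Acal_0$, and then multiply the upper block relation by $\eps_j^\beta$ together with the boundedness of $\Acal'_+v_j$ in $W^{-1,q}$ to kill the singular term. The only cosmetic difference is that the paper writes the upper block directly as $A^{(d)}_+\partial_d v_j=\eps_j^\beta(\Acal_{\eps_j^\beta})_+v_j-\eps_j^\beta\Acal'_+v_j$, whereas you first argue boundedness of $\tfrac{1}{\eps_j^\beta}A^{(d)}_+\partial_d v_j$ and then multiply by $\eps_j^\beta$.
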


\begin{proof}
As in~\eqref{splittingA} we split the operators $\Acal_{\eps_j^\beta}$ and $\Acal_0$ into
\begin{align*}
\Acal_{\eps_j^\beta}=\left(\frac{ (\Acal_{\eps_j^\beta})_+}{(\Acal_{\eps_j^\beta})_-}\right)\qquad \text{and}\qquad \Acal_0=\left(\frac{ (\Acal_0)_+}{(\Acal_0)_-}\right).
\end{align*}
Then, 
\begin{align*}
(\Acal_0)_+v_j=A_+^{(d)}\partial_d v_j = \eps_j^\beta (\Acal_{\eps_j^\beta})_+ v_j -\eps_j^\beta \Acal_+'v_j\to 0\qquad \text{in $W^{-1,q}(\T^{d-1}\times (0,1);\R^r)$}
\end{align*}  
as $j\to \infty$. In the first term, the assumption was used, while the convergence of the second term follows from $\beta>0$ and the fact that
$\norm{\Acal'_+ v_j}_{W^{-1,q}(\T^{d-1}\times (0,1);\R^r)} \leq c\norm{v_j}_{L^q(Q^d;\R^m)}$ is uniformly bounded.
On the other hand, for $(\Acal_0)_-$ one finds that
\begin{align*}
(\Acal_0)_- v_j=\Acal'_- v_j =( \Acal_{\eps_j^\beta})_-v_j \to 0 \qquad \text{in $W^{-1,q}(\T^{d-1}\times (0,1);\R^{l-r})$}
\end{align*}
for $j\to \infty$.
\end{proof}


\section{Homogenization of the thin-film limit}\label{sec:epsdelta}
This section is concerned with the double limit
\begin{align}\label{doublelimit_epsdelta}
\Gamma\text{-}\lim_{\delta\to 0}[\Gamma\text{-}\lim_{\eps\to 0} F_{\eps, \delta}] =\Gamma\text{-}\lim_{\delta\to 0}F_\delta.
\end{align} 
As in the corresponding simultaneous case in Section~\ref{sec:alpha<1}, the goal is to find a suitable integrand $f$ such that the expression in~\eqref{doublelimit_epsdelta} is nonlocal.

Even though the limit functional $F_\delta$ for fixed $\delta>0$ is known to be local for convex integrands with no dependence on $z_d$, precisely,
\begin{align*}
F_\delta(u) = 
\begin{cases} 
\int_{\Omega_1} f(\frac{y'}{\delta}, u(y))\dd{y} & \text{if $\Acal_0 u=0$ in $\Omega_1$,} \\
\infty & \text{else,}
\end{cases}\qquad u\in L^p(\Omega_1;\R^m),
\end{align*}
by Theorem~\ref{theo:dimred},
it is impossible to apply the homogenization result of Theorem~\ref{theo:hom} to $F_\delta$, because this would require the differential constraint to be conveyed by a constant-rank operator, which $\Acal_0$ in general is not (see~\cite[Section~2.7]{KR14}).

Exactly the same construction as in Section~\ref{sec:alpha<1} can be used here to show the nonlocality of the successive limits in~\eqref{doublelimit_epsdelta}. The desired incompatibility is even easier to see.
In view of $\ker \Abb(n)\subset \ker \Abb_0(n)$ for $n=(n',0)\in \Scal^{d-1}$, we find that the quantity $\tilde{v}_0$ defined in~\eqref{def:vtilde0} fails to be $\Acal_0$-free in $\T^{d-1}\times(0,1)$. 
This yields the following result:

\begin{proposition}[Nonlocal character of \boldmath{$F_0^\sharp$}]
Let $\Acal$ and $n$ be as in Lemma~\ref{lem:generalized_vectorquartett}, assuming that $n\neq e_d$ is a 
standard unit vector in $\R^d$, and consider a given sequence {$\delta_j\todown 0$}. Further, let $f:\R^{d-1}\times \R^m \to \R$ be the function defined in \eqref{def:f}, 
and $\Omega_1=\omega\times(0,1)=Q^d$. For an open set $D\subset \Omega_1$ the $\Gamma$-$\liminf$ is denoted by
\begin{align*}
F_0^\sharp(u;D)&=\inf \{\liminf_{j\to \infty} \textstyle \int_D f (\frac{x'}{\delta_j}, u_j(x))\dd{x}: (u_j)\subset L^p(\Omega_1;\R^m), \\ & \qquad\qquad\qquad \Acal_0 u_j=0 \text{ in $\T^{d-1}\times (0,1)$}, u_j\weakly u \text{ in $L^p(\Omega_1; \R^m)$}\}
\end{align*}
for $u\in L^p(\Omega_1;\R^m)$, and for $x\in \Omega_1$ let
\begin{align*}
u_0(x)=\begin{cases} \bar{\sigma}& \text{if $x_d>1/2$,}\\
\bar{\xi} & \text{if $x_d\leq 1/2$},
\end{cases}
\end{align*} 
where $\bar{\xi}=\frac{1}{2}(\xi_1+\xi_2)$ and $\bar{\sigma}=\frac{1}{2}(\sigma_1+\sigma_2)$.

Then,  
\begin{align*}
F_0^\sharp(u_0; \omega \times (0,1/2)) = 0 = F_0^\sharp(u_0; \omega \times (1/2, 1)).
\end{align*}
If $\delta_j=\frac{1}{j}$ for all $j\in \N$, then
$F_0^\sharp(u_0; \Omega_1)>0$.
\end{proposition}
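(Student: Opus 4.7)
The argument would mirror the proof of Proposition~\ref{prop:nonlocal_alpha<1} but is noticeably shorter because $\Acal_0$ is a fixed operator, so no parameter-dependent constraint needs to be tracked through the rescaling. I would organize the proof around three steps. First, for the vanishing upper bounds $F_0^\sharp(u_0;\omega\times(0,1/2))=F_0^\sharp(u_0;\omega\times(1/2,1))=0$, I would reuse the one-dimensional jump construction $v_j(x):=v(x/\delta_j)$ with $v(y)=\xi_1$ for $y\cdot n\leq 1/2$ and $v(y)=\xi_2$ otherwise (resp.\ $\sigma_1,\sigma_2$ for the upper half), which automatically makes $f(\frac{x'}{\delta_j},v_j)\equiv 0$. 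The only new point is admissibility: by Lemma~\ref{lem:Afree_jumps} applied to the two blocks of $\Acal_0$, $\Acal_0 v_j=0$ reduces to $\xi_1-\xi_2\in\ker\Abb_0(n)$, and since $n=(n',0)$ has vanishing $d$-th component one has $\ker\Abb_0(n)=\ker\Abb'_-(n')\supset\ker\Abb(n)\ni \xi_1-\xi_2$ by~\eqref{lem:counterexample2}.

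For the nonvanishing assertion, I would argue by contradiction and assume $F_0^\sharp(u_0;\Omega_1)=0$, extracting via coercivity~\eqref{coercivity_g} and diagonalization a sequence $u_j\in\ker_{\T^{d-1}\times(0,1)}\Acal_0$ with $u_j\weakly u_0$ in $L^p$ and $\int_{\Omega_1}f(jx',u_j)\dd{x}\to 0$. The averaging $v_j^\sharp(x):=j^{-(d-1)}\sum_{i\in\N_0^{d-1},\abs{i}<j} u_j(i/j+x',x_d)$ and rescaling $\tilde v_j(x):=v_j^\sharp(x'/j,x_d)$ of Steps~3a--3b of Proposition~\ref{prop:nonlocal_alpha<1} carry over verbatim: $v_j^\sharp$ remains $\Acal_0$-free since translations commute with constant-coefficient operators, and a direct chain-rule computation from $\Acal_0 v_j^\sharp=0$ yields $A_+^{(d)}\partial_d\tilde v_j=A_+^{(d)}\partial_d v_j^\sharp=0$ together with $\Acal'_-\tilde v_j=0$ (from $\Acal'_-v_j^\sharp=j\Acal'_-\tilde v_j=0$), i.e.\ $\Acal_0\tilde v_j=0$. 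A change of variables, using $Q^{d-1}$-periodicity of $f(\frarg,v)$, converts the energy bound into $\int_{\Omega_1}f(x',\tilde v_j)\dd{x}\to 0$.

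The Young-measure identification of Step~3c then applies verbatim: the support of $(\tilde\nu_x)$ lies in the segments $[\xi_1,\sigma_1]$ for $x_1\leq 1/2$ and $[\xi_2,\sigma_2]$ for $x_1>1/2$, and combined with $\int_\omega\int a\dd\tilde\nu_x\dd{x'}=u_0$ and the linear independence~\eqref{lem:counterexample5}, this forces $\tilde\nu_x=\delta_{\tilde v_0(x)}$ for the piecewise constant four-quadrant field of~\eqref{def:vtilde0}. Hence $\tilde v_j\to\tilde v_0$ in $L^q(\Omega_1;\R^m)$ for any $q<p$, and by continuity of $\Acal_0\colon L^q\to W^{-1,q}$ one obtains $\Acal_0\tilde v_0=0$ in $\T^{d-1}\times(0,1)$. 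However, Lemma~\ref{lem:Afree_jumps} applied to the jump of $\tilde v_0$ across $\{x_d=1/2\}$ requires both $\xi_1-\sigma_1$ and $\xi_2-\sigma_2$ to lie in $\ker\Abb_0(e_d)$, contradicting~\eqref{lem:counterexample4}. The only real obstacle---the cut-off analysis in $x_d$ against the $\tfrac{1}{\eps}$-term in $\Acal_\eps$ which dominated the simultaneous proof---simply disappears here, so the argument is a streamlined version of its predecessor and the main work is to verify that the averaging/rescaling diagram commutes cleanly with the fixed operator $\Acal_0$.
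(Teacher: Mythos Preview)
Your proposal is correct and matches the paper's own argument, which merely states that the construction of Proposition~\ref{prop:nonlocal_alpha<1} carries over verbatim, the admissibility of the recovery sequences following from $\ker\Abb(n)\subset\ker\Abb_0(n)$ for $n=(n',0)$, and the contradiction from $\tilde v_0$ failing to be $\Acal_0$-free via~\eqref{lem:counterexample4}. Your observation that the rescaling $\tilde v_j(x)=v_j^\sharp(x'/j,x_d)$ preserves $\Acal_0$-freeness directly (because the block structure of $\Acal_0$ separates $\partial_d$ from $\nabla'$) is exactly the simplification the paper alludes to when it says the incompatibility is ``even easier to see'': it replaces Steps~3b and~3d (the norm estimates~\eqref{Aeps},~\eqref{Aeps2} and the case distinction $\alpha=1$ versus $\alpha<1$ via Lemma~\ref{lem:convergence_AepsA0}) by a one-line continuity argument.
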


\begin{remark}[Lower bound]
Recalling~Remark~\ref{rem:upperbound_twoscale} on weak two-scale limits of $\Acal_0$-free fields, we know that~\eqref{doublelimit_epsdelta} is bounded from below by the local expression $\int_{\Omega_1} \fhomzero(u)\dd{x}$, if $u\in \Ucal_{\Acal_0}$.
\end{remark}

\section*{Acknowledgements}
The research of SK was supported by the project CZ01-DE03/2013-2014/DAAD-56269992 (PPP program).
CK acknowledges the support of the Funda\c{c}a\~{o} para a Ci\^{e}ncia e a Tecnologia (Portuguese Foundation for Science and Technology) through the ICTI CMU-Portugal Program in Applied Mathematics and UTA-CMU/MAT/0005/2009, as well as of  ERC-2010-AdG no.~267802 ``Analysis of Multiscale Systems Driven by Functionals''. 

\bibliographystyle{acm}
\bibliography{Afree}

\end{document}